\title{Nonabelian Landau-Ginzburg orbifolds and Calabi-Yau/Landau-Ginzburg correspondence}
\author{Daichi Mukai}
\address{
  Research Institute for Mathematical Sciences, Kyoto University, Kyoto 606-8502, Japan
}
\email{daichi@kurims.kyoto-u.ac.jp}
\begin{document}

\begin{abstract}
  In this paper, we study the bigraded vector space structure of Landau-Ginzburg orbifolds.
  We prove the formula for the generating function of the Hodge numbers of possibly nonabelian
  Landau-Ginzburg orbifolds.
  As an application, we calculate the Hodge numbers for all nondegenerate quintic homogeneous
  polynomials with five variables.
  These results yield an evidence for the Calabi-Yau/Landau-Ginzburg correspondence between
  the Calabi-Yau geometries and the Landau-Ginzburg B-models.
\end{abstract}

\maketitle

\section{Introduction}
  \label{sec:introduction}
  The $\mathcal{N} = 2$ superconformal Landau-Ginzburg model is a $2$-dimensional conformal field theory
  determined by a nondegenerate quasihomogeneous polynomial $W$.
  This model possesses the symmetry represented by the $\mathcal{N} = 2$ superconformal algebra generated by
  a basis $L_{n}$, $J_{n}$, $G_{r}^{\pm}$ and $c$ where $n \in \Z$ and $r$ runs over half-integral or integral
  values.
  The formar is called the Neveu-Schwarz sector and the latter is called the Ramond sector.
  Both of the two sectors satisfy the same canonical (anti)commutation relations.
  Although the difference between the definitions of the two sectors is a set of indices of $G_{r}^{\pm}$,
  the structures of the representation of these sectors are considerably different.

  A primary chiral state is the state $\ket{\phi}$ in the Neveu-Schwarz state space of an $\mathcal{N} = 2$
  superconformal field theory which satisfies the condition
  \begin{equation*}
    G_{n-1/2}^{+} \ket{\phi} = G_{n+1/2}^{-} \ket{\phi} = 0 \quad \text{for}\ n \geq 0.
  \end{equation*}
  Using the anticommutation relation, we deduce for primary chiral states
  \begin{equation*}
    \{G_{-1/2}^{+}, G_{1/2}^{-}\} \ket{\phi} = (2L_{0} - J_{0}) \ket{\phi} = 0.
  \end{equation*}
  Therefore the eigenvalue of $L_{0}$ (the conformal dimension $h$) is half of the eigenvalue of $J_{0}$
  (the charge $q$), i.e. $h = q/2$.

  The chiral ring is the operator algebra of the fields corresponding to the primary chiral states.
  The multiplication of this algebra is given by the usual operator product of chiral fields modulo
  setting the descendant chiral fields to zero.
  For example, the chiral ring associated to a Landau-Ginzburg model is isomorphic to the Jacobi ring of $W$.

  The Landau-Ginzburg orbifold is a $\mathcal{N} = 2$ superconformal field theory obtained by orbifolding
  a Landau-Ginzburg model by a symmetry group $G$ of $W$.
  The subject of the paper is a chiral ring of a Landau-Ginzburg orbifold which we call simply
  a Landau-Ginzburg orbifold.
  This orbifold theory was given by Intriligator and Vafa~\cite{intriligator1990}
  as the vector space which is essentially a direct sum of some Jacobi rings associated to
  the nondegenerate quasihomogeneous polynomials determined by $W$ and $G$.
  However, this definition was lacking in a product structure.
  Recently, Krawitz~\cite{krawitz2010} showed, besides writing down a product structure, that the
  Landau-Ginzburg orbifold has the structure of a Frobenius algebra under some assumptions
  if $W$ is a so-called invertible polynomial and $G$ is a so-called diagonal symmetry group.
  The Frobenius pairing physically corresponds to the correlation function of the conformal field theory and
  hence plays an important role.

  Such a Landau-Ginzburg orbifold constructed from Jacobi rings is nowadays called
  the ``Landau-Ginzburg B-model''.
  Another Landau-Ginburg model, called ``Landau-Ginzburg A-model'', had arisen from the study of
  topological gravity.
  The A-model is also an orbifold theory associated to a nondegenerate quasihomogeneous polynomial
  and its symmetry group.
  Its Frobenius algebra structure was recently studied by Fan, Jarvis, and Ruan~\cite{fan2013}.

  The Landau-Ginzburg mirror symmetry conjecture states that the equality between the A-model
  of a given polynomial $W$ and the symmetry group $G$, and the B-model of an appropriately chosen polynomial
  $\check{W}$ and group $\check{G}$.
  For example, it has been proved on the level of Frobenius algebras when $W$ is an invertible polynomial
  and $G$ is diagonal symmetry group satisfying a certain property~\cite{krawitz2010, francis2012}.
  In addition, if $G$ is a maximal diagonal symmetry group, it can be generalized for any genus~\cite{he2015}.
  However, this conjecture is an open problem for general cases.

  There is another duality which relates the Landau-Ginzburg model and the Calabi-Yau geometry,
  called the Calabi-Yau/Landau-Ginzburg (CY/LG) correspondence.
  Using the Fan-Jarvis-Ruan-Witten (FJRW) theory as the candidate theory for the Landau-Ginzburg A-model,
  Chiodo and Ruan~\cite{chiodo2011} tried to understand this correspondence between the orbifold
  cohomology of the orbifold $X_{W} / \Tilde{G}$ defined by Chen and Ruan~\cite{chen2004} and the FJRW theory.
  Here, $X_{W}$ is a set of solutions of $W$ in the weighted projective space and $\Tilde{G}$ is the image
  of $G$ in $\Aut{(X_{W})}$.
  This attempt has succeeded for the case where $W$ is a nondegenerate quasihomogeneous polynomial
  satisfying the Calabi-Yau condition (see Definition~\ref{def:calabi-yau-condition}) and $G$ is a
  diagonal symmetry group of $W$.

  Let us summarize these conjectural dualities related to Landau-Ginzburg models and its geometric
  counterparts.

  \begin{figure}[h!]
    \centering
    \begin{tikzpicture}
      \node at (4.5,3) {Calabi-Yau side};
      \node at (4.5,-2.5) {Landau-Ginzburg side};
      \node at (0,2) {Geometry of $(X, \Tilde{G})$};
      \draw [<->] (0,1.5) -- (0,0.5) node [midway,right] {\small CY/LG correspondence};
      \node at (0,0) {A-model of $(W, G)$};
      \node at (0,-1) {B-model of $(W, G)$};
      \node at (9,2) {Geometry of $(\check{X}, \check{\Tilde{G}})$};
      \draw [<->] (9,1.5) -- (9,0.5) node [midway,left] {\small CY/LG correspondence};
      \node at (9,0) {A-model of $(\check{W}, \check{G})$};
      \node at (9,-1) {B-model of $(\check{W}, \check{G})$};
      \draw [<->] (2,-0.125) -- (7,-0.875);
      \draw [<->] (2,-0.875) -- (7,-0.125);
      \draw [<->] (2,2) -- (7,2) node [midway,above,align=center] {\small topological mirror symmetry};
      \node [align=center,below] at (4.5, -1) {\small Landau-Ginzburg \\ \small mirror symmetry};
    \end{tikzpicture}
  \end{figure}

  Although the Landau-Ginzburg model could be defined with an arbitrary finite symmetry group,
  most of the precedent researches were focused on the diagonal ones.
  The main theme of the paper is the vector space structure of the Landau-Ginzburg B-model
  with a possibly nonabelian symmetry group.
  This vector space is equipped with the bigrading which reflects the left-right charge of the corresponding
  conformal field theory.
  Defining the Hodge number as the dimension of the homogeneous subspace of this bigrading,
  we see that this Hodge numbers satisfy similar relations with the ordinary Hodge numbers of a compact
  Kähler manifold such as Hodge symmetry and Serre duality.
  Note that an all genus quantum theory of Landau-Ginzburg A-model with a possibly nonabelian group has been
  obtained as a special case of gauged linear sigma models~\cite{fan2015}.

  Recently Oguiso and Yu~\cite{oguiso2015} completely classified automorphism groups of nonsingular quintic
  threefolds.
  Using this result, Yu~\cite{Yu_2016} calculated the orbifold Hodge numbers of quintic threefold orbifolds
  for all automorphism groups which fix a nowhere vanishing holomorphic $3$-form on the manifold.
  This list of the groups by Yu give us a partial classification of the symmetry group of nondegenerate
  quintic polynomial.
  Computing the Hodge numbers using the groups in this list, we obtain the main result
  Theorem~\ref{thm:main-theorem} in the paper.
  This gives credence to the CY/LG correspondence for the nondegenerate quintic polynomial case.
  Note that this CY/LG correspondence is another side of CY/LG correspondence proved
  by Chiodo and Ruan since we discuss the Calabi-Yau geometry and the Landau-Ginzburg B-model, not A-model.

  \subsection*{Organization of the paper}
    The paper is organized as follows.
    In Section~\ref{sec:nondegenerate-quasihomogeneous-polynomial} and Section~\ref{sec:symmetry-group}, we
    review the definition of a nondegenerate quasihomogeneous polynomial and its symmetry group.
    These are background data for the Landau-Ginzburg orbifold.
    In Section~\ref{sec:landau-ginzburg-orbifold}, we define the Landau-Ginzburg orbifold as a bigraded vector
    space following Intriligator and Vafa~\cite{intriligator1990}.
    In Section~\ref{sec:generating-function}, the formula for the generating function of
    the Hodge numbers is given and some basic properties of the Hodge number are proved using this formula.
    Section~\ref{sec:orbifold-quintic-threefold} is devoted to a review on the result of Yu~\cite{Yu_2016}.
    In Section~\ref{sec:quintic-landau-ginzburg-orbifold}, we calculate the Hodge numbers of various Landau-Ginzburg orbifolds
    and prove our main therem in the paper.
    In Section~\ref{sec:conclusion}, we conclude the paper with commenting on some future prospects related to the Landau-Ginzburg orbifold.
    Appendix~\ref{sec:table} contains two tables which are part of Theorem~\ref{thm:nonsingular-quintic-threefold-hodge-number}
    and Theorem~\ref{thm:main-theorem}.
    Appendix~\ref{sec:hodge-number-calculation} describes a technical remark on the computation of
    the Hodge numbers for the proof of the main theorem.

  \subsection*{Acknowledgment}
    The author would like to thank his supervisor Toshiya Kawai for many discussions and valuable advice which improved
    this paper.

\section{Nondegenerate quasihomogeneous polynomial}
  \label{sec:nondegenerate-quasihomogeneous-polynomial}
  In this section, we define a quasihomogeneous polynomial and its nondegeneracy.
  A nondegenerate quasihomogeneous polynomial is one of background data of Landau-Ginzburg orbifold.

  \begin{definition}
    Let $W$ be a polynomial in $\C[x_{1},\dots,x_{n}]$ with a critical point at the origin.
    The ideal
    \begin{equation*}
      I_{\nabla W} = \left(\frac{\partial W}{\partial x_{1}},\dots,\frac{\partial W}{\partial x_{n}}\right)
      \subset \C[x_{1},\dots,x_{n}]
    \end{equation*}
    generated by the partial derivatives of $W$ is called the \emph{gradient ideal} of $W$.
    The quotient
    \begin{equation*}
      \mathcal{Q}_{W} = \C[x_{1},\dots,x_{n}]/I_{\nabla W}
    \end{equation*}
    by the gradient ideal is called the \emph{Jacobi ring} of $W$.
    The critical point of $W$ at the origin is said to be \emph{isolated} if the Jacobi ring of $W$ is
    finite dimensional as a vector space over $\C$.
  \end{definition}

  \begin{remark}
    \label{rem:jacobi-ring-invariant-singularity}
    The Jacobi ring $\mathcal{Q}_{W}$ is invariant under a linear change of the coordinates.
    More precisely, a linear change of the coordinates induces a transition of the exact sequence of $\C$-algebra
    \begin{equation*}
      0 \to I_{\nabla W} \to \C[x_{1}, \dots, x_{n}] \to \mathcal{Q}_{W} \to 0
    \end{equation*}
    to an isomorphic exact sequence~\cite{arnold1998}.
  \end{remark}

  \begin{definition}
    Let $W$ be a polynomial in $\C[x_{1},\dots,x_{n}]$.
    $W$ is called \emph{quasihomogeneous} if there exist positive integers $w_{1},\dots,w_{n}, d$
    which satisfy
    \begin{equation*}
      W(\lambda^{w_{1}} x_{1},\dots,\lambda^{w_{n}} x_{n}) = \lambda^{d} W(x_{1},\dots,x_{n})
    \end{equation*}
    for all $\lambda \in \C$.
    $w_i$ and $q_{i} \defeq w_{i}/d$ are called the \emph{weight} and the \emph{charge} of $x_{i}$ respectively.
    We denote the degree of a polynomial $f(x)$ which is weighted by charges $q_i$ by $\text{charge}(f)$.
  \end{definition}

  \begin{definition}
    \label{def:nondegenerate}
    A quasihomogeneous polynomial $W$ is said to be \emph{nondegenerate} if $W$ has an isolated critical point at the origin.
  \end{definition}

  \begin{example}
    \label{ex:type-An}
    The quasihomogeneous polynomials with one variable is homogeneous.
    More precisely, $W(x)$ in $\C[x]$ is quasihomogeneous if and only if $W(x)$ is the form of $c x^{n+1}$ for $c \in \Cx$ and $n \geq 1$.
    Then, the charge of $x$ is $1/n+1$.
    This type of singularity is called type $A_{n}$.
  \end{example}

  \begin{example}
    A quasihomogeneous polynomial is said to be \emph{invertible} if the number of monomials equals the number of variables.
    The invertible quasihomogeneous polynomial can be rescaled so that each monomial has a coefficient $1$.
    The invertible quasihomogeneous polynomials are completely classified by Kreuzer and Skarke~\cite{kreuzer1992}.
  \end{example}

  \begin{theorem}[Kreuzer-Skarke~\cite{kreuzer1992}]
    Any invertible quasihomogeneous polynomial is a decoupled sum of polynomials of one of the following two atomic types:
    \begin{description}
      \item[Loop type] $x_{1}^{a_{1}}x_{2} + x_{2}^{a_{2}}x_{3} + \dots + x_{n}^{a_{n}}x_{1}$ ($n \geq 2$),
      \item[Chain type] $x_{1}^{a_{1}}x_{2} + x_{2}^{a_{2}}x_{3} + \dots + x_{n}^{a_{n}}$ ($n \geq 1$).
    \end{description}
  \end{theorem}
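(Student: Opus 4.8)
The plan is to encode $W$ by its exponent data and translate nondegeneracy into rigid combinatorial constraints on that data. Since $W$ is invertible, after the rescaling mentioned above we may write
\begin{equation*}
  W = \sum_{i=1}^{n} \prod_{j=1}^{n} x_{j}^{E_{ij}}
\end{equation*}
for an $n \times n$ matrix $E = (E_{ij})$ with entries in $\Z_{\geq 0}$, the $i$-th row recording the exponents of the $i$-th monomial. First I would record two consequences of nondegeneracy. Quasihomogeneity reads $E q = (1,\dots,1)^{T}$ for the charge vector $q = (q_{1},\dots,q_{n})^{T}$, and a standard consequence of the isolated-singularity hypothesis is that $\det E \neq 0$ (equivalently, the charges are uniquely determined); this is precisely the property giving these polynomials the name \emph{invertible}. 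Second, every variable $x_{j}$ must actually occur in $W$, i.e. each column of $E$ is nonzero, for otherwise $\partial W / \partial x_{j}$ would vanish identically and the critical locus could not reduce to the origin.

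The heart of the argument, and the step I expect to be the main obstacle, is a structural lemma: after permuting variables and monomials one can arrange that each monomial has the form $x_{i}^{a_{i}}$ or $x_{i}^{a_{i}} x_{j}$ with $j \neq i$ (so that the off-diagonal entries of $E$ are $0$ or $1$, at most one per row), and that each variable $x_{i}$ is the distinguished \emph{head} variable of exactly one monomial. I would prove this through the Jacobian criterion: for a quasihomogeneous polynomial the critical point at the origin is isolated if and only if the partial derivatives $\partial_{1} W,\dots,\partial_{n} W$ have the origin as their only common zero. The delicate point is to show that if this ``head plus at most one tail'' pattern failed---if some monomial involved three distinct variables, or if some variable headed no monomial---then one could construct a nonzero simultaneous zero of all the partials on a suitable coordinate subspace, contradicting isolation. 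Carrying this out requires a careful case analysis exploiting that there are exactly $n$ monomials for the $n$ variables, so that any deficiency in covering one variable is forced to propagate and produce a positive-dimensional critical locus.

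Granting the lemma, the assignment sending each head variable to its tail (and $i \mapsto i$ when the monomial is the pure power $x_{i}^{a_{i}}$) defines a map $\pi$ on $\{1,\dots,n\}$ in which every node has out-degree one. I would then analyze the functional graph of $\pi$: each connected component is a directed cycle with rooted in-trees attached. The remaining work is to exclude the branching and the nontrivial trees hanging off cycles of length $\geq 2$, again using $\det E \neq 0$ together with the positivity of the charges to rule out the multiplicative dependences such extra edges would create. What survives is that each component is either a pure cycle or a single path terminating in a fixed point. A pure cycle of length $m$ reconstitutes a loop-type atom $x_{1}^{a_{1}}x_{2} + \dots + x_{m}^{a_{m}}x_{1}$, while a path ending in a fixed point reconstitutes a chain-type atom $x_{1}^{a_{1}}x_{2} + \dots + x_{m}^{a_{m}}$.

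Finally, the decoupling is immediate once the component structure is in place. A variable $x_{j}$ occurs only in the monomial it heads and in the monomial headed by $\pi^{-1}(j)$, and both of these lie in the same component of the functional graph; hence distinct components involve disjoint sets of variables. Therefore $W$ splits as the sum over components, each summand being a polynomial in its own block of variables, and each block is of loop or chain type by the previous paragraph. This exhibits $W$ as a decoupled sum of loop-type and chain-type atomic polynomials, as claimed.
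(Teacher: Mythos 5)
The paper itself never proves this statement---it is quoted as a classification theorem of Kreuzer and Skarke---so your proposal has to be judged on its own terms rather than against an in-paper argument. Its architecture is the right one, and it is essentially the shape of the known proof: encode $W$ in its exponent matrix $E$, show that every monomial is of the form $x_i^{a_i}$ or $x_i^{a_i}x_j$ with each variable heading exactly one monomial (the input you need here is exactly the paper's Lemma~\ref{lem:nondegenerate-contain-term}), and then read off loops and chains from the functional graph of the head-to-tail map $\pi$. The decoupling argument at the end is also fine once the graph has been pruned.

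The genuine gap is in the pruning step. You claim that branching, and trees hanging off cycles of length $\geq 2$, can be excluded ``using $\det E \neq 0$ together with the positivity of the charges.'' That is false: those two conditions are blind to branching. Take $W = x_1^2x_3 + x_2^2x_3 + x_3^2$. This is quasihomogeneous with three monomials in three variables, every monomial is of the allowed form, each variable heads exactly one monomial, the charges $(1/4,\,1/4,\,1/2)$ are positive, and $\det E = 8 \neq 0$; yet its functional graph is branched ($1 \to 3$, $2 \to 3$, $3 \to 3$), it is not a decoupled sum of loops and chains, and indeed it is degenerate: the critical locus contains the curve $x_3 = 0$, $x_1^2 + x_2^2 = 0$. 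The same happens for a tree attached to a $2$-cycle, e.g. $x_1^{a}x_2 + x_2^{b}x_3 + x_3^{c}x_2$, which is degenerate along a curve inside $\{x_2 = 0\}$ while having $\det E = a(bc-1) \neq 0$ and positive charges. So the forbidden configurations can only be eliminated by returning to the isolated-singularity hypothesis and exhibiting, configuration by configuration, a positive-dimensional critical locus exactly as in these examples; no linear-algebra property of $E$ can do it. (Relatedly, your opening claim that $\det E \neq 0$ is a ``standard consequence'' of nondegeneracy is itself something that is usually deduced from the classification, not used to prove it.) Since this pruning, together with the structural lemma you explicitly defer to a ``careful case analysis,'' is where all the content of the theorem lives, the proposal is a reasonable roadmap but not yet a proof.
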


    The atomic type with $n = 1$ for the chain type, i.e. the term of the form $x^{a}$, is sometimes called \emph{Fermat} type.

  \begin{lemma}
    \label{lem:findim-iff-isolated}
    Let $W(x_1, \dots, x_n)$ be a quasihomogeneous polynomial.
    $W$ is nondegenerate if and only if
    $\frac{\partial W}{\partial x_1} = \dots = \frac{\partial W}{\partial x_n} = 0$
    implies $x_1 = \dots = x_n = 0$.
  \end{lemma}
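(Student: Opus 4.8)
The plan is to reduce the statement to the standard fact that, over the algebraically closed field $\C$, a quotient $\C[x_{1},\dots,x_{n}]/I$ is finite-dimensional as a $\C$-vector space if and only if the common zero locus $V(I) \subset \C^{n}$ is a finite set. Applying this with $I = I_{\nabla W}$, Definition~\ref{def:nondegenerate} together with the definition of an isolated critical point (finite-dimensionality of $\mathcal{Q}_{W}$) translates ``$W$ nondegenerate'' into ``$V(I_{\nabla W})$ finite.'' It then remains to show that for a quasihomogeneous $W$ the finite set $V(I_{\nabla W})$ can only be $\{0\}$, which is exactly the condition that all partial derivatives vanish simultaneously only at the origin.

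For the standard fact, in the direction ``finite-dimensional $\Rightarrow$ finite variety'' I would observe that for each $i$ the powers $1, x_{i}, x_{i}^{2}, \dots$ become linearly dependent in the quotient, producing a nonzero univariate polynomial in $x_{i}$ lying in $I$; hence each coordinate of a point of $V(I)$ is confined to the finitely many roots of that polynomial. For the converse ``finite variety $\Rightarrow$ finite-dimensional'' I would invoke the Nullstellensatz: if $V(I)$ is finite, then for each $i$ the polynomial $\prod_{a \in V(I)}(x_{i} - a_{i})$ lies in $\sqrt{I}$, so some power of it lies in $I$, giving a monic relation for $x_{i}$ in the quotient; the monomials of bounded degree in each variable then span $\C[x_{1},\dots,x_{n}]/I$ over $\C$.

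The ingredient specific to this lemma is the cone structure of the critical locus. Since $W$ is quasihomogeneous of weighted degree $d$ with weights $w_{i}$, differentiating the defining identity shows that each $\partial W / \partial x_{i}$ is again quasihomogeneous (of weighted degree $d - w_{i}$), so $V(I_{\nabla W})$ is invariant under the weighted scaling $x \mapsto (\lambda^{w_{1}}x_{1},\dots,\lambda^{w_{n}}x_{n})$ for $\lambda \in \Cx$. If $V(I_{\nabla W})$ contained a nonzero point $x^{*}$, then some coordinate $x^{*}_{j} \neq 0$ with $w_{j} \geq 1$, and the orbit of $x^{*}$ would force the $j$-th coordinate to take the infinitely many values $\lambda^{w_{j}} x^{*}_{j}$, contradicting finiteness. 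Hence a finite critical locus must equal $\{0\}$, and combining this with the standard fact gives the asserted equivalence.

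The hard part is not the cone argument, which is elementary, but keeping the reduction honest: one must be careful that the equivalence ``finite-dimensional quotient $\iff$ finite variety'' genuinely uses that $\C$ is algebraically closed (through the Nullstellensatz), and that the definition of an isolated critical point is precisely the finite-dimensionality of $\mathcal{Q}_{W}$, so that no gap remains between the hypotheses as stated and the commutative-algebra input.
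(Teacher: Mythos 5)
Your proof is correct, and it rests on the same two essential ingredients as the paper's: the Nullstellensatz and the weighted $\Cx$-scaling invariance of the critical locus. The organization, however, differs. The paper argues directly: for the forward direction it notes that a finite-dimensional $\mathcal{Q}_W$ is Artinian, hence has only finitely many maximal ideals, and then uses the scaling action to manufacture a $\Cx$-family of maximal ideals out of a hypothetical nonzero critical point; for the converse it computes $\sqrt{I_{\nabla W}} = (x_1,\dots,x_n)$ and bounds $\dim_{\C} \mathcal{Q}_W$ by $\dim_{\C} \C[x_1,\dots,x_n]/(x_1,\dots,x_n)^k$. You instead factor the argument through the general zero-dimensionality criterion ($\C[x_1,\dots,x_n]/I$ is finite-dimensional if and only if $V(I)$ is finite), proving its forward half by the elementary linear-dependence trick (powers of each $x_i$ produce a univariate element of $I$) rather than by Artinian theory, and only then apply the cone argument at the level of points. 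Your route buys modularity and elementariness: no facts about Artinian rings are needed, and the criterion you isolate is reusable; the paper's route is shorter if one takes that ring theory as known. One further point in your favor: your cone argument correctly scales by $(\lambda^{w_1},\dots,\lambda^{w_n})$, whereas the paper's proof as written rescales a critical point $(a_1,\dots,a_n)$ linearly to $(k a_1,\dots,k a_n)$, which preserves the critical locus only in the homogeneous case; the weighted scaling you use is what is actually required for a general quasihomogeneous $W$.
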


  \begin{proof}
    Assume that $W$ is nondegenerate, i.e. $\mathcal{Q}_W \defeq \C[x_1, \dots, x_n]/I_{\nabla W}$ is finite dimensional.
    Then $\mathcal{Q}_W$ is Artinian, and therefore $\Spec{\mathcal{Q}_W}$ is finite.
    In particular, $\mathcal{Q}_W$ has finitely many maximal ideals.
    Clearly, $(x_1, \dots, x_n)/I_{\nabla W}$ is a maximal ideal.
    If $(x_1 - a_1, \dots, x_n - a_n)/I_{\nabla W}$ is maximal for $(a_1, \dots, a_n) \neq (0, \dots, 0)$,
    $(x_1 - k a_1, \dots, x_n - k a_n)/I_{\nabla W}$ is also maximal for any $k \in \Cx$, yielding
    the contradiction.
    Conversely, assume that $\frac{\partial W}{\partial x_1} = \dots = \frac{\partial W}{\partial x_n} = 0$ implies $x_1 = \dots = x_n = 0$.
    Then we see $\Radical{I_{\nabla W}} = (x_1, \dots, x_n)$, and hence
    $(x_1, \dots, x_n)^k \subset I_{\nabla W}$ for sufficiently large $k$.
    $\C[x_1, \dots, x_n]/(x_1, \dots, x_n)^k$ has only one prime ideal $(x_1, \dots, x_n)/(x_1, \dots, x_n)^k$ and therefore Artinian.
    Since Artinian $\C$-algebra is finite dimensional, we have
    \begin{equation*}
      \dim_{\C} \mathcal{Q}_W \leq \dim_{\C} \C[x_1, \dots, x_n]/(x_1, \dots, x_n)^k < \infty. \qedhere
    \end{equation*}
  \end{proof}

  \begin{lemma}
    \label{lem:nondegenerate-contain-term}
    Let $W(x_1, \dots, x_n)$ be a nondegenerate quasihomogeneous polynomial.
    Then for each $i$, $W$ contains a monomial of the form $x_i^a x_j$ for some $j$ and $a$.
  \end{lemma}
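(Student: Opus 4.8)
The plan is to argue by contradiction, reducing the statement to the criterion of Lemma~\ref{lem:findim-iff-isolated}: since nondegeneracy is equivalent to the partial derivatives having no common zero other than the origin, it suffices to produce a nonzero common zero of all the $\partial W/\partial x_j$ whenever the conclusion fails. So I would fix an index $i$ and suppose, toward a contradiction, that $W$ contains no monomial of the form $x_i^a x_j$ for any $j$ and any $a \geq 1$. The goal is then to show that the entire $x_i$-coordinate axis consists of critical points of $W$, which immediately furnishes a nonzero critical point.

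First I would record two standing facts. Because $W$ is quasihomogeneous with positive weights it has no constant term, and because it has a critical point at the origin it has no linear term; hence every monomial of $W$ has total degree at least $2$. These will be used to dispose of the low-degree boundary cases below. Next, the main step is to restrict each partial derivative to the $x_i$-axis, that is, to set $x_k = 0$ for all $k \neq i$, and to identify which monomials of $W$ survive. The computation is elementary: a monomial $m = c \prod_k x_k^{b_k}$ of $W$ contributes a nonzero term to $\partial W/\partial x_j$ after this restriction only when $b_k = \delta_{kj}$ for every $k \neq i$.

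I would then split into two cases according to whether $j \neq i$ or $j = i$. For $j \neq i$, the condition forces $b_j = 1$ and $b_k = 0$ for $k \neq i,j$, so $m = c\, x_i^{b_i} x_j$ is a monomial of the form $x_i^a x_j$ with $a = b_i$; the case $a \geq 1$ is excluded by hypothesis, and $a = 0$ would make $m$ a linear term, which was ruled out. For $j = i$, the same condition forces $b_k = 0$ for all $k \neq i$, so $m = c\, x_i^{b_i}$ is a pure power; but then, using the total-degree bound $b_i \geq 2$, one has $x_i^{b_i} = x_i^{b_i-1} x_i$, again of the excluded form (with $j = i$, $a = b_i - 1 \geq 1$). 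In either case no such monomial can occur, so every $\partial W/\partial x_j$ vanishes identically along the $x_i$-axis.

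Finally I would conclude: the point $(0,\dots,0,1,0,\dots,0)$ with the $1$ in the $i$-th slot is a nonzero common zero of all partial derivatives of $W$, so by Lemma~\ref{lem:findim-iff-isolated} the polynomial $W$ is degenerate, contradicting the hypothesis. Hence for each $i$ the polynomial $W$ must contain a monomial of the form $x_i^a x_j$. I do not expect a genuine obstacle in this argument; the only point demanding care is the bookkeeping that separates $j = i$ from $j \neq i$ and the correct handling of the boundary cases $a = 0$ and the pure-power degree bound, which is precisely where the quasihomogeneity and critical-point hypotheses are consumed.
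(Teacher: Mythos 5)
Your proof is correct and follows essentially the same route as the paper: argue by contradiction, show that all partial derivatives vanish along the $x_i$-axis, and invoke Lemma~\ref{lem:findim-iff-isolated} to contradict nondegeneracy. The only difference is presentational — the paper compresses your monomial-by-monomial bookkeeping into the single observation that $W \in (x_2, \dots, x_n)^2$ (after taking $i=1$), from which the vanishing of the partials on the axis is immediate.
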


  \begin{proof}
    If $n = 1$, this statement is trivial because $W$ must be $c x_{1}^{a}$ where $c \in \Cx$ and $a \geq 2$.
    For $n \geq 2$, assume otherwise for $i=1$.
    Then we have $W \in (x_2, \dots, x_n)^2$.
    Hence $(a_1, 0, \dots, 0)$ for any $a_1 \in \C$ is a solution of
    $\frac{\partial W}{\partial x_1} = \dots = \frac{\partial W}{\partial x_n} = 0$
    which yields the contradiction by Lemma~\ref{lem:findim-iff-isolated}.
  \end{proof}

  \begin{remark}
    Lemma~\ref{lem:nondegenerate-contain-term} does not give a sufficient condition.
    For example, let $W(x_{1}, x_{2}, x_{3}) = x_{1}^{3} + x_{2}^{3} + x_{3}^{3} - 3 \psi x_{1} x_{2} x_{3}$ where $\psi \in \C$.
    Then $W$ is a quasihomogeneous polynomial and contains the terms $x_{i}^{3}$ for $i = 0, 1, 2$.
    However, $W$ is nondegenerate if and only if $\psi^{3} \neq 1$.
  \end{remark}

  For a nondegenerate quasihomogeneous polynomial with weights $w_{1}$, $\dots$, $w_{n}$, we can assume $\gcd(w_1, \dots, w_n) = 1$.
  If a quasihomogeneous polynomial does not contain a term of the form which is proportional to $x_{i}x_{j}$, the weights
  are uniquely determined by Lemma~\ref{lem:nondegenerate-contain-term} and furthermore charges satisfy $q_{i} \geq 1/2$.

\section{Symmetry group of nondegenerate quasihomogeneous polynomial}
  \label{sec:symmetry-group}
  In this section, we define the symmetry group of a quasihomogeneous polynomial.
  The Landau-Ginzburg orbifolds are constructed from the pair of a nondegenerate quasihomogeneous polynomial and
  a finite subgroup of the symmetry group of the polynomial.

  Let $V$ be a $n$-dimensional $\C$-vector space with a basis $x_1, \dots, x_n$.
  We define the action of $\GL_{n}(\C)$ on $V$ by $g{\cdot}x_i = \sum_{j=1}^{n}g_{i j}x_{j}$ for any $g = (g_{i j})$ in $\GL_{n}(\C)$ and extending linearly.
  This action induces the action of $\GL_{n}(\C)$ on the polynomial ring $\C[x_{1}, \dots, x_{n}]$ by $(g{\cdot}f)(x_{1}, \dots, x_{n}) = f(g{\cdot}x_{1}, \dots, g{\cdot}x_{n})$.

  \begin{definition}
    Let $W(x_{1},\dots,x_{n})$ be a nondegenerate quasihomogeneous polynomial
    with weights $w_1, \dots, w_n$.
    The group
    \begin{equation*}
      G_{W} = \Set{g \in \GL_n(\C) |
        \begin{array}{c}
          (g{\cdot}W)(x_1, \dots, x_n) = W(x_1, \dots, x_n) \\
          g_{i j} = 0 \ \text{if} \ w_i \neq w_j
        \end{array}
      }
    \end{equation*}
    is called the \emph{(maximal) symmetry group} of $W$.
  \end{definition}

  Exchanging the indices of the coordinates, we can assume $w_1 \leq \dots \leq w_n$.
  Then the condition $g_{i j} = 0$ if $w_i \neq w_j$ means that $g$ has a block diagonal form.
  Hence, this condition is equivalent to the condition that each $g$ commutes with the action of $\lambda \in \Cx$ where
  $\lambda$ acts on $(x_{1}, \dots, x_{n})$ by $\lambda{\cdot}(x_{1}, \dots, x_{n}) = (\lambda^{w_{1}}x_{1}, \dots, \lambda^{w_{n}}x_{n})$.
  \footnote{The author thanks Y. Ruan for pointing out this equivalence.}

  The symmetry group $G_{W}$ of any nondegenerate quasihomogeneous polynomial $W(x_1, \dots, x_n)$ is not trivial
  since it contains
  \begin{equation*}
    J \defeq \diag\left(\e{q_1},\dots,\e{q_n}\right)
  \end{equation*}
  which is called the \emph{exponential grading operator} in some literature.
  Note that the exponential grading operator is an element of the center of $G_W$.
  In the following, we discuss on orbifolding by a finite subgroup of $G_W$.

  \begin{remark}
    A subgroup $G$ of the maximal symmetry group is called a \emph{diagonal symmetry group} if $G$ consists of diagonal
    matrices.
    Clearly, a diagonal symmetry group is abelian.
    In many precedent researches on the Landau-Ginzburg A- or B-models, orbifolds are discussed
    assuming that the symmetry groups are diagonal ones.
    One of the main theme of the paper is to consider Landau-Ginzburg orbifolds constructed from possibly
    nonabelian symmetry groups besides diagonal ones.
  \end{remark}

  \begin{definition}
    Let $G$ be a finite subgroup of the symmetry group of some nondegenerate quasihomogeneous polynomial
    in $\C[x_1,\dots,x_n]$.
    We define the \emph{age} of $g \in G$ as
    \begin{equation*}
      \age(g) = \frac{1}{2\pi\iu} \sum_{i=1}^{n} \log \lambda_i,
    \end{equation*}
    where $\lambda_1,\dots,\lambda_n$ are the eigenvalues of $g$
    and the branch of the logarithmic function is chosen
    to satisfy $0 \le \log{z} < 2\pi\iu$ for $z \in \Cx$ s.t. $|z| = 1$.
  \end{definition}

  For $g \in G$, we denote the eigenspace of $g$ with the eigenvalue $1$ by $V^{g}$, i.e. $V^{g} = \Ker(E_{n} - g)$.
  Here, $E_{n}$ means the identity matrix of rank $n$.
  Let $n_{g}$ be the dimension of $V^{g}$.

  \begin{lemma}
    \label{lem:age-ng-relation}
    Let $W(x_1, \dots, x_n)$ be a quasihomogeneous polynomial and $G$ be
    a finite subgroup of the symmetry group $G_{W}$.
    For every $g \in G$, we have
    \begin{equation*}
      \age(g) + \age(g^{-1}) = n - n_{g}.
    \end{equation*}
  \end{lemma}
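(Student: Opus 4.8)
The plan is to exploit that every $g \in G$ has finite order, so after diagonalization its eigenvalues are roots of unity lying on the unit circle, and then to track the chosen branch of the logarithm through inversion.

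First I would observe that since $G$ is finite, $g$ has finite order and is therefore diagonalizable over $\C$ with eigenvalues $\lambda_1, \dots, \lambda_n$ satisfying $|\lambda_i| = 1$. Writing each eigenvalue as $\lambda_i = e^{2\pi\iu\theta_i}$ with $\theta_i \in [0,1)$ determined by the prescribed branch $0 \le \log z < 2\pi\iu$, we have $\age(g) = \sum_{i=1}^n \theta_i$.

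Next I would determine the eigenvalues of $g^{-1}$. They are precisely $\lambda_1^{-1}, \dots, \lambda_n^{-1}$, and since each lies on the unit circle, $\lambda_i^{-1} = \overline{\lambda_i} = e^{-2\pi\iu\theta_i}$. The content of the lemma is entirely in how the branch convention assigns $\log \lambda_i^{-1}$: if $\theta_i = 0$, i.e.\ $\lambda_i = 1$, then $\lambda_i^{-1} = 1$ and the branch gives $\log \lambda_i^{-1} = 0$; if $\theta_i \in (0,1)$, then $1 - \theta_i \in (0,1)$, so $\lambda_i^{-1} = e^{2\pi\iu(1-\theta_i)}$ and the branch gives $\frac{1}{2\pi\iu}\log\lambda_i^{-1} = 1 - \theta_i$. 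Consequently each eigenvalue index contributes $\theta_i + (1 - \theta_i) = 1$ to $\age(g) + \age(g^{-1})$ when $\lambda_i \neq 1$, and contributes $0 + 0 = 0$ when $\lambda_i = 1$.

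Finally I would count. Because $g$ is diagonalizable, the multiplicity of the eigenvalue $1$ equals $\dim V^g = n_g$, so exactly $n_g$ of the indices give a zero contribution and the remaining $n - n_g$ each give $1$. Summing yields $\age(g) + \age(g^{-1}) = n - n_g$. The only real subtlety, and the step I would treat most carefully, is the asymmetry of the half-open branch $[0, 2\pi\iu)$ at the fixed eigenvalue $1$: one must verify that the convention sends $\log 1$ to $0$ (not $2\pi\iu$) for both $g$ and $g^{-1}$, which is exactly what singles out $V^{g}$ and produces the defect $n - n_g$ rather than $n$.
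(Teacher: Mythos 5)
Your proposal is correct and follows essentially the same argument as the paper: it reduces to the observation that for each eigenvalue $\lambda_i \neq 1$ the chosen branch gives $\log \lambda_i + \log \lambda_i^{-1} = 2\pi\iu$, while the eigenvalue $1$ contributes nothing, and then counts that exactly $n - n_g$ eigenvalues differ from $1$. Your version merely makes explicit two points the paper leaves implicit (diagonalizability from finiteness of $G$, and the behavior of the half-open branch at $\lambda_i = 1$), which is a fine elaboration rather than a different route.
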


  \begin{proof}
    Let $\lambda_1, \dots, \lambda_n$ be the eigenvalues of $g$.
    Then we have $\log\lambda_i^{-1} = 2\pi\iu - \log\lambda_i$ if $\lambda_i \neq 1$.
    Since the number of $i$'s with $\lambda_{i} \neq 1$ is $n - n_{g}$, we have $\age(g) + \age(g^{-1}) = n - n_{g}$.
  \end{proof}

  \begin{definition}
    Let $W(x_1, \dots, x_n)$ be a nondegenerate quasihomogeneous polynomial with charges $q_i$.
    We define the \emph{central charge} $\Hat{c}$ of $W$ by
    \begin{equation*}
      \Hat{c} = \sum_{i=1}^{n} (1-2q_i).
    \end{equation*}
  \end{definition}

  \begin{remark}
    This definition of the central charge $\Hat{c}$ is in the sense of $\mathcal{N} = 2$ supersymmetric conformal field theories.
    There is a relation with the ordinary central charge $c$ for conformal field theories:
    \begin{equation*}
      c = 3\Hat{c}.
    \end{equation*}
  \end{remark}

  \begin{definition}
    \label{def:calabi-yau-condition}
    The charges $q_{1}, \dots, q_{n}$ of a nondegenerate quasihomogeneous polynomial is said to satisfy
    the \emph{Calabi-Yau condition} if the equation
    \begin{equation*}
      q_{1} + \dots + q_{n} = 1
    \end{equation*}
    holds.
    Similarly, the condition
    \begin{equation*}
      q_{1} + \dots + q_{n} \in \Z_{\geq 0}
    \end{equation*}
    is called the \emph{generalized Calabi-Yau condition}.
  \end{definition}

  \begin{remark}
    The geometric interpretation of these conditions are as follows:
    If the Calabi-Yau condition is satisfied, $X_{W} = \set{W = 0}$ is a Calabi-Yau hypersurface
    in the weighted projective space $\P(w_{1}, \dots, w_{n})$~\cite{chiodo2011}.
    More generally, Borisov~\cite{borisov2013} proposed that the generalized Calabi-Yau condition
    is analogous to the Batyrev-Borisov construction for a Calabi-Yau complete intersection of $\sum_{i} q_{i}$ hypersurfaces.
  \end{remark}

  \begin{lemma}
    The charges $q_{1}, \dots, q_{n}$ satisfy the generalized Calabi-Yau condition if and only if
    the exponential grading operator $J$ is an element of $\SL_{n}(\C)$.
  \end{lemma}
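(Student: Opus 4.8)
The plan is to reduce everything to a single determinant computation, since the exponential grading operator is diagonal. Writing $\e{x}$ for $\exp(2\pi\iu x)$ as in the definition of $J$, the determinant of a diagonal matrix is the product of its diagonal entries, so I would first record
\[
  \det J = \prod_{i=1}^{n} \e{q_i} = \e{\sum_{i=1}^{n} q_i},
\]
using that $\e{\cdot}$ is multiplicative (it is the restriction of a group homomorphism $\C \to \Cx$). This turns the membership condition $J \in \SL_n(\C)$, which is by definition the statement $\det J = 1$, into the single scalar equation $\e{\sum_i q_i} = 1$.

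Next I would invoke the standard fact that $\e{t} = \exp(2\pi\iu t) = 1$ holds if and only if $t \in \Z$. Applying this with $t = \sum_{i=1}^{n} q_i$ gives the equivalence $J \in \SL_n(\C) \iff \sum_{i=1}^{n} q_i \in \Z$. At this stage I have matched $\SL_n(\C)$-membership with integrality of the total charge, which is the essential content of the lemma.

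The only remaining point is to upgrade $\Z$ to $\Z_{\geq 0}$, so as to land exactly on the generalized Calabi-Yau condition $\sum_i q_i \in \Z_{\geq 0}$ of Definition~\ref{def:calabi-yau-condition}. For this I would observe that each charge $q_i = w_i/d$ is a quotient of positive integers and hence strictly positive, so $\sum_{i=1}^{n} q_i > 0$. Therefore $\sum_i q_i \in \Z$ forces $\sum_i q_i \in \Z_{> 0} \subset \Z_{\geq 0}$, while the reverse inclusion $\Z_{\geq 0} \subset \Z$ is trivial; combining this with the equivalence of the previous paragraph finishes the proof.

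I do not anticipate a genuine obstacle here: the argument is a direct computation plus the elementary observation that positivity of the charges automatically rules out the value $0$, so the distinction between $\Z$ and $\Z_{\geq 0}$ is harmless. The only bookkeeping care needed is to keep the normalization $\e{x} = \exp(2\pi\iu x)$ consistent with the one used in the definitions of $J$ and of $\age$.
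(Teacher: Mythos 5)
Your proof is correct and is essentially the paper's own argument: the paper simply notes that $\det J = \exp\big(2\pi\iu(q_1 + \dots + q_n)\big)$ and declares the lemma clear. Your additional observation that positivity of the charges makes the distinction between $\Z$ and $\Z_{\geq 0}$ harmless is a worthwhile detail the paper leaves implicit.
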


  \begin{proof}
    Clear since the determinant of $J$ is $\exp \big( 2\pi\iu (q_{1} + \dots + q_{n}) \big)$.
  \end{proof}

\section{Landau-Ginzburg orbifold}
  \label{sec:landau-ginzburg-orbifold}
  In this section, we give the definition of the Landau-Ginzburg orbifold.
  Roughly speaking, a Landau-Ginzburg orbifold is a direct sum of Jacobi rings tensored with a certain one dimensional
  vector space.
  The construction of a Landau-Ginzburg orbifold as a bigraded vector space was given
  by Intriligator and Vafa~\cite{intriligator1990}.
  We define a Landau-Ginzburg orbifold following their construction and also check that this definition
  is well-defined, which was not discussed by them.

  Let $W(x_1, \dots, x_n)$ be a nondegenerate quasihomogeneous polynomial
  and $G$ be a finite subgroup of the symmetry group $G_{W}$.
  Let $y_1, \dots, y_n$ be eigenvectors of $g = (g_{i j})$ which correspond to the eigenvalues $\lambda_1, \dots, \lambda_n$.
  We can assume that $y_i$ has the same weight and charge with $x_i$ since $g_{i j} = 0$ if $q_{i} \neq q_{j}$.
  Let $W'$ be a polynomial such that $W'(y_1, \dots, y_n) = W(x_1, \dots, x_n)$.
  Denote by $I^{g} = \set{i_1, \dots, i_{n_{g}}}$ a set of indices of $y_{i}$ s.t. $y_{i_1}, \dots, y_{i_{n_{g}}}$ become a basis of $V^g = \Ker(E_{n} - g)$.

  If $n_{g}$ is greater than zero, $I^{g}$ is not the empty set.
  We define a polynomial $W^{g}$ in $R^{g} \defeq \C[y_{i_{1}}, \dots, y_{i_{n_{g}}}]$
  by $W^g(y_{i_1}, \dots, y_{i_{n_{g}}}) = W'(y_1, \dots, y_n)\vert_{R^g}$,
  that is, setting $y_i = 0$ if $\lambda_i \neq 1$.
  The restriction of $C_{G}(g)$ to $V^{g}$ makes $R^{g}$ a $C_{G}(g)$-module.
  If $n_{g}$ is zero, we regard $R^{g}$ as a trivial $C_{G}(g)$-module $\C$, $W^{g}$ as some constant number and $I_{\nabla W^{g}}$ as
  the zero ideal of $R^{g}$.

  \begin{lemma}
    \label{lem:sector-generator}
    If $n_{g}$ is greater than zero, $W^g$ is a nondegenerate quasihomogeneous polynomial and
    the centralizer $C_G(g)$ of $g$ is a finite subgroup of the symmetry group $G_{W^{g}}$.
  \end{lemma}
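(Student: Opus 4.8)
The plan is to check separately the three assertions hidden in the statement: that $W^g$ is quasihomogeneous, that it is nondegenerate, and that restriction to $V^g$ carries $C_G(g)$ into $G_{W^g}$. Quasihomogeneity is immediate. Since $g \in G_W$ commutes with the $\Cx$-weight action, its eigenvectors $y_1, \dots, y_n$ may be chosen inside the weight spaces, so $W'$ is again quasihomogeneous in the $y$-coordinates with the same weights $w_i$ and degree $d$. Setting $y_j = 0$ for $j \notin I^g$ preserves this relation, hence $W^g$ is quasihomogeneous of degree $d$ with weights $w_{i_1}, \dots, w_{i_{n_g}}$, and in particular it has a critical point at the origin.

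The heart of the proof is nondegeneracy, which by Lemma~\ref{lem:findim-iff-isolated} amounts to showing that any common zero $p \in V^g$ of the partials $\partial W^g / \partial y_i$ ($i \in I^g$) is the origin. First, differentiation along a fixed direction commutes with the restriction, so $\partial W^g/\partial y_i = (\partial W'/\partial y_i)\vert_{V^g}$ for $i \in I^g$; thus $p$ kills these partials of $W'$ as well. The key point is to control the remaining partials $\partial W'/\partial y_j$ for $j \notin I^g$. For this I would differentiate the invariance $W'(\lambda_1 y_1, \dots, \lambda_n y_n) = W'(y_1, \dots, y_n)$, which expresses $g{\cdot}W = W$ in the eigenbasis, to get $(\partial W'/\partial y_j)(\lambda{\cdot}y) = \lambda_j^{-1} (\partial W'/\partial y_j)(y)$. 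Evaluating at $p$, where $\lambda{\cdot}p = p$ because the non-fixed coordinates of $p$ vanish and the fixed eigenvalues equal $1$, yields $(1 - \lambda_j^{-1})(\partial W'/\partial y_j)(p) = 0$; since $\lambda_j \neq 1$ for $j \notin I^g$, the non-fixed partials also vanish at $p$. Hence every partial of $W'$ vanishes at $p$, and the nondegeneracy of $W$ — inherited by $W'$ under the linear change of coordinates by Remark~\ref{rem:jacobi-ring-invariant-singularity} — forces $p = 0$ via Lemma~\ref{lem:findim-iff-isolated}.

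It remains to embed $C_G(g)$ into $G_{W^g}$. For $h \in C_G(g)$, the relation $g(hv) = h(gv)$ shows that $h$ preserves every eigenspace of $g$; in particular it preserves both $V^g$ and the span of the non-fixed eigenvectors, so restriction defines a homomorphism $C_G(g) \to \GL(V^{g})$ with finite image. Because $h$ respects this splitting, setting $y_j = 0$ for $j \notin I^g$ intertwines the $h$-action, and the identity $h{\cdot}W' = W'$ restricts to $h\vert_{V^g}{\cdot}W^g = W^g$. For the weight condition defining $G_{W^g}$ I would use the equivalent formulation recalled after the definition of $G_W$: as $h$ commutes with the $\Cx$-action on $V$ and $V^g$ is $\Cx$-invariant (since $g$ commutes with that action), $h\vert_{V^g}$ commutes with the induced $\Cx$-action on $V^g$, which is precisely the required block-diagonal weight condition. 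Therefore $h\vert_{V^g} \in G_{W^g}$, and $C_G(g)$ is a finite subgroup of $G_{W^g}$.

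The main obstacle is exactly the vanishing of the non-fixed partials $\partial W'/\partial y_j$ along $V^g$: this symmetry input is what reduces nondegeneracy of $W^g$ to that of $W$, whereas the quasihomogeneity and the centralizer statements are formal consequences of $h$ commuting with $g$ and with the weight action.
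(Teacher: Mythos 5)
Your proof is correct, but it reaches nondegeneracy by a genuinely different mechanism than the paper. Both arguments turn on the same symmetry input — that the non-fixed partials $\partial W'/\partial y_j$, $j \notin I^g$, vanish along $V^g$ — but you establish and exploit it pointwise: you differentiate the invariance identity $W'(\lambda_1 y_1, \dots, \lambda_n y_n) = W'(y)$ to get $(\partial W'/\partial y_j)(\lambda{\cdot}y) = \lambda_j^{-1}(\partial W'/\partial y_j)(y)$, evaluate at a critical point $p \in V^g$ of $W^g$ to kill the non-fixed partials, and then invoke Lemma~\ref{lem:findim-iff-isolated} in both directions (for $W'$ to force $p = 0$, and for $W^g$ to convert the trivial critical locus back into finite-dimensionality of $\mathcal{Q}_{W^g}$). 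The paper never passes through the zero-locus characterization: it works ideal-theoretically, first showing $W' - W^g \in (y_i)_{i \notin I^g}^2$ by expanding $W' - W^g = \sum_{i \notin I^g} y_i f_i$ and using $g$-invariance to push each $f_i$ into $(y_i)_{i \notin I^g}$, and then deducing the ideal containment
\begin{equation*}
  (\partial W^{g}/\partial y_{i})_{i \in I^{g}} + (y_{i})_{i \notin I^{g}}
  \supset (\partial W'/\partial y_{i})_{i \in \set{1, \dots, n}},
\end{equation*}
which gives the explicit dimension bound $\dim_{\C} \mathcal{Q}_{W^{g}} \leq \dim_{\C} \mathcal{Q}_{W'} < \infty$ directly from the definition of nondegeneracy. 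Your route buys geometric transparency (critical points of $W^g$ lift to critical points of $W'$) and you are more careful than the paper on two side points the paper leaves implicit: that $W^g$ is quasihomogeneous with the inherited weights, and that the restriction $h\vert_{V^g}$ satisfies the block-diagonal weight condition in the definition of $G_{W^{g}}$ (via commutation with the $\Cx$-action). The paper's route buys a quantitative statement — a dimension inequality between the Jacobi rings — and stays entirely inside commutative algebra, avoiding any appeal to the Nullstellensatz-type argument hidden in Lemma~\ref{lem:findim-iff-isolated}. The centralizer part of your proof coincides with the paper's: restrict the invariance $h{\cdot}W' = W'$ to $y_i = 0$ for $i \notin I^g$, using that $h$ preserves the eigenspace decomposition of $g$.
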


  \begin{proof}
    For nondegeneracy, it suffices to show that $R^{g}/I_{\nabla W^{g}}$ is finite dimensional.
    By the construction of $W^{g}$, we see that $W' - W^{g} \in (y_{i})_{i \notin I^{g}}$.
    In fact, we have $W' - W^{g} \subset (y_{i})_{i \notin I^{g}}^{2}$.
    We can write $W - W^{g}$ as
    \begin{equation*}
      W' - W^{g} = \sum_{i \notin I^{g}}y_{i}f_{i}(y_{1}, \dots, y_{n}).
    \end{equation*}
    Now, consider the action of $g$ on the above equation.
    The left hand side is invariant, and which shows that each $f_{i}$ must be an element of the ideal $(y_{i})_{i \notin I^{g}}$.
    Hence we have $W' - W^{g} \subset (y_{i})_{i \notin I^{g}}^{2}$.
    It follows that $(\partial W' / \partial y_{i})_{i \notin I^{g}} \subset (y_{i})_{i \notin I^{g}}$ and $(\partial (W'-W^{g}) / \partial y_{i})_{i \in I^{g}} \subset (y_i)^{2}_{i \notin I^{g}}$.
    Therefore we have
    \begin{equation*}
      (\partial W^{g} / \partial y_{i})_{i \in I^{g}} + (y_{i})_{i \notin I^{g}}
      = (\partial W' / \partial y_{i})_{i \in I^{g}} + (y_{i})_{i \notin I^{g}}
      \supset (\partial W' / \partial y_{i})_{i \in \set{1,\dots,n}}
    \end{equation*}
    which yields nondegeneracy as
    \begin{align*}
      \dim_{\C} R^{g}/I_{\nabla W^{g}} &= \dim_{\C} \C[y_{1}, \dots, y_{n}] / \big( (\partial W^{g} / \partial y_{i})_{i \in I^{g}} + (y_{i})_{i \notin I^{g}} \big) \\
      &\leq \dim_{\C} \C[y_{1}, \dots, y_{n}] / (\partial W' / \partial y_{i})_{i \in \set{1, \dots, n}} \\
      &= \dim_{\C} \C[y_{1}, \dots, y_{n}] / I_{\nabla W^{g}} < \infty.
    \end{align*}
    It is clear that $C_{G}(g)$ is finite group.
    Let $h$ be an element of $C_{G}(g) \subset G$.
    By definition of the symmetry group, we see $W'(h{\cdot}y_{1}, \dots, h{\cdot}y_{n}) = W'(y_{1}, \dots, y_{n})$.
    Restricting this equation to $y_{i} = 0$ for $i \notin I^{g}$, we have $W^{g}(h{\cdot}y_{i_{1}}, \dots, h{\cdot}y_{i_{n_{g}}}) = W^{g}(y_{i_{1}}, \dots, y_{i_{n_{g}}})$.
  \end{proof}

  Assume that $n_{g}$ is greater than zero.
  Let us consider the action of $h = (h_{i j}) \in C_{G}(g)$ to generators of $I_{\nabla W^{g}}$, namely $\partial W^{g} / \partial y_{i}$, $i \in I^{g}$.
  In general, if $f$ is a polynomial in $\C[x_{1}, \dots, x_{n}]$ and $g$ is an element of $\GL_{n}(\C)$,
  the action of $g$ and the derivative with respect to $x_{i}$ have a relation
  \begin{equation*}
    g {\cdot} \frac{\partial}{\partial x_{i}} f(x_{1}, \dots, x_{n}) = \sum_{j = 1}^{n} g_{j i}^{-1} \frac{\partial}{\partial x_{j}} (g {\cdot} f)(x_{1}, \dots, x_{n}).
  \end{equation*}
  Therefore we have
  \begin{equation*}
    h {\cdot} \frac{\partial W^{g}}{\partial y_{i}} = \sum_{j \in I^{g}} h_{j i}^{-1} \frac{\partial W^{g}}{\partial y_{j}}
  \end{equation*}
  by Lemma~\ref{lem:sector-generator}.
  This shows that $I_{\nabla W^{g}}$ is closed under the action of $C_{G}(g)$ and hence $R^{g}/I_{\nabla W^{g}}$ is
  also a $C_{G}(g)$-module.

  Let $\Omega_{g}$ be a one dimensional vector space generated by a symbol $d y_{i_{1}} \wedge \dots \wedge d y_{i_{n_{g}}}$.
  Define the action of $h \in C_{G}(g)$ on $\Omega_{g}$ by
  \begin{equation*}
    h {\cdot} (d y_{i_{1}} \wedge \dots \wedge d y_{i_{n_{g}}}) = \det(h \vert_{V^{g}}) d y_{i_{1}} \wedge \dots \wedge d y_{i_{n_{g}}}.
  \end{equation*}
  and extending linearly.
  $\Omega_{g}$ is equivalent to $\bigwedge\hspace{-3pt}^{n_{g}} V^{g}$ as a $G$-module.
  If $n^{g}$ is zero, we regard $\Omega_{g}$ as a one dimensional trivial representation of $C_{G}(g)$.

  The unprojected twisted sector of the Landau-Ginzburg orbifold for $W^{g}$ is the Jacobi ring of $W^{g}$ tensored with
  $\Omega_{g}$ and the (projected) twisted sector is its $C_{G}(g)$-invariant subspace.

  \begin{definition}
    \label{def:twisted-sector}
    For $g \in G$, the \emph{unprojected $g$ twisted sector} $\mathcal{Q}_{W, g}$ of a Landau-Ginzburg orbifold is defined to be the $\C$-vector space
    \begin{equation*}
      \mathcal{Q}_{W, g}
      = \mathcal{Q}_{W^{g}} \otimes_{\C} \Omega_{g}
      = R^{g}/I_{\nabla W^g} \otimes_{\C} \Omega_{g}.
    \end{equation*}
    The \emph{left charge} and \emph{right charge} of $f \otimes v \in \mathcal{Q}_{W, g}$ are defined by $\text{charge}(f) + \age(g) - \sum_{\lambda_{i} \neq 1}q_{i}$ and $\text{charge}(f) + \age(g^{-1}) - \sum_{\lambda_{i} \neq 1}q_{i}$ respectively.
    The left and right charge make $\mathcal{Q}_{W, g}$ a bigraded $\C$-vector space.
    In addition, we define the (projected) \emph{$g$ twisted sector} $\mathscr{H}_{W, G, g}$ as
    the invariant subspace of $\mathcal{Q}_{W, g}$ with respect to the action of the centralizer
    $C_G(g)$ of $g$, i.e.
    \begin{equation*}
      \mathscr{H}_{W, G, g}
      = \mathcal{Q}_{W, g}^{C_G(g)}
      = \big( R^{g}/I_{\nabla W^g} \otimes_{\C} \Omega_{g} \big)^{C_G(g)}.
    \end{equation*}
    Denote the homogeneous subspace with the left and right charge $(p, q)$ of $\mathscr{H}_{W, G, g}$ by $\mathscr{H}_{W, G, g}^{p, q}$.
    The \emph{Hodge number} $h_{g}^{p, q}(W, G)$ of the twisted sector $\mathscr{H}_{W, G, g}$ is defined by
    \begin{equation*}
      h_{g}^{p, q}(W, G) \defeq \dim_{\C} \mathscr{H}_{W, G, g}^{p, q}.
    \end{equation*}
  \end{definition}

  The left and right charge are well-defined.
  Indeed, the gradient ideal $I_{\nabla W^{g}}$ is a homogeneous ideal since each generator $\partial W^{g} / \partial y_{i}$ is
  a homogeneous polynomial with charge $1 - q_{i}$.

  \begin{example}
    The unprojected $E_{n}$-twisted sector is a tensor product of the Jacobi ring $\mathcal{Q}_{W}$ and $\C d x_{1} \wedge \dots \wedge d x_{n}$.
    If $G$ is a subgroup of $\SL_{n}(\C)$, $\Omega_{n}$ is a trivial representation of $C_{G}(E_{n}) = G$.
    In particular, $\mathscr{H}_{W, G, E_{n}}$ contains an element of the form $1 \otimes d x_{1} \wedge \dots \wedge d x_{n}$.
    The existence of this element seems to be important if we try to define a product structure on the Landau-Ginzburg orbifold
    since this is a candidate for the identity element~\cite{krawitz2010}.
    The left and right charges coincide and are specified by the charge of a polynomial
    since $\age(E_{n}) - \sum_{\lambda_{i} \neq 1} q_{i} = 0$.
  \end{example}

  \begin{example}
    Let $g$ be an element of $G$ such that $n_{g} = 0$.
    Then $R^{g}/I_{W^{g}}$ and $\Omega_{g}$ are both trivial $C_{G}(g)$-modules hence $\mathcal{Q}_{W, G, g}$ and $\mathscr{H}_{W, G, g}$ are also trivial.
    The left and right charge are $\age(g) - \sum_{i=1}^{n}q_{i}$ and $\age(g^{-1}) - \sum_{i=1}^{n} q_{i}$ respectively.
  \end{example}

  \begin{lemma}
    \label{lem:twisted_sector_conjugate_invariant}
    An (unprojected) twisted sector does not depend on a choice of a representative of a conjugacy class.
    More precisely, if $g$ and $g'$ are conjugate in $G$, then we have $\mathcal{Q}_{W, g} \simeq \mathcal{Q}_{W, g'}$
    and $\mathscr{H}_{W, G, g} \simeq \mathscr{H}_{W, G, g'}$ as bigraded vector spaces.
  \end{lemma}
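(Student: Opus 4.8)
\section*{Proof proposal}

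The plan is to realize the desired isomorphism as the one induced by the conjugating element itself. Write $g' = h g h^{-1}$ with $h \in G$, and let $\psi \colon V \to V$ denote the linear automorphism given by the action of $h$. Since conjugate matrices share the same eigenvalues with multiplicity, $g$ and $g'$ have identical eigenvalue multisets; in particular $n_g = n_{g'}$, and $\age(g) = \age(g')$, $\age(g^{-1}) = \age(g'^{-1})$. Moreover $\psi$ intertwines $g$ and $g'$, so it carries each eigenspace of $g$ isomorphically onto the eigenspace of $g'$ for the same eigenvalue; in particular $\psi$ restricts to an isomorphism $\psi|_{V^g}\colon V^g \xrightarrow{\sim} V^{g'}$ and sends the span of the $\lambda_i \neq 1$ eigendirections of $g$ onto the corresponding span for $g'$. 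Because $h$ lies in $G \subset G_W$ its matrix is block diagonal with respect to the charge grading, so $\psi$ preserves charges; consequently the two deleted sums $\sum_{\lambda_i \neq 1} q_i$ agree for $g$ and $g'$.

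First I would transport the Jacobi factor. The pullback $\psi^{*}\colon R^{g'} \to R^{g}$, $f \mapsto f \circ \psi|_{V^g}$, is an isomorphism of graded $\C$-algebras preserving charge. Using the $h$-invariance of $W$ in the form $W \circ \psi = W$ and restricting to $V^g$, one gets $\psi^{*}(W^{g'}) = W^{g}$; hence $\psi^{*}$ carries $I_{\nabla W^{g'}}$ onto $I_{\nabla W^{g}}$ and descends to a charge-preserving isomorphism $\mathcal{Q}_{W^{g'}} \xrightarrow{\sim} \mathcal{Q}_{W^{g}}$. Simultaneously, $\bigwedge^{n_g}(\psi|_{V^g})$ gives an isomorphism $\Omega_g \xrightarrow{\sim} \Omega_{g'}$ of the one-dimensional determinant spaces. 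Tensoring the inverse of the former with the latter produces a linear isomorphism $\Phi \colon \mathcal{Q}_{W,g} \xrightarrow{\sim} \mathcal{Q}_{W,g'}$.

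Next I would check that $\Phi$ respects the bigrading. For $f \otimes v$ the left charge is $\text{charge}(f) + \age(g) - \sum_{\lambda_i \neq 1} q_i$; under $\Phi$ the charge of the polynomial factor is unchanged (as $\psi^{*}$ is charge-preserving) while $\age(g) = \age(g')$ and the deleted sum is the same, so the left charge is preserved, and the identical argument with $g^{-1}, g'^{-1}$ handles the right charge. Thus $\Phi$ is an isomorphism of bigraded vector spaces, proving $\mathcal{Q}_{W,g} \simeq \mathcal{Q}_{W,g'}$.

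Finally, for the projected sectors I would verify equivariance. Conjugation by $h$ is an isomorphism $C_G(g) \xrightarrow{\sim} C_G(g')$, $c \mapsto h c h^{-1}$, and I expect $\Phi$ to intertwine the $c$-action on $\mathcal{Q}_{W,g}$ with the $(hch^{-1})$-action on $\mathcal{Q}_{W,g'}$; granting this, $\Phi$ restricts to an isomorphism of invariant subspaces $\mathscr{H}_{W,G,g} = \mathcal{Q}_{W,g}^{C_G(g)} \xrightarrow{\sim} \mathcal{Q}_{W,g'}^{C_G(g')} = \mathscr{H}_{W,G,g'}$, again preserving the bigrading. The main obstacle is precisely this equivariance step: one must track the action conventions and the determinant twist on $\Omega_g$ carefully, since the actions of $c$ and of $h$ interact through $\psi$, and a misplaced inverse or transpose would break the intertwining. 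Everything else reduces to the conjugation-invariance of eigenvalues and the $G_W$-invariance of $W$.
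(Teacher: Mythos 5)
Your proposal is correct and follows essentially the same route as the paper: both transport the twisted sector through the conjugating element $h$, your map $\Phi$ differing from the paper's only by the harmless overall scalar $\det(h^{-1}\vert_{V^{g}})$ on the one-dimensional factor $\Omega_{g}$. The equivariance step you hedge on is exactly what the paper dismisses as ``clear,'' and it does go through: on the polynomial factor it is the standard conjugation computation, while on $\Omega_{g}$ and $\Omega_{g'}$ the elements $c$ and $h c h^{-1}$ act by the equal scalars $\det(c\vert_{V^{g}}) = \det\big((h c h^{-1})\vert_{V^{g'}}\big)$, so any linear isomorphism of these lines intertwines the actions.
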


  \begin{proof}
    From the assumption, there is an element $h$ in $G$ s.t. $h g h^{-1} = g'$.
    If $y_{1}, \dots, y_{n_{g}}$ is a basis of $V^{g}$, $y_{i}' = h {\cdot} y_{i}$, $1 \leq i \leq n_{g}$
    is a basis of $V^{g'}$.
    $h$ induces a bigrading preserving isomorphism $\mathcal{Q}_{W, g} \to \mathcal{Q}_{W, g'}$ by
    \begin{equation*}
      f \otimes d y_{1} \wedge \dots \wedge d y_{n_{g}} \mapsto \det(h^{-1} \vert_{V^{g}}) f' \otimes d y_{1}' \wedge \dots \wedge d y_{n_{g}}'
    \end{equation*}
    where $f'$ is a polynomial such that $f'(y_{1}', \dots, y_{n_{g}}') = f(y_{1}, \dots, y_{n_{g}})$.
    Furthermore, this isomorphism clearly commutes with an action of $C_{G}(g) \simeq C_{G}(g')$.
    Hence invariant subspaces are isomorphic, that is $\mathscr{H}_{W, G, g} \simeq \mathscr{H}_{W, G, g'}$.
  \end{proof}

  We have completed all preparations to define a Landau-Ginzburg orbifold.
  The Landau-Ginburg orbifold of the pair $(W, G)$ is a direct sum over conjugacy classes of $G$ of the projected
  twisted sectors.
  By Lemma~\ref{lem:twisted_sector_conjugate_invariant}, this does not depend on a choice of representatives of conjugacy
  classes.

  \begin{definition}
    Let $W(x_1, \dots, x_n)$ be a nondegenerate quasihomogeneous polynomial with unique weights,
    $G$ be a finite subgroup of the symmetry group $G_{W}$.
    and $S \subset G$ be a set of representatives of the conjugacy classes of $G$.
    The \emph{Landau-Ginzburg orbifold} $\mathscr{H}_{W, G}$ for the pair $(W, G)$ is defined by
    \begin{equation*}
      \mathscr{H}_{W, G}
        = \bigoplus_{g \in S} \mathscr{H}_{W, G, g}
        = \bigoplus_{g \in S} \big(R^{g} / I_{\nabla W^{g}} \otimes_{\C} \Omega_{g} \big)^{C_G(g)}.
    \end{equation*}
    The bigrading of $\mathscr{H}_{W, G}$ is induced from that of each summand.
    We denote the homogeneous subspace with the left and right charge $(p, q)$ by $\mathscr{H}_{W, G}^{p, q}$.
    We have a decomposition of the $\C$-vector space
    \begin{equation*}
      \mathscr{H}_{W, G} = \bigoplus_{p, q \in \Q} \mathscr{H}_{W, G}^{p, q}.
    \end{equation*}
    The \emph{Hodge number} $h^{p, q}(W, G)$ of the pair $(W, G)$ is defined to be
    \begin{equation*}
      h^{p, q}(W, G) \defeq \dim_{\C} \mathscr{H}_{W, G}^{p, q}
    \end{equation*}
    where $p$ and $q$ are rational numbers.
  \end{definition}

  \begin{lemma}
    \label{lem:twisted_sector_similar_invariant}
    Let $x_{1}, \dots, x_{n}$ and $x_{1}', \dots, x_{n}'$ are different coordinate systems, namely
    $x_{i}' = \phi {\cdot} x_{i}$ where $\phi$ is an element of $\GL_{n}(\C)$.
    Put $W'(x'_{1}, \dots, x'_{n}) = W(x_{1}, \dots, x_{n})$ and $G' = \phi G \phi^{-1}$.
    Then we have $\mathcal{Q}_{W, g} \simeq \mathcal{Q}_{W', \phi g \phi^{-1}}$ and
    $\mathscr{H}_{W, G, g} \simeq \mathscr{H}_{W', G', \phi g \phi^{-1}}$ as bigraded vector spaces for any $g \in G$.
    In other words, an (unprojected) twisted sector does not depend on a choice of the coordinates.
  \end{lemma}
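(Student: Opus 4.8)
The plan is to mimic the proof of Lemma~\ref{lem:twisted_sector_conjugate_invariant}, replacing conjugation by an element of $G$ with the coordinate change $\phi$. Write $g' = \phi g \phi^{-1}$. The first step is to observe that $\phi$, viewed as a linear automorphism of $V$, restricts to an isomorphism $V^{g} \to V^{g'}$: indeed $g' w = w$ if and only if $g(\phi^{-1} w) = \phi^{-1} w$, so $w \in V^{g'}$ exactly when $\phi^{-1} w \in V^{g}$, i.e.\ $V^{g'} = \phi(V^{g})$. Consequently, if $y_{1}, \dots, y_{n}$ are eigenvectors of $g$ with eigenvalues $\lambda_{1}, \dots, \lambda_{n}$, then $y_{i}' \defeq \phi {\cdot} y_{i}$ are eigenvectors of $g'$ with the \emph{same} eigenvalues, and $\{y_{i}'\}_{i \in I^{g}}$ is a basis of $V^{g'}$. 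In particular $n_{g} = n_{g'}$, and $I^{g}$ may serve as the index set $I^{g'}$ as well.

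Next I would identify the two unprojected sectors. The coordinate change induces a $\C$-algebra isomorphism $R^{g} \to R^{g'}$, $f \mapsto f'$ with $f'(y') = f(y)$. Since $W' = W$ as functions on $V$ and the restriction-to-fixed-space operation commutes with $\phi|_{V^{g}}$, one checks that $W^{g}$ is carried to $(W')^{g'}$; by Remark~\ref{rem:jacobi-ring-invariant-singularity} (invariance of the Jacobi ring under a linear change of coordinates) this sends $I_{\nabla W^{g}}$ to $I_{\nabla (W')^{g'}}$ and hence yields $\mathcal{Q}_{W^{g}} \simeq \mathcal{Q}_{(W')^{g'}}$. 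The exterior power $\bigwedge^{n_{g}}(\phi|_{V^{g}})$ furnishes an isomorphism $\Omega_{g} \to \Omega_{g'}$ sending $dy_{i_{1}} \wedge \dots \wedge dy_{i_{n_{g}}}$ to $dy_{i_{1}}' \wedge \dots \wedge dy_{i_{n_{g}}}'$. Tensoring the two produces the candidate map $\Phi \colon \mathcal{Q}_{W, g} \to \mathcal{Q}_{W', g'}$, of exactly the shape appearing in Lemma~\ref{lem:twisted_sector_conjugate_invariant} (up to the same determinant normalization, which here is the nonzero scalar $\det(\phi^{-1}|_{V^{g}})$ and affects neither bijectivity nor the grading).

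It then remains to verify the two structural properties. For the bigrading, the charges of $f$ and $f'$ agree because $\phi$ respects the weight grading, so $\text{charge}$ is preserved; moreover $\age(g) = \age(g')$ and $\age(g^{-1}) = \age((g')^{-1})$ since $g$ and $g'$ share their eigenvalues, and the sum $\sum_{\lambda_{i} \neq 1} q_{i}$ is unchanged for the same reason. Hence $\Phi$ preserves both the left and the right charge. For the group actions, note $C_{G'}(g') = \phi\, C_{G}(g)\, \phi^{-1}$, and the functoriality of the constructions above shows that $\Phi$ intertwines the $C_{G}(g)$-action on $\mathcal{Q}_{W,g}$ with the $C_{G'}(g')$-action on $\mathcal{Q}_{W',g'}$ (using, as in the computation following Lemma~\ref{lem:sector-generator}, that the induced action on the generators $\partial W^{g}/\partial y_{i}$ transforms correctly). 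Restricting $\Phi$ to invariants then gives $\mathscr{H}_{W,G,g} \simeq \mathscr{H}_{W',G',g'}$.

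I expect the only genuinely delicate point to be the bigrading claim: it forces $\phi$ to be compatible with the weight decomposition (equivalently, $W'$ must again be quasihomogeneous with the same charges), which is implicit in the word ``coordinate systems'' but worth making explicit, since for a general $\phi \in \GL_{n}(\C)$ the polynomial $W'$ need not be quasihomogeneous and the left/right charges would fail to be preserved. Once weight-compatibility is granted, every other step is the routine functorial bookkeeping already carried out for conjugation in Lemma~\ref{lem:twisted_sector_conjugate_invariant}.
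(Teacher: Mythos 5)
Your proposal is correct and takes essentially the same route as the paper: the paper's proof simply states that the argument is ``almost the same'' as Lemma~\ref{lem:twisted_sector_conjugate_invariant} and writes down the map $f \otimes d y_{1} \wedge \dots \wedge d y_{n_{g}} \mapsto \det(\phi \vert_{V^{g}})\, f' \otimes d y_{1}' \wedge \dots \wedge d y_{n_{g}}'$ with $y_{i}' = \phi {\cdot} y_{i}$, which is exactly the isomorphism you construct (the scalar normalization differs by $\det(\phi\vert_{V^{g}})$ versus $\det(\phi^{-1}\vert_{V^{g}})$, which affects nothing). Your additional observation that $\phi$ must be compatible with the weight decomposition for the bigrading claim to hold is a genuine subtlety that the paper leaves implicit.
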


  \begin{proof}
    The proof is almost the same with Lemma~\ref{lem:twisted_sector_conjugate_invariant}.
    If $y_{1}, \dots, y_{n_{g}}$ is a basis of $V^{g}$,
    An isomorphism $\mathcal{Q}_{W, g} \to \mathcal{Q}_{W', \phi g \phi}$ is given by
    \begin{equation*}
      f \otimes d y_{1} \wedge \dots \wedge d y_{n_{g}} \mapsto \det(\phi \vert_{V^{g}}) f' \otimes d y_{1}' \wedge \dots \wedge d y_{n_{g}}'
    \end{equation*}
    where $y_{i}' = \phi {\cdot} y_{i}$ and $f'$ is a polynomial s.t. $f'(y_{1}', \dots, y_{n_{g}}') = f(y_{1}, \dots, y_{n_{g}})$.
  \end{proof}

  The following proposition is generalization of the property of a Jacobi ring stated in Remark~\ref{rem:jacobi-ring-invariant-singularity}

  \begin{proposition}
    A Landau-Ginzburg orbifold does not depend on a choice of the coordinates.
  \end{proposition}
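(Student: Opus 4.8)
The plan is to reduce the statement entirely to the per-sector coordinate independence already recorded in Lemma~\ref{lem:twisted_sector_similar_invariant}, so that the only new content is bookkeeping at the level of conjugacy classes. First I would fix notation for a coordinate change: take $\phi \in \GL_n(\C)$ implementing the change $x_i' = \phi{\cdot}x_i$ (and, so that $(W',G')$ is again an admissible pair, chosen to preserve the quasihomogeneous grading, i.e.\ to commute with the $\Cx$-action), set $W'(x_1',\dots,x_n') = W(x_1,\dots,x_n)$, and put $G' = \phi G \phi^{-1}$. Since $G$ is a finite subgroup of $G_W$ and $\phi$ is invertible, $G'$ is a finite subgroup of the symmetry group $G_{W'}$, so $\mathscr{H}_{W',G'}$ is itself a well-defined Landau-Ginzburg orbifold. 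The content of the proposition is then the existence of a bigrading-preserving isomorphism $\mathscr{H}_{W,G} \simeq \mathscr{H}_{W',G'}$.

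Next I would observe that conjugation $c_\phi \colon g \mapsto \phi g \phi^{-1}$ is a group isomorphism from $G$ onto $G'$, and therefore carries the conjugacy classes of $G$ bijectively onto the conjugacy classes of $G'$. Consequently, if $S \subset G$ is a set of representatives of the conjugacy classes of $G$, then $S' \defeq \set{\phi g \phi^{-1} : g \in S}$ is a set of representatives of the conjugacy classes of $G'$. By Lemma~\ref{lem:twisted_sector_conjugate_invariant} applied to the pair $(W',G')$, the orbifold $\mathscr{H}_{W',G'}$ does not depend on which representatives are chosen, so I am free to evaluate it on the particular representative set $S'$.

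The final step is to assemble the summands. For each $g \in S$, Lemma~\ref{lem:twisted_sector_similar_invariant} supplies a bigrading-preserving isomorphism $\mathscr{H}_{W,G,g} \simeq \mathscr{H}_{W',G',\phi g \phi^{-1}}$. Taking the direct sum over $g \in S$ and using that $c_\phi$ restricts to a bijection $S \to S'$, I obtain
\[
  \mathscr{H}_{W,G} = \bigoplus_{g \in S} \mathscr{H}_{W,G,g}
  \simeq \bigoplus_{g \in S} \mathscr{H}_{W',G',\phi g \phi^{-1}}
  = \bigoplus_{g' \in S'} \mathscr{H}_{W',G',g'}
  = \mathscr{H}_{W',G'},
\]
a direct sum of bigrading-preserving isomorphisms, hence itself bigrading preserving.

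I do not anticipate a serious obstacle, since the substantive part of the argument---the compatibility of the Jacobi rings $R^g/I_{\nabla W^g}$, the one-dimensional spaces $\Omega_g$, and the left/right charges with a linear change of coordinates---is precisely what Lemma~\ref{lem:twisted_sector_similar_invariant} already establishes. The only points requiring care are, first, confirming that $c_\phi$ matches conjugacy classes of $G$ with those of $G'$ (and not merely classes internal to $G$), which is immediate once one notes that $c_\phi$ is an isomorphism \emph{onto} $G'$ and thus makes the reindexing of the direct sum legitimate; and second, invoking Lemma~\ref{lem:twisted_sector_conjugate_invariant} for $(W',G')$ so that the value $\mathscr{H}_{W',G'}$ may be computed on the representative set $S'$ produced by $\phi$ rather than on some a priori unrelated set.
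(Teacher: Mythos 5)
Your proposal is correct and takes essentially the same route as the paper: the paper's proof simply invokes Lemma~\ref{lem:twisted_sector_similar_invariant} sector by sector and leaves the conjugacy-class bookkeeping implicit, while you spell out exactly that bookkeeping (conjugation by $\phi$ matching representative sets, plus Lemma~\ref{lem:twisted_sector_conjugate_invariant} applied to $(W',G')$ to legitimize the reindexed direct sum). No gap; your version is just a more explicit write-up of the paper's one-line argument.
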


  \begin{proof}
    Each projected twisted sector is invariant under a linear change of the coordinates by Lemma~\ref{lem:twisted_sector_similar_invariant},
    which follows the result.
  \end{proof}

  \begin{example}
    \label{ex:type_An_orbifold}
    As an example, consider the Landau-Ginzburg orbifold of type $A_{n}$ singularities (recall Example~\ref{ex:type-An}).
    Let $W = x^{n}$ for $n \geq 2$.
    An element of $\Cx$ acts on $x$ by scalar multiplication, and we see that the maximal symmetry group $G_{W}$ is
    \begin{equation*}
      G_{W} = \set{ z \in \Cx | z^{n} = 1} = \set{\zeta_{n}^{i} | 0 \leq i \leq n - 1}
    \end{equation*}
    where $\zeta_{n}$ means the primitive $n$-th root of unity.
    Let us construct the Landau-Ginzburg orbifold of the pair $(W, G_{W})$.
    Since $G_{W}$ is an abelian group, any conjugacy class consists of one element and a centralizer is whole $G_{W}$
    for an arbitrary element.
    $n_{g}$ is $1$ if $g = 1$ and $0$ otherwise, therefore we have the unprojected twisted sectors
    \begin{equation*}
      \mathcal{Q}_{W, g} =
      \begin{cases}
        \C[x]/(x^{n-1}) \otimes \C d x& \text{if $g = 1$,} \\
        \Omega_{g} \simeq \C & \text{otherwise.} \\
      \end{cases}
    \end{equation*}
    The $G_{W}$-invariant subspace of these vector spaces are the projected twisted sectors $\mathscr{H}_{W, G_{W}, g}$.
    $\mathcal{Q}_{W, 1}$ has a basis of the form $x^{i} \otimes d x$, $0 \leq i \leq n-2$ which diagonalize
    the action of $G_{W}$.
    Observing
    \begin{equation*}
      \zeta_{n} {\cdot} (x^{i} \otimes d x) = \zeta_{n}^{i+1} x^{i} \otimes d x \neq x^{i} \otimes d x
    \end{equation*}
    for all $0 \leq i \leq n - 2$, we obtain $\mathscr{H}_{W, G_{W}, 1} = 0$.
    For $g \neq 1$, the projected twisted sectors are the same as the unprojected ones.
    From these result, it follows that the Landau-Ginzburg orbifold $\mathscr{H}_{W, G_{W}}$ for the pair $(W, G_{W})$ is
    \begin{equation*}
      \mathscr{H}_{W, G_{W}} = \bigoplus_{i=1}^{n-1} \Omega_{\zeta_{n}^{i}} \simeq \bigoplus_{i=1}^{n-1} \C.
    \end{equation*}
    The Hodge numbers $h^{p, q}(W, G_{W})$ are
    \begin{equation*}
      h^{p, q}(W, G_{W}) =
      \begin{cases}
        1 & \text{if $(p, q) = \big(\frac{i-1}{n}, \frac{n-i-1}{n} \big)$ for $1 \leq i \leq n-1$,} \\
        0 & \text{otherwise.} \\
      \end{cases}
    \end{equation*}

    If $n$ is not a prime, $G_{W}$ has a nontrivial subgroup.
    We will calculate the Hodge numbers of the Landau-Ginzburg orbifolds of such subgroups in Example~\ref{ex:type_An_Hodge_number}
    using the formula of the generating function of the Hodge numbers proved in the next section.
  \end{example}

\section{Properties of the Hodge numbers}
  \label{sec:generating-function}
  In the previous section, we have defined the Landau-Ginzburg orbifold and its Hodge numbers.
  In this section, we prove a formula for the generating function of these Hodge numbers.
  Using this formula, we see some properties of Hodge numbers.

  \begin{definition}
    Let $W$ be a nondegenerate quasihomogeneous polynomial with unique weights and $G$ be a finite subgroup of
    the symmetry group $G_{W}$.
    We define the \emph{Poincaré polynomial} of $\mathscr{H}_{W, G, g}$ for $g \in G$ by
    \begin{equation*}
      P_{g}(W, G; u, v) = \sum_{p,q \in \Q} h_{g}^{p,q}(W, G) u^{p} v^{q}
    \end{equation*}
    and similarly the \emph{Poincaré polynomial} of the pair $(W, G)$ by
    \begin{equation*}
      P(W, G; u, v) = \sum_{p,q \in \Q} h^{p,q}(W, G) u^p v^q.
    \end{equation*}
  \end{definition}

  \begin{proposition}
    \label{prop:proposition-for-generating-function}
    The Poincaré polynomial $P_{E_{n}}(W, G; u, v)$ of $\mathscr{H}_{W, G, E_{n}}$ is given by
    \begin{equation*}
      P_{E_{n}}(W, G; u, v) = \frac{1}{\#G} \sum_{g \in G} \prod_{i=1}^{n}
        \frac{\lambda_i - (u v)^{1-q_{i}}}{1 - \lambda_i(u v)^{q_i}}
    \end{equation*}
    where $\lambda_{i}$ is an eigenvalue of $g$ belonging to an eigenvector with the charge $q_{i}$.
  \end{proposition}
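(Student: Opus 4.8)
The plan is to realize $P_{E_n}(W,G;u,v)$ as a graded equivariant Euler characteristic and to evaluate it through a Koszul resolution of the Jacobi ring. First I would unwind the definitions in the $E_n$-sector. Since $E_n$ is central we have $C_G(E_n)=G$, and since every eigenvalue of $E_n$ equals $1$ we get $\age(E_n)=0$ and $\sum_{\lambda_i\neq 1}q_i=0$; hence the left and right charges of $f\otimes\omega\in\mathscr{H}_{W,G,E_n}$ both equal $\operatorname{charge}(f)$. Consequently $P_{E_n}$ depends only on $t\defeq uv$, and $\mathscr{H}_{W,G,E_n}=(\mathcal{Q}_W\otimes_{\C}\Omega_{E_n})^{G}$, where $h\in G$ acts on the one-dimensional space $\Omega_{E_n}=\bigwedge^{n}V$ by the scalar $\det(h)$.

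The second step is to pass to an averaged trace. Writing the projector onto $G$-invariants as $\tfrac{1}{\#G}\sum_{h\in G}h$ and taking its graded trace on $\mathcal{Q}_W\otimes_{\C}\Omega_{E_n}$ gives
\begin{equation*}
  P_{E_n}(W,G;u,v)=\frac{1}{\#G}\sum_{h\in G}\det(h)\,\chi(h;t),\qquad
  \chi(h;t)\defeq\sum_{p}\operatorname{tr}\!\big(h\mid(\mathcal{Q}_W)_p\big)\,t^{p},
\end{equation*}
where $(\mathcal{Q}_W)_p$ is the charge-$p$ homogeneous piece. It therefore remains to compute the equivariant graded Poincaré series $\chi(h;t)$ for each $h\in G$.

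For this I would use the Koszul complex. Because $h$ respects the charge grading and has finite order, I may diagonalize it within each charge-block, choosing eigenvectors $y_i$ with $h\cdot y_i=\lambda_i y_i$ of charge $q_i$; the classical Molien computation then gives the equivariant series of the polynomial ring as $\prod_{i=1}^{n}(1-\lambda_i t^{q_i})^{-1}$. Since $W$ is nondegenerate, the Jacobi ring is zero-dimensional, so $\partial W/\partial y_1,\dots,\partial W/\partial y_n$ form a regular sequence in the Cohen--Macaulay ring $\C[y_1,\dots,y_n]$, and the associated Koszul complex is a $G$-equivariant graded free resolution of $\mathcal{Q}_W$. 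The derivative--action relation already recorded in the text shows $h\cdot(\partial W/\partial y_i)=\lambda_i^{-1}\,\partial W/\partial y_i$, so the $i$-th Koszul generator carries eigenvalue $\lambda_i^{-1}$ and charge $1-q_i$. Taking the graded equivariant Euler characteristic along the resolution multiplies the polynomial-ring series by the exterior-algebra factor $\prod_{i=1}^{n}(1-\lambda_i^{-1}t^{1-q_i})$, yielding
\begin{equation*}
  \chi(h;t)=\prod_{i=1}^{n}\frac{1-\lambda_i^{-1}t^{1-q_i}}{1-\lambda_i t^{q_i}}.
\end{equation*}

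Finally I would insert $\det(h)=\prod_{i=1}^{n}\lambda_i$ and clear it into the numerator via $\lambda_i(1-\lambda_i^{-1}t^{1-q_i})=\lambda_i-t^{1-q_i}$. Substituting $t=uv$ turns the summand into $\prod_{i}(\lambda_i-(uv)^{1-q_i})/(1-\lambda_i(uv)^{q_i})$, which is the asserted formula. The step I expect to be the crux is the equivariant, charge-graded bookkeeping of the Koszul resolution: one must simultaneously verify that the $\partial W/\partial y_i$ form a regular sequence (so the complex is exact and the Euler characteristic computes the quotient) and that the $i$-th generator transforms with eigenvalue $\lambda_i^{-1}$ and charge $1-q_i$, so that the exterior-algebra contribution assembles into the stated product.
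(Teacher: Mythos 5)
Your proposal is correct and follows essentially the same route as the paper: both resolve the Jacobi ring by the $G$-equivariant Koszul complex (with the $i$-th generator transforming by $\lambda_i^{-1}$ in charge $1-q_i$), both average traces over $G$ to extract invariants, and both absorb the $\det(h)$ factor from $\Omega_{E_n}$ into the numerator; the only difference is the order of the two sums (you average over $G$ after taking the Koszul Euler characteristic, the paper takes invariants termwise first), which is immaterial.
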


  \begin{proof}
    Let $R = \C[x_{1}, \dots, x_{n}]$ and $V = \C x_{1} \oplus \dots \oplus \C x_{n}$.
    By definition, $\mathscr{H}_{W, G, E_{n}}$ is a fixed subspace of the tensor product of
    Jacobi ring $R/I_{\nabla W}$ of $W$ and $\Omega_{E_{n}}$.
    Since the generators $(\partial W / \partial x_{1}, \dots, \partial W / \partial x_{n})$
    of $I_{\nabla W}$ form a regular sequence, we have the exact sequence of $R$-modules
    \begin{equation*}
      0 \to \ExteriorProduct{n}{R^{n}} \xrightarrow{d_{n}} \dots \xrightarrow{d_{2}} R^{n}
      \xrightarrow{d_{1}} R \xrightarrow{\pi} R/I_{\nabla W} \to 0.
    \end{equation*}
    from the Koszul complex.
    Here $R^{n}$ is a free $R$-module generated by the symbols $e_{1}, \dots, e_{n}$,
    $\pi$ is a canonical projection and $d_{p}$ sends $e_{i_{1}} \wedge \dots \wedge e_{i_{p}}$ to
    \begin{equation*}
      \sum_{k=1}^{p}(-1)^{k+1}\frac{\partial W}{\partial x_{i_{p}}} e_{i_{1}} \wedge \dots \wedge
      \widehat{e_{i_{p}}} \wedge \dots \wedge e_{i_{n}}
    \end{equation*}
    where the hat means the omission of the symbol.
    Now we would like to regard this as the exact sequence of $G$-modules.
    Define the action of $g \in G$ to $e_{i}$ by $g {\cdot} e_{i} = \sum_{j = 1}^{n} (g^{-1})_{j i} e_{j}$.
    Note that this action is the same with the one to $\partial W / \partial x_{i}$.
    Therefore we have the exact sequence of $G$-modules.
    Tensoring $\Omega_{E_{n}}$ to all objects, we have the exact sequence of $G$-modules
    \begin{equation*}
      0 \to \ExteriorProduct{n}{R^{n}} \otimes \Omega_{E_{n}} \to \dots \to
      R^{n} \otimes \Omega_{E_{n}} \to R \otimes \Omega_{E_{n}} \to
      \mathcal{Q}_{W, E_{n}} \to 0.
    \end{equation*}
    Next, we define the degree of each elements by $\degree(x_{i}) = w_{i}$ and $\degree(e_{i}) = d - w_{i}$.
    Then each morphism is a degree preserving map and each object is a $\Z_{\geq 0}$-graded $\C$-vector space.
    Note that this degree for $\mathscr{H}_{W, G, E_{n}}$ is the one of
    Definition~\ref{def:twisted-sector} times $d$.
    Restricting to $G$-invariant subspaces, we have an exact sequence of $\Z_{\geq 0}$-graded $\C$-vector spaces
    \begin{equation*}
      0 \to (\ExteriorProduct{n}{R^{n}} \otimes \Omega_{E_{n}})^{G} \to \dots \to
      (R^{n} \otimes \Omega_{E_{n}})^{G} \to (R \otimes \Omega_{E_{n}})^{G} \to
      \mathscr{H}_{W, G, E_{n}} \to 0.
    \end{equation*}
    Denote the Hilbert function by $H(X, \cdot) \colon \Z_{\geq 0} \to \Z_{\geq 0}$ and the Hilbert series
    by $F(X, t) = \sum_{k = 0}^{\infty} H(X, k) t^{k}$ for a $\Z_{\geq 0}$-graded vector space $X$.
    If $X$ is a $G$-module, we see from the rudiments of the representation theory of finite groups
    \begin{equation*}
      H(X, k) = \frac{1}{\# G} \sum_{g \in G} \tr_{X_{k}}(g)
    \end{equation*}
    where $X_{k}$ means the degree $k$ component of $X$.
    By definition of the degree, we have
    \begin{align*}
      & F((\ExteriorProduct{p}{R^{n}} \otimes \Omega_{E_{n}})^{G}, t) \\
      & = \frac{1}{\# G} \sum_{g \in G} \frac{\lambda_{1} \dots \lambda_{n}}{(1-\lambda_{1} t^{w_{1}})
        \dots (1 - \lambda_{n} t^{w_{n}})} \sum_{i_{1} < \dots < i_{p}} t^{\sum_{j = 1}^{p}(d - w_{i_{j}})}
        \lambda_{i_1}^{-1} \dots \lambda_{i_{p}}^{-1}
    \end{align*}
    for each $p$ where $\lambda_{i}$ is an eigenvalue of $g$ belonging an eigenvector with charge $q_{i}$
    (this equation is also valid for $p = 0$ regarding the last sum as $1$).
    Therefore we have
    \begin{align*}
      & F(\mathscr{H}_{W, G, E_{n}}, t) \\
      & = \sum_{p = 0}^{n} (-1)^{p} F((\ExteriorProduct{p}{R^{n}} \otimes \Omega_{E_{n}})^{G}, t) \\
      & = \frac{1}{\# G} \sum_{g \in G} \frac{\lambda_{1} \dots \lambda_{n}}{(1-\lambda_{1} t^{w_{1}})
        \dots (1 - \lambda_{n} t^{w_{n}})} \sum_{p = 0}^{n}
        \sum_{i_{1} < \dots < i_{p}} (-1)^{p} t^{\sum_{j = 0}^{p}(d - w_{i_{j}})}
        \lambda_{i_1}^{-1} \dots \lambda_{i_{p}}^{-1} \\
      & = \frac{1}{\# G} \sum_{g \in G} \frac{\lambda_{1} \dots \lambda_{n}}{(1-\lambda_{1} t^{w_{1}})
        \dots (1 - \lambda_{n} t^{w_{n}})} (1-\lambda_{1}^{-1} t^{d - w_{1}}) \dots (1 - \lambda_{n}^{-1} t^{d-w_{n}}) \\
      & = \frac{1}{\# G} \sum_{g \in G} \frac{(\lambda_{1} - t^{d - w_{1}}) \dots (\lambda_{n} - t^{d-w_{n}})}
        {(1-\lambda_{1} t^{w_{1}}) \dots (1 - \lambda_{n} t^{w_{n}})}
        = \frac{1}{\# G} \sum_{g \in G} \prod_{i=1}^{n} \frac{\lambda_{i} - t^{d - w_{i}}}{1-\lambda_{i} t^{w_{i}}}.
    \end{align*}
    By definition of the charge of the twisted sectors, it follows from the above equation that
    \begin{equation*}
      P_{E_{n}}(W, G; u, v) = \frac{1}{\# G} \sum_{g \in G} \prod_{i=1}^{n} \frac{\lambda_{i} - (u v)^{1 - q_{i}}}{1-\lambda_{i} (u v)^{q_{i}}}. \qedhere
    \end{equation*}
  \end{proof}

  Let $g \in G$ and $h \in C_{G}(g)$.
  Since $g$ and $h$ commute, these can be simultaneously diagonalizable.
  Denote the eigenvalues of $h$ by $\lambda_{1}^{h, g}, \dots, \lambda_{n}^{h, g}$.
  The indices of these eigenvalues can be chosen so that the eigenvector corresponding to $\lambda_{i}^{h, g}$ has
  charge $q_{i}$.
  Note that we can assume that $\lambda_{i}^{h, g}$ satisfies $(\lambda_{i}^{h, g})^{-1} = \lambda_{i}^{h^{-1}, g}$.

  \begin{theorem}
    \label{thm:generalized-vafas-formula}
    Let $W \in \C[x_1, \dots, x_n]$ be a nondegenerate quasihomogeneous polynomial with unique weights,
    $G$ be a finite subgroup of the symmetry group $G_{W}$ and $S \subset G$ be any set of representatives
    of the conjugacy classes of $G$.
    The Poincaré polynomial $P(W, G; u, v)$ of the pair $(G, W)$ is given by
    \begin{equation*}
      P(W, G; u, v) = \sum_{g \in S} P_{g}(W, G; u, v)
    \end{equation*}
    where
    \begin{equation*}
      P_{g}(W, G; u, v) = \frac{1}{\#C_G(g)}
        u^{\age(g)}v^{\age(g^{-1})}(u v)^{-\sum_{\lambda_i^g \neq 1}q_i}
        \sum_{h \in C_G(g)} \prod_{\lambda_{i}^{g} = 1}
        \frac{\lambda_i^{h, g} - (u v)^{1-q_i}}{1 - \lambda_i^{h, g}(u v)^{q_i}}.
    \end{equation*}
  \end{theorem}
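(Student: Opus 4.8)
The plan is to separate the trivial combinatorial statement from the substantive formula for a single twisted sector, and to obtain the latter by reducing to the already-proven untwisted case, Proposition~\ref{prop:proposition-for-generating-function}, applied to the reduced pair $(W^g, C_G(g))$.

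First, the identity $P(W, G; u, v) = \sum_{g \in S} P_g(W, G; u, v)$ is immediate: by definition $\mathscr{H}_{W, G} = \bigoplus_{g \in S} \mathscr{H}_{W, G, g}$ with its bigrading induced summandwise, so the Poincaré polynomial is additive over the summands. That the sum is independent of the chosen set $S$ of representatives follows from Lemma~\ref{lem:twisted_sector_conjugate_invariant}.

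The heart of the argument is the formula for $P_g$. The key observation I would make is that, as a bigraded $C_G(g)$-module, the unprojected sector $\mathcal{Q}_{W, g} = R^g / I_{\nabla W^g} \otimes_{\C} \Omega_g$ coincides --- after a charge shift --- with the unprojected untwisted sector of the reduced pair $(W^g, C_G(g))$. Indeed, by Lemma~\ref{lem:sector-generator}, $W^g$ is a nondegenerate quasihomogeneous polynomial in the variables $\{y_i : i \in I^g\}$ and $C_G(g)$ is a finite subgroup of $G_{W^g}$; moreover the one-dimensional factor $\Omega_g$, on which $h \in C_G(g)$ acts by $\det(h \vert_{V^g})$, is exactly the $\Omega_{E_{n_g}}$ attached to $W^g$. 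Taking $C_G(g)$-invariants then identifies $\mathscr{H}_{W, G, g}$ with $\mathscr{H}_{W^g, C_G(g), E_{n_g}}$ up to grading. Applying Proposition~\ref{prop:proposition-for-generating-function} to $(W^g, C_G(g))$ yields
\begin{equation*}
  P_{E_{n_g}}(W^g, C_G(g); u, v) = \frac{1}{\#C_G(g)} \sum_{h \in C_G(g)} \prod_{i \in I^g} \frac{\lambda_i^{h, g} - (uv)^{1 - q_i}}{1 - \lambda_i^{h, g}(uv)^{q_i}},
\end{equation*}
and since $I^g = \{i : \lambda_i^g = 1\}$ and the eigenvalue of $h$ on the charge-$q_i$ eigenvector is $\lambda_i^{h, g}$ by the simultaneous diagonalization of the commuting pair $(g, h)$, this is precisely the sum in the claimed expression; the case $n_g = 0$ is subsumed, the empty product being $1$.

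It remains to restore the bigrading. Because the untwisted sector has equal left and right charges (both equal to $\text{charge}(f)$), the polynomial above is a function of the product $uv$ alone. By Definition~\ref{def:twisted-sector}, passing to the genuine $g$-twisted sector adds $\age(g) - \sum_{\lambda_i \neq 1} q_i$ to each left charge and $\age(g^{-1}) - \sum_{\lambda_i \neq 1} q_i$ to each right charge, so multiplication by the factor $u^{\age(g)} v^{\age(g^{-1})} (uv)^{-\sum_{\lambda_i^g \neq 1} q_i}$ implements exactly this shift and produces the stated formula for $P_g$. The only real obstacle, modest as it is, lies in this bigrading bookkeeping: one must check that the shift splits cleanly into the $u$- and $v$-exponents and that the charge-$q_i$ labeling of eigenvalues is consistently matched between Proposition~\ref{prop:proposition-for-generating-function} and the conventions fixed in the preamble to the theorem.
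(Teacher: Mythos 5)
Your proposal is correct and is essentially the paper's own argument: the paper's (one-line) proof likewise observes that, by the definition of the left and right charges in Definition~\ref{def:twisted-sector}, each projected $g$-twisted sector is the untwisted sector of the reduced pair $(W^{g}, C_{G}(g))$ shifted by the factor $u^{\age(g)}v^{\age(g^{-1})}(u v)^{-\sum_{\lambda_i^g \neq 1}q_i}$, and then invokes Proposition~\ref{prop:proposition-for-generating-function} together with additivity over conjugacy-class representatives. Your write-up simply makes explicit the identification (via Lemma~\ref{lem:sector-generator} and the agreement of $\Omega_{g}$ with $\Omega_{E_{n_{g}}}$) and the grading bookkeeping that the paper leaves implicit.
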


  \begin{proof}
    By definition of the left and right charge of Landau-Ginzburg orbifolds, we have
    \begin{equation*}
      P(W, G; u, v) = \sum_{g \in S} u^{\age(g)}v^{\age(g^{-1})}(u v)^{-\sum_{\lambda_i^g \neq 1}q_i} P_{E_{n_{g}}}(W^{g}, C_{G}(g); u, v)
    \end{equation*}
    which proves the theorem.
  \end{proof}

  The formula of the Poincaré series in Theorem~\ref{thm:generalized-vafas-formula} indicates that
  the Hodge numbers $h^{p, q}(W, G)$ do not depend on a precise form of $W$.
  Only the existence of $W$ is required to apply Theorem~\ref{thm:generalized-vafas-formula} to $G$.

  \begin{example}
    \label{ex:type_An_Hodge_number}
    Let us calculate all possible Hodge numbers of the orbifolds of type $A_{n}$ using the formula above.
    Recall the notation in Example~\ref{ex:type_An_orbifold}.
    Notice that $G_{W}$ has a nontrivial subgroup if $n$ is not a prime number.
    Let $G$ be a subgroup of $G_{W}$ generated by $\zeta_{n}^{l}$ where $l \mid n$.
    The order of $G$ is $n/l$ and an element of $G$ can be written as $\zeta_{n}^{l i}$ for $0 \leq i \leq n - 1$.
    For the twisted sector of $1$, we see
    \begin{equation*}
      P_{1}(W, G; u, v) = \frac{1}{\# G} \sum_{i = 0}^{n/l - 1} \frac{\zeta_{n}^{l i} - (u v)^{1-\frac{1}{n}}}{1 - \zeta_{n}^{l i} (u v)^{\frac{1}{n}}}.
    \end{equation*}
    Notice that $(\zeta_{n}^{l i} - t^{1-\frac{1}{n}})/(1 - \zeta_{n}^{l i}t^{\frac{1}{n}})$, $0 \leq i \leq n/l - 1$
    are the roots of the equation
    \begin{equation*}
      (X + t^{1-\frac{1}{n}})^{n/l} - (t^{\frac{1}{n}}X+1)^{n/l} = 0
    \end{equation*}
    and hence the sum of the roots are
    \begin{equation*}
      \sum_{i = 0}^{n/l - 1} \frac{\zeta_{n}^{l i} - t^{1-\frac{1}{n}}}{1 - \zeta_{n}^{l i}t^{\frac{1}{n}}}
      = -\frac{n}{l} \frac{t^{1-\frac{1}{n}} - t^{\frac{1}{l} - \frac{1}{n}}}{1 - t^{\frac{1}{l}}} =
      \begin{dcases*}
        0 & if $l = 1$, \\
        \frac{n}{l} t^{1 - \frac{1}{n}} \sum_{i = 1}^{l-1} t^{-\frac{i}{l}} & if $l \geq 2$. \\
      \end{dcases*}
    \end{equation*}
    Therefore we have
    \begin{equation*}
      P_{1}(W, G; u, v) = (u v)^{1 - \frac{1}{n}} \sum_{i = 1}^{l} (u v)^{-\frac{i}{l}} - (u v)^{-\frac{1}{n}}.
    \end{equation*}
    For $\zeta_{n}^{l i} \in G$, $1 \leq i \leq n/l - 1$, the twisted sector is one dimensional and we see
    \begin{equation*}
      P_{\zeta_{n}^{l i}}(W, G; u, v) = u^{\frac{l i}{n}} v^{1 - \frac{l i}{n}} (u v)^{-\frac{1}{n}}
      = u^{\frac{l i - 1}{n}} v^{\frac{n - l i - 1}{n}}.
    \end{equation*}
    From the above calculations, we see that the Poincaré polynomial of the pair $(W, G)$ is
    \begin{equation*}
      P(W, G; u, v) = (u v)^{1 - \frac{1}{n}} \sum_{i = 1}^{l} (u v)^{-\frac{i}{l}} +
      \sum_{i = 1}^{n/l} u^{\frac{l i - 1}{n}} v^{\frac{n - l i - 1}{n}}
      - (u v)^{-\frac{1}{n}} - u^{\frac{n-1}{n}} v^{-\frac{1}{n}}.
    \end{equation*}
    This equation is also valid for $l = n$ which means that $G$ is a trivial group $\set{1}$.
    For example, consider the case $l = 1$, i.e. $G = G_{W}$.
    Then we have
    \begin{equation*}
      P(W, G_{W}; u, v) = \sum_{i = 1}^{n - 1} u^{\frac{i - 1}{n}} v^{\frac{n - i - 1}{n}}
    \end{equation*}
    which indeed reproduces the result of Example~\ref{ex:type_An_orbifold}.

    We can observe an interesting duality from this Poincaré polynomial.
    Let $\check{G}$ be a subgroup of $G_{W}$ generated by $\zeta_{n}^{n/l}$.
    An easy calculation yields the relation
    \begin{equation*}
      P(W, G; u, v) = u^{\Hat{c}} P(W, \check{G}; u^{-1}, v).
    \end{equation*}
    In terms of the Hodge numbers this means
    \begin{equation*}
      h^{p, q}(W, G) = h^{\Hat{c} - p, q}(W, \check{G}) \quad \text{for $p, q \in \Q$,}
    \end{equation*}
    hence the Landau-Ginzburg orbifold of $(W, \check{G})$ is a mirror model of the orbifold of $(W, G)$ and vice versa.
    The type $A_{n}$ singularity is said to be ``self mirror'' from this property.
   \end{example}

  The following two lemmas prove that the Hodge numbers of a Landau-Ginzburg orbifold have the relations
  which are analogous to Hodge symmetry and Serre duality of the Hodge numbers of a $\Hat{c}$-dimensional compact Kähler manifold.
  These explain the reason why we call $h^{p, q}(W, G)$ the Hodge number.

  \begin{corollary}
    \label{cor:charge-conjugate}
    With the same notation with Theorem~\ref{thm:generalized-vafas-formula}, $P(W, G; u, v)$
    satisfies
    \begin{equation*}
      P(W, G; u, v) = P(W, G; v, u).
    \end{equation*}
    Equivalently, the Hodge numbers of $(W, G)$ satisfies
    \begin{equation*}
      h^{p,q}(W, G) = h^{q,p}(W, G) \quad \text{for} \quad p,q \in \Q.
    \end{equation*}
  \end{corollary}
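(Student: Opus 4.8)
The plan is to read off the symmetry directly from the explicit formula for $P_g(W, G; u, v)$ in Theorem~\ref{thm:generalized-vafas-formula}, exploiting the fact that this formula is valid for \emph{any} choice of representatives $S$ of the conjugacy classes. The guiding observation is that interchanging $u$ and $v$ in $P_g$ amounts to replacing $g$ by $g^{-1}$.

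First I would isolate where the two variables actually enter asymmetrically. In the factor
\[
  \frac{\lambda_i^{h, g} - (u v)^{1 - q_i}}{1 - \lambda_i^{h, g}(u v)^{q_i}}
\]
and in $(u v)^{-\sum_{\lambda_i^g \neq 1} q_i}$ the variables $u$ and $v$ occur only through the product $u v$, so these pieces are invariant under $u \leftrightarrow v$. Consequently the only effect of swapping $u$ and $v$ is on the monomial prefactor $u^{\age(g)} v^{\age(g^{-1})}$, which becomes $u^{\age(g^{-1})} v^{\age(g)}$.

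Next I would identify $P_g(W, G; v, u)$ with $P_{g^{-1}}(W, G; u, v)$. Here several routine matchings are needed: $C_G(g^{-1}) = C_G(g)$, so $\#C_G(g)$ and the summation over $h$ are unaffected; the eigenspace $V^{g^{-1}}$ coincides with $V^g$, so the index set $\{\, i : \lambda_i^g = 1 \,\}$ and the exponent $\sum_{\lambda_i^g \neq 1} q_i$ are the same for $g$ and $g^{-1}$; and since $g$, $g^{-1}$ and any $h \in C_G(g)$ admit a common eigenbasis, the eigenvalues of $h$ satisfy $\lambda_i^{h, g^{-1}} = \lambda_i^{h, g}$ with the charge labelling preserved. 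For the prefactor, $u^{\age(g^{-1})} v^{\age((g^{-1})^{-1})} = u^{\age(g^{-1})} v^{\age(g)}$, matching the swapped monomial from the previous paragraph. Putting these together gives $P_g(W, G; v, u) = P_{g^{-1}}(W, G; u, v)$.

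Finally, since $g \mapsto g^{-1}$ induces a bijection on the set of conjugacy classes of $G$, the set $S^{-1} = \{\, g^{-1} : g \in S \,\}$ is again a complete set of representatives. Applying Theorem~\ref{thm:generalized-vafas-formula} to $S^{-1}$ yields
\[
  P(W, G; u, v) = \sum_{g \in S^{-1}} P_g(W, G; u, v) = \sum_{g \in S} P_{g^{-1}}(W, G; u, v) = \sum_{g \in S} P_g(W, G; v, u) = P(W, G; v, u),
\]
and comparing coefficients of $u^p v^q$ gives $h^{p, q}(W, G) = h^{q, p}(W, G)$. The step I expect to require the most care, though it is minor, is the verification $\lambda_i^{h, g^{-1}} = \lambda_i^{h, g}$ together with the preservation of the charge labelling, since that labelling is defined through a simultaneous diagonalization of the pair $(g, h)$ and one must check that the same eigenbasis simultaneously diagonalizes $(g^{-1}, h)$ and assigns the charges in the same way.
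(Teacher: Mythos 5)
Your proof is correct and takes essentially the same route as the paper: both reduce the claim to the sector-level identity $P_{g}(W, G; v, u) = P_{g^{-1}}(W, G; u, v)$ and then conclude by re-indexing the sum over the set of inverse representatives $\{ g^{-1} : g \in S \}$, which is again a complete set of representatives of the conjugacy classes. The paper dismisses the sector-level identity as an ``easy calculation''; your verification that $u$ and $v$ enter the product factors only through $uv$, together with the matchings $C_G(g^{-1}) = C_G(g)$, $V^{g^{-1}} = V^{g}$, and the compatibility of the eigenvalue labelling $\lambda_i^{h,g^{-1}} = \lambda_i^{h,g}$, is precisely the content of that calculation.
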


  \begin{proof}
    If $S \subset G$ is a set of representatives of conjugacy classes of $G$,
    then $\Tilde{S} := \set{g^{-1} | g \in S}$ is also a set of representatives.
    By easy calculation, we have
    \begin{equation*}
      P_{g}(W, G; v, u) = P_{g^{-1}}(W, G, u, v)
    \end{equation*}
    and hence
    \begin{equation*}
      P(W, G; v, u) = \sum_{g \in S} P_{g}(W, G; v, u) = \sum_{g \in \Tilde{S}} P_{g}(W, G; u, v) = P(W, G; u, v). \qedhere
    \end{equation*}
  \end{proof}

  \begin{corollary}
    \label{cor:serre-duality}
    With the same notation with Theorem~\ref{thm:generalized-vafas-formula}, $P(u, v)$
    satisfies
    \begin{equation*}
      P(W, G; u, v) = (u v)^{\Hat{c}}P(W, G; u^{-1}, v^{-1}).
    \end{equation*}
    Equivalently, the Hodge numbers of $(W, G)$ satisfies
    \begin{equation*}
      h^{p,q}(W, G) = h^{\Hat{c} - p, \Hat{c} - q}(W, G) \quad \text{for} \quad p,q \in \Q.
    \end{equation*}
  \end{corollary}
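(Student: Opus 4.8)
The plan is to prove the stronger, sector-wise statement that
\[
(uv)^{\Hat{c}}\,P_g(W,G;u^{-1},v^{-1}) = P_{g^{-1}}(W,G;u,v)
\]
for every $g \in S$, and then sum over $S$. Exactly as in the proof of Corollary~\ref{cor:charge-conjugate}, the set $\Tilde{S} = \set{g^{-1} | g \in S}$ is again a system of representatives of the conjugacy classes, so once the sector-wise identity is established we obtain $(uv)^{\Hat{c}}P(W,G;u^{-1},v^{-1}) = \sum_{g\in S}P_{g^{-1}}(W,G;u,v) = \sum_{g\in\Tilde{S}}P_g(W,G;u,v) = P(W,G;u,v)$, which is the assertion. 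The structural point I would stress first is that Serre duality does \emph{not} preserve an individual twisted sector but interchanges the $g$-sector with the $g^{-1}$-sector; this is precisely what allows the asymmetric age prefactor $u^{\age(g)}v^{\age(g^{-1})}$ to be reconciled, since $\age(g) \neq \age(g^{-1})$ in general.

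For the sector-wise identity I would start from the explicit formula of Theorem~\ref{thm:generalized-vafas-formula}. Writing $t = uv$ and $\lambda = \lambda_i^{h,g}$, the substitution $(u,v)\mapsto(u^{-1},v^{-1})$ turns each factor of the product over $\set{i | \lambda_i^g = 1}$ into $\frac{\lambda - t^{-(1-q_i)}}{1-\lambda t^{-q_i}}$. The key algebraic lemma, which I would verify directly by clearing denominators, is the per-factor identity
\[
t^{1-2q_i}\,\frac{\lambda - t^{-(1-q_i)}}{1-\lambda t^{-q_i}} = \frac{\lambda^{-1} - t^{1-q_i}}{1-\lambda^{-1}t^{q_i}},
\]
so that multiplying by $t^{1-2q_i}$ converts the transformed $h$-factor into a factor of the original shape with $\lambda$ replaced by $\lambda^{-1}$. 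Since $(\lambda_i^{h,g})^{-1} = \lambda_i^{h^{-1},g}$ and $\set{i | \lambda_i^{g^{-1}} = 1} = \set{i | \lambda_i^g = 1}$, taking the product over these indices reproduces the $h^{-1}$-summand of $P_{g^{-1}}$ up to the accumulated power $t^{-\sum_{\lambda_i^g=1}(1-2q_i)}$. Reindexing the sum $h \mapsto h^{-1}$ over $C_G(g) = C_G(g^{-1})$ then yields exactly the sum appearing in $P_{g^{-1}}(W,G;u,v)$.

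It then remains to match the monomial prefactors. Here I would use $\Hat{c} = \sum_{i=1}^{n}(1-2q_i)$, split according to whether $\lambda_i^g = 1$, and invoke Lemma~\ref{lem:age-ng-relation} in the form $n - n_g = \age(g) + \age(g^{-1})$. Collecting the four contributions on the left-hand side, namely $(uv)^{\Hat{c}}$, the inverted age factor $u^{-\age(g)}v^{-\age(g^{-1})}$, the factor $(uv)^{\sum_{\lambda_i^g\neq 1}q_i}$ coming from the inverted charge shift, and the accumulated $(uv)^{-\sum_{\lambda_i^g=1}(1-2q_i)}$ from the previous step, I expect the exponent of $u$ to collapse to $\age(g^{-1}) - \sum_{\lambda_i^g\neq 1}q_i$ and that of $v$ to $\age(g) - \sum_{\lambda_i^g\neq 1}q_i$. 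These are precisely the exponents in the prefactor of $P_{g^{-1}}(W,G;u,v)$, where the swap of the roles of $\age(g)$ and $\age(g^{-1})$ is supplied by the age relation and the cancellation of $\sum_{\lambda_i^g=1}q_i$ is supplied by the central-charge identity.

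The main obstacle I anticipate is conceptual rather than computational: attempting to prove the identity one sector at a time fails, because only the combination of the per-factor inversion identity with the relation $\age(g)+\age(g^{-1}) = n - n_g$ restores the balance, and this forces the argument to be organized around the sector swap $g \leftrightarrow g^{-1}$. Once that is recognized, the remaining work is the elementary per-factor identity above and the bookkeeping of the exponents of $u$ and $v$.
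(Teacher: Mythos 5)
Your proposal is correct and follows essentially the same route as the paper: the sector-wise identity $(uv)^{\Hat{c}}P_g(W,G;u^{-1},v^{-1}) = P_{g^{-1}}(W,G;u,v)$ established via the per-factor inversion identity together with $(\lambda_i^{h,g})^{-1} = \lambda_i^{h^{-1},g}$, the exponent bookkeeping using $\Hat{c} = \sum_i(1-2q_i)$ and Lemma~\ref{lem:age-ng-relation}, and the final summation over $S$ using that $\Tilde{S} = \set{g^{-1} | g \in S}$ is again a set of representatives. The only difference is expository: you make explicit the reindexing $h \mapsto h^{-1}$ over $C_G(g)$ and the role of $\Tilde{S}$, both of which the paper leaves implicit.
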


  \begin{proof}
    First, consider the factor $(u v)^{\Hat{c}}u^{-\age(g)}v^{-\age(g^{-1})}(u v)^{\sum_{\lambda_{i}^{g} \neq 1}q_{i}}$.
    For the exponent of $u$, we have
    \begin{align*}
      -\age(g) + \sum_{\lambda_{i}^{g} \neq 1}q_{i} + \sum_{i}(1-2q_{i})
      &= n - \age(g) + \sum_{\lambda_{i}^{g} \neq 1}q_{i} - 2\sum_{i}q_{i} \\
      &= \age(g^{-1}) + n_{g} - \sum_{\lambda_{i}^{g} \neq 1}q_{i} - 2\sum_{\lambda_{i}^{g} = 1}q_{i} \\
      &= \age(g^{-1}) - \sum_{\lambda_{i}^{g} \neq 1}q_{i} + \sum_{\lambda_{i}^{g} = 1}(1-2q_{i})
    \end{align*}
    Similarly, we have $\age(g) - \sum_{\lambda_{i}^{g} \neq 1}q_{i} + \sum_{\lambda_{i}^{g} = 1}(1-2q_{i})$ for the exponent of $v$.
    Hence multiplying a part of the above factor to each summand, we have
    \begin{align*}
      (u v)^{\sum_{\lambda_{i}^{g} = 1}(1-2q_{i})} \prod_{\lambda_i^{g} = 1} \frac{\lambda_{i}^{h,g}-(u v)^{-1+q_{i}}}{1-\lambda_{i}^{h,g}(u v)^{-q_i}}
      &= \prod_{\lambda_i^{g}=1} (u v)^{1-2q_{i}} \frac{1-(\lambda_{i}^{h,g})^{-1}(u v)^{-1+q_{i}}}{(\lambda_{i}^{h,g})^{-1}-(u v)^{-q_i}} \\
      &= \prod_{\lambda_i^{g}=1} \frac{(u v)^{1-q_{i}}-(\lambda_{i}^{h,g})^{-1}}{(\lambda_{i}^{h,g})^{-1}(u v)^{q_i}-1} \\
      &= \prod_{\lambda_i^{g}=1} \frac{\lambda_{i}^{h^{-1},g}-(u v)^{1-q_{i}}}{1-\lambda_{i}^{h^{-1},g}(u v)^{q_i}}.
    \end{align*}
    Therefore we have
    \begin{align*}
      & (u v)^{\Hat{c}}P_{g}(W, G; u^{-1}, v^{-1}) \\
      &= \frac{1}{\#C_{G}(g)}(u v)^{\Hat{c}}u^{-\age(g)}v^{-\age(g^{-1})} (u v)^{\sum_{\lambda_{i}^{g} \neq 1}q_{i}} \sum_{h \in C_{G}(g)}
           \prod_{\lambda_i^{g} = 1} \frac{\lambda_{i}^{h,g}-(u v)^{-1+q_{i}}}{1-\lambda_{i}^{h,g}(u v)^{-q_i}} \\
      &= \frac{1}{\#C_{G}(g)}u^{\age(g^{-1})}v^{\age(g)} (u v)^{-\sum_{\lambda_{i} \neq 1} q_{i}}\sum_{h \in C_{G}(g)}
           \prod_{\lambda_i^{g}=1} \frac{\lambda_{i}^{h^{-1},g}-(u v)^{1-q_{i}}}{1-\lambda_{i}^{h^{-1},g}(u v)^{q_i}} \\
      &= P_{g^{-1}}(W, G; u, v).
    \end{align*}
    The sum of this relation with respect to $g$ over $S$ gives the desired result.
  \end{proof}

  If the condition $h^{p, q}(W, G) \neq 0$ implies $p - q \in \Z$, we can define the \emph{Witten index}
  $\tr(-1)^{F} = P(W, G; -1, -1)$.
  Physically, the Witten index is the difference between the number of bosonic and fermionic zero energy states of the corresponding
  conformal field theory.
  The Witten index can be considered as a Landau-Ginzburg orbifold counterpart for the Euler number.

\section{Orbifolds of nonsingular quintic threefolds}
  \label{sec:orbifold-quintic-threefold}
  In this section, we review the result of Yu~\cite{Yu_2016} which calculated the orbifold Hodge numbers of
  pairs of a nonsingular quintic threefold and a subgroup of the automorphism group which fixes a nowhere
  vanishing holomorphic $3$-form.

  First of all, we recall the definition of the orbifold Hodge numbers and orbifold Euler number,
  mainly following Batyrev-Dais~\cite{batyrev1996} and Yu~\cite{Yu_2016}.
  Let $X$ be a nonsingular compact Kähler manifold of dimension $n$ over $\C$
  and $\Tilde{G}$ be a finite subgroup of the automorphism group $\Aut(X)$ of $X$ which fixes a nowhere vanishing holomorphic $n$-form on $X$.
  For any $g \in \Tilde{G}$, we set $X^{g} = \set{p \in X | g{\cdot}p = p}$ and $X^{g} = X_{1}(g) \sqcup \dots \sqcup X_{r_{g}}(g)$
  its decomposition to the nonsingular connected components.
  The action of the centralizer $C_{G}(g)$ of $g$ can be restricted to $X^{g}$.
  For any point $p \in X^{g}$, the eigenvalues of $g$ in the holomorphic tangent space $T_{p}X$ are roots of unity:
  \begin{equation*}
    e^{2\pi\iu \alpha_{1}}, \dots, e^{2\pi\iu \alpha_{n}}.
  \end{equation*}
  where $\alpha_{i} \in [0, 1) \cap \Q$ are locally constant function on $X^{g}$.
  Define the locally constant function $\age(g) \colon X^{g} \to \Q_{\geq 0}$ on $X^{g}$ by
  \begin{equation*}
    \age(g)(p) = \sum_{i = 1}^{n} \alpha_{i} = \frac{1}{2\pi\iu} \tr_{T_{p}X}\log(g).
  \end{equation*}
  Notice that the age is in general not a constant unlike the Landau-Ginzburg case.
  We denote by $h_{g}^{p, q}(X, \Tilde{G})$ the sum of the dimensions of the $C_{\Tilde{G}}(g)$-invariant subspaces of $H^{p - \age(g), q - \age(g)}(X_{i}(g))$, i.e.
  \begin{equation*}
    h_{g}^{p, q}(X, \Tilde{G}) = \sum_{i = 1}^{r_{g}} \dim_{\C} \big( H^{p - \age(g), q - \age(g)}(X_{i}(g)) \big)^{C_{\Tilde{G}}(g)}.
  \end{equation*}
  Here, $\age(g)$ is a constant evaluated at the corresponding connected component $X_{i}(g)$.
  The \emph{orbifold Hodge numbers} $h^{p, q}(X, \Tilde{G})$ of $X/\Tilde{G}$ are defined by
  \begin{equation*}
    h^{p, q}(X, \Tilde{G}) = \sum_{g \in S} h_{g}^{p, q}(X, \Tilde{G})
  \end{equation*}
  where $S$ is a set of representatives of the conjugacy classes of $\Tilde{G}$.
  The \emph{orbifold Euler number} $e(X, \Tilde{G})$ of $X/\Tilde{G}$ is defined by
  \begin{equation*}
    e(X, \Tilde{G}) = \frac{1}{\# \Tilde{G}} \sum_{g h = h g}e(X^{g} \cap X^{h})
  \end{equation*}
  where $e$ in the right hand side means the Euler number of $X^{g} \cap X^{h}$.

  Yu~\cite{Yu_2016} calculated the possible orbifold Hodge numbers for nonsingular quintic threefolds.

  \begin{theorem}[Yu~\cite{Yu_2016}]
    \label{thm:nonsingular-quintic-threefold-hodge-number}
    Let $X$ be a nonsingular quintic threefold and $\Tilde{G}$ be a subgroup of the automorphism group $\Aut(X)$ which fixes
    a nowhere vanishing holomorphic $3$-form on $X$.
    Then $\Tilde{G}$ is one of the groups listed in Table~\ref{table:quintic-threefold-orbifold} of Appendix~\ref{sec:table}
    up to a linear change of the coordinates.
    The orbifold Hodge numbers $h^{p, q}(X, \Tilde{G})$ satisfy
    \begin{align*}
      h^{0, 0}(X, \Tilde{G}) = h^{3, 0}(X, \Tilde{G}) &= h^{0, 3}(X, \Tilde{G}) = h^{3, 3}(X, \Tilde{G}) = 1, \\
      h^{1, 1}(X, \Tilde{G}) = h^{2, 2}(X, \Tilde{G}) &, \quad h^{2, 1}(X, \Tilde{G}) = h^{1, 2}(X, \Tilde{G}),
    \end{align*}
    and $0$ otherwise.
    $h^{1, 1}(X, \Tilde{G})$, $h^{2, 1}(X, \Tilde{G})$ and the orbifold Euler number $e(X, \Tilde{G})$ are values specified in the same table.
  \end{theorem}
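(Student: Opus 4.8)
The statement combines a group-theoretic classification with a computation of orbifold invariants, so the plan is to treat these parts separately and then assemble the specific values case by case. First I would invoke the complete classification of $\Aut(X)$ for a nonsingular quintic threefold due to Oguiso and Yu~\cite{oguiso2015}. An automorphism $g$ fixes a nowhere vanishing holomorphic $3$-form exactly when it acts trivially on $H^{3,0}(X) \cong \C$; since $H^{3,0}(X)$ is generated by the Poincaré residue of the standard rational $4$-form, this is equivalent to asking that a lift of $g$ to $\GL_{5}(\C)$ preserving the quintic lie in $\SL_{5}(\C)$. Intersecting the Oguiso-Yu list with this $\SL$-type condition and recording one representative per linear-equivalence class yields the groups tabulated in Table~\ref{table:quintic-threefold-orbifold}.

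For the symmetry relations among the Hodge numbers I would work entirely within the Chen-Ruan framework~\cite{chen2004, batyrev1996}. Two structural inputs do the work. The orbifold cohomology of a global quotient carries a Hodge symmetry $h^{p,q}(X,\Tilde{G}) = h^{q,p}(X,\Tilde{G})$, coming from complex conjugation on each summand $H^{p-\age(g), q-\age(g)}(X_{i}(g))$, together with a Poincaré duality $h^{p,q}(X,\Tilde{G}) = h^{n-p,n-q}(X,\Tilde{G})$ with $n = 3$. The first gives $h^{2,1} = h^{1,2}$ and the second gives $h^{1,1} = h^{2,2}$. For the corner values I would use that $\Tilde{G} \subset \SL_{5}(\C)$ forces $\age(g) \in \Z_{>0}$ for every $g \neq 1$ with nonempty fixed locus, because $\det(g|_{T_{p}X}) = 1$ makes $\sum_{i}\alpha_{i}$ a positive integer; consequently a twisted sector only contributes to bidegrees $(p,q)$ with $p, q \geq 1$. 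Combined with the vanishing $h^{p,0}(X) = 0$ for $p \neq 0, 3$ of the simply connected Calabi-Yau threefold $X$, this isolates the corners: only the untwisted sector reaches $(0,0)$, $(3,0)$, $(0,3)$, $(3,3)$, where it contributes the $\Tilde{G}$-invariants of $H^{0,0}$, $H^{3,0}$, $H^{0,3}$, $H^{3,3}$, each one-dimensional and fixed by hypothesis. The same age bound kills all remaining off-diagonal entries, giving the ``$0$ otherwise''.

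It remains to pin down the two independent numbers $h^{1,1}$, $h^{2,1}$ and the Euler number for each group. For a fixed $\Tilde{G}$ and each conjugacy class representative $g$ I would determine $X^{g}$ explicitly in coordinates; these fixed loci decompose into points and smooth curves, whose ordinary Hodge numbers are read off directly (points contribute only to $H^{0,0}$, and a curve contributes through its genus to $H^{0,0}$, $H^{1,0}$, $H^{0,1}$, $H^{1,1}$). Applying the age shift, taking $C_{\Tilde{G}}(g)$-invariants, and summing over conjugacy classes assembles $h^{1,1}$ and $h^{2,1}$; the orbifold Euler number then follows by evaluating $\frac{1}{\#\Tilde{G}}\sum_{gh=hg} e(X^{g}\cap X^{h})$ from the same fixed-point data, or, as a consistency check, as the alternating sum of the orbifold Hodge numbers. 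The main obstacle is exactly this last step: one must carry out the fixed-locus geometry uniformly across every group on the list, correctly identify the genera of the fixed curves, and track the centralizer action on the cohomology of each component so that the invariant subspaces are counted with the right multiplicity. This bookkeeping, rather than any single conceptual difficulty, is where the real labor lies.
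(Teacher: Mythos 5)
Your proposal is correct in outline, but it takes a genuinely different route from the one the paper (following Yu~\cite{Yu_2016}) describes; note that the paper itself does not prove this theorem --- it imports it from Yu and only sketches the method in the surrounding discussion. Where you derive the Calabi-Yau shape of the orbifold Hodge diamond intrinsically inside Chen-Ruan cohomology --- Hodge symmetry from conjugation on each $H^{p-\age(g),q-\age(g)}(X_{i}(g))$, Poincar\'e duality pairing the $g$- and $g^{-1}$-sectors, and the integrality bound $\age(g)\geq 1$ forced by $\det(g\vert_{T_{p}X})=1$ together with $h^{1,0}(X)=h^{2,0}(X)=0$ --- the paper instead invokes the existence of a crepant resolution $Z$ of $X/\Tilde{G}$ (Bridgeland-King-Reid~\cite{bridgeland2001}) and the McKay correspondence of Batyrev-Dais~\cite{batyrev1996}, so that $h^{p,q}(X,\Tilde{G})=h^{p,q}(Z)$ for a genuine Calabi-Yau threefold $Z$ and the whole diamond structure is immediate. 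That geometric identification also buys a practical dividend your approach lacks: the relation $e(X,\Tilde{G})=2\big(h^{1,1}(X,\Tilde{G})-h^{2,1}(X,\Tilde{G})\big)$, which lets one determine $h^{2,1}$ from $h^{1,1}$ and the Euler number --- both computable from fixed-locus data alone --- without ever tracking the centralizer representation on the cohomology of the fixed curves, which is exactly the bookkeeping you flag as the hardest step of your plan. Conversely, your argument is more self-contained (no resolution or McKay input), and your classification step --- Oguiso-Yu plus the observation that fixing the $3$-form is equivalent to the lift lying in $\SL_{5}(\C)$ --- agrees with Yu's. One small point to make airtight in your version: you should justify that nontrivial elements have fixed loci of dimension at most one (a pointwise-fixed surface would force all three eigenvalues at a fixed point to equal $1$, hence $g=\mathrm{id}$), since both your age bound and your ``points and smooth curves'' enumeration rest on it.
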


  From the above theorem, we can observe that the orbifold Hodge numbers $h^{p, q}(X, \Tilde{G})$ and the orbifold Euler number $e(X, \Tilde{G})$
  have the same type with Calabi-Yau threefolds.
  This is not an accidental phenomenon.
  Indeed, the quotient space $X/\Tilde{G}$ has a crepant resolution $Z$ which is again a Calabi-Yau threefold~\cite{bridgeland2001}.
  The orbifold Hodge numbers and Euler number of $X/\Tilde{G}$ coincide with the Hodge numbers $h^{p, q}(Z)$
  and the Euler number $e(Z)$ of $Z$ due to the McKay correspondence~\cite{batyrev1996}, i.e.
  \begin{equation*}
    h^{p, q}(X, \Tilde{G}) = h^{p, q}(Z), \quad e(X, \Tilde{G}) = e(Z).
  \end{equation*}
  The existence of $Z$ also reduces difficulty to determine the orbifold Hodge numbers.
  For example, this existence gives us the relation
  \begin{equation*}
    e(X, \Tilde{G}) = 2(h^{1, 1}(X, \Tilde{G}) - h^{2, 1}(X, \Tilde{G}))
  \end{equation*}
  from which we can determine $h^{2, 1}(X, \Tilde{G})$ without knowing the representation theory of $G$ of the cohomology of $X$
  if we have $h^{1, 1}(X, \Tilde{G})$ and $e(X, \Tilde{G})$ which are often easier to determine.
  Using such relations, Yu completed the computations of $h^{1, 1}(X, \Tilde{G})$, $h^{2, 1}(X, \Tilde{G})$ and $e(X, \Tilde{G})$.

\section{Hodge numbers of Landau-Ginzburg B-models}
  \label{sec:quintic-landau-ginzburg-orbifold}
  In this section, we calculate the Hodge numbers of Landau-Ginzburg orbifolds which satisfy the (generalized) Calabi-Yau conditions.
  Since $\Hat{c} = n - 2 \sum_{i} q_{i}$ can be considered as the dimension of a geometric counterpart,
  the (generalized) Calabi-Yau condition is presumed to be a necessary condition for Calabi-Yau/Landau-Ginzburg correspondence.
  In particular, we will consider the case of $\Hat{c} = 1, 2, 3$ here.
  One of the simplest example of $\Hat{c} = 3$ is a family of nondegenerate quintic homogeneous polynomials with five variables.
  We will calculate all possible Hodge numbers of the Landau-Ginzburg orbifold of a pair $(W, G)$ where $G$ satisfies
  $\langle J \rangle \subset G \subset \SL_{5}(\C)$, as a result of which we have the main theorem of the paper.

  \subsection*{Case of $\Hat{c} = 1$}
    We start with the case of $\Hat{c} = 1$ and the Calabi-Yau condition is satisfied $\sum_{i} q_{i} = 1$.
    Then the number of variables must be $3$.
    Let $W = x_{1}^{3} + x_{2}^{3} + x_{3}^{3}$ and $J = \diag(\zeta_{3}, \zeta_{3}, \zeta_{3})$.
    We consider the Poincaré polynomials for some $G \subset G_{W}$.

    At first, let $G = \langle J \rangle$.
    $G$ is an abelian group of the order $3$.
    We can easily see
    \begin{equation*}
      h^{p, q}(W, G) =
      \begin{cases}
        1 & \text{if $(p, q) = (0, 0), (0, 1), (1, 0), (1, 1)$,} \\
        0 & \text{otherwise.} \\
      \end{cases}
    \end{equation*}
    Observe that these Hodge numbers completely coincide with the ones of a $1$-dimensional Calabi-Yau manifold.
    We obtain the same Hodge numbers for all $G$ which is a diagonal symmetry group and satisfy $J \subset G \subset \SL_{3}(\C)$.

    Next, let us discuss an example which $G$ contains nondiagonal matrices.
    Since $W$ is invariant under a permutation of variables $x_{1} \to x_{2} \to x_{3} \to x_{1}$,
    we can set $G$ the group generated by $J$ and the following matrix:
    \begin{equation*} {\renewcommand\arraystretch{0.8}\setlength\arraycolsep{2pt}
      \begin{pmatrix}
        0 & 1 & 0 \\
        0 & 0 & 1 \\
        1 & 0 & 0
      \end{pmatrix}.
    } \end{equation*}
    Note that $G$ satisfies $\langle J \rangle \subset G \subset \SL_{3}(\C)$.
    This example gives the same Hodge numbers with the above examples.

    If $G$ does not satisfy $\langle G \rangle \subset G \subset \SL_{3}(\C)$, the situation is different.
    For example, let $G$ be a group generated by $\diag(\zeta_{3}, 1 ,1)$.
    Then the Poincaré polynomial is
    \begin{equation*}
      P(W, G; u, v) = u v^{2/3} + u^{2/3} v + 2 u^{2/3} v^{1/3} + 2 u^{1/3} v^{2} + u^{1/3} + v^{1/3}.
    \end{equation*}
    In general, the condition $\langle J \rangle \subset G \subset \SL_{n}(\C)$ seems to be necessary
    to obtain the Hodge numbers which look like the ones of some manifold.

  \subsection*{Case of $\Hat{c} = 2$}
    The case of $\Hat{c} = 2$ is similar to that of $\Hat{c} = 1$ since all $2$-dimensional Calabi-Yau manifolds also have the same Hodge numbers.
    For example, we can check that each diagonal symmetry group $G$ of $W = x_{1}^{4} + x_{2}^{4} + x_{3}^{4} + x_{4}^{4}$
    satisfying $\langle J \rangle \subset G \subset \SL_{4}(\C)$ yields the
    Hodge numbers
    \begin{equation*}
      h^{p, q}(W, G) =
      \begin{cases}
        1 & \text{if $(p, q) = (0, 0), (0, 2), (2, 0), (2, 2)$,} \\
        20 & \text{if $(p, q) = (1, 1)$,} \\
        0 & \text{otherwise.} \\
      \end{cases}
    \end{equation*}
    The same Hodge numbers appear if $G$ is generated by $J$ and the following two matrices which correspond to generators of
    the alternating group of degree $4$:
    \begin{equation*} {\renewcommand\arraystretch{0.8}\setlength\arraycolsep{2pt}
      \begin{pmatrix}
        0 & 1 & 0 & 0  \\
        0 & 0 & 1 & 0  \\
        0 & 0 & 0 & 1  \\
        1 & 0 & 0 & 0
      \end{pmatrix}, \quad
      \begin{pmatrix}
        1 & 0 & 0 & 0  \\
        0 & 0 & 1 & 0  \\
        0 & 0 & 0 & 1  \\
        0 & 1 & 0 & 0
      \end{pmatrix}.
    } \end{equation*}
    This is an example of a Landau-Ginzburg orbifold with a nonabelian group.

  \subsection*{Case of $\Hat{c} = 3$}
    In the examples of $\Hat{c} = 1, 2$, we obtained the same Hodge numbers regardless of $G$ if $\langle J \rangle \subset G \subset \SL_{n}(\C)$.
    We can guess that the case of $\Hat{c} = 3$ is somewhat different since a $3$-dimensional Calabi-Yau manifold $X$ does not have fixed values
    for $h^{1, 1}(X)$ and $h^{2, 1}(X)$.

    Let $W$ be a nondegenerate quintic homogeneous polynomial with five variables and $G$ be a finite subgroup of the symmetry group $G_{W}$.
    Put $J = \diag(\zeta_5, \dots, \zeta_5)$ and assume $\langle J \rangle \subset G \subset \SL_5(\C)$.
    $W$ defines a nonsingular quintic threefold $X = \{W=0\} \subset \P^4$.
    Let $\pi \colon \SL_5(\C) \to \PGL_5(\C)$ be a quotient map.
    $\Tilde{G} \defeq \pi(G)$ is a finite subgroup of $\Aut(X)$ which fixes a nowhere vanishing holomorphic $3$-form on $X$.
    From Theorem~\ref{thm:nonsingular-quintic-threefold-hodge-number}, we can examine the structure of $\Tilde{G}$ and hence $G$.

    \begin{theorem}
      \label{thm:main-theorem}
      In the above settings, $G$ is one of the groups listed in Table~\ref{table:quintic-landau-ginzburg-orbifold} of Appendix~\ref{sec:table}
      up to a linear change of the coordinates.
      In particular, we have the relation
      \begin{equation*}
        h^{p,q}(W, G) = h^{3-p, q}(X, \Tilde{G}) \quad \text{for all $p, q$}.
      \end{equation*}
    \end{theorem}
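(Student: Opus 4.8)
The plan is to separate the statement into two essentially independent parts: the classification of the admissible groups $G$, and the numerical identity between the two families of Hodge numbers. The classification I would obtain by transporting Yu's list (Theorem~\ref{thm:nonsingular-quintic-threefold-hodge-number}) across the quotient map $\pi$, while the identity I would prove by evaluating the generating function of Theorem~\ref{thm:generalized-vafas-formula} for each group and comparing the result, organized sector by sector, with the orbifold Hodge numbers recorded in Table~\ref{table:quintic-threefold-orbifold}.

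For the classification, first I would observe that the kernel of $\pi$ restricted to $\SL_5(\C)$ consists exactly of the scalar matrices $\lambda E_5$ with $\lambda^5 = 1$, that is $\langle J\rangle$. Hence $\Tilde G = \pi(G) \cong G/\langle J\rangle$, and conversely $G = \pi^{-1}(\Tilde G)\cap\SL_5(\C)$ is recovered from $\Tilde G$, so the assignment $G \mapsto \Tilde G$ is a bijection between the groups under consideration and their images. The point is that the two defining conditions match up: by Lemma~\ref{lem:findim-iff-isolated} nondegeneracy of $W$ is equivalent to smoothness of $X = \{W = 0\}$, and for $g \in \SL_5(\C)$ preserving $X$ the induced automorphism fixes a nowhere vanishing holomorphic $3$-form if and only if $g\cdot W = W$, i.e.\ $g \in G_W$; indeed both the holomorphic $3$-form and the polynomial $W$ transform by explicit scalars under $g$, and $\det g = 1$ forces these scalars to agree and hence to be trivial. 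Since every automorphism of a smooth quintic threefold is induced by a projective linear transformation, each $\Tilde G$ in Yu's list lifts to a unique such $G$, and conversely every admissible $G$ projects into the list; coordinate independence (the proposition following Lemma~\ref{lem:twisted_sector_similar_invariant}) carries over Yu's phrase ``up to a linear change of the coordinates.'' This produces Table~\ref{table:quintic-landau-ginzburg-orbifold}.

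For the Hodge numbers, since $\Hat c = 5 - 2\sum_i q_i = 3$, Corollary~\ref{cor:charge-conjugate} and Corollary~\ref{cor:serre-duality} give the symmetries $h^{p,q} = h^{q,p} = h^{3-p,3-q}$, so the whole bigraded space is determined by very few entries, and (by the remark after Theorem~\ref{thm:generalized-vafas-formula}) the computation depends only on $G$, not on a particular $W$, so I may work with the Fermat quintic $\sum_i x_i^5$. I would first check directly, as in the untwisted-sector example containing $1 \otimes d x_1 \wedge \dots \wedge d x_5$, that the corner and edge entries reproduce the Calabi-Yau threefold shape $h^{0,0}=h^{3,0}=h^{0,3}=h^{3,3}=1$ with the remaining mass concentrated at $(1,1),(2,2),(1,2),(2,1)$, which already matches the shape of Theorem~\ref{thm:nonsingular-quintic-threefold-hodge-number} after the reflection $p\mapsto 3-p$. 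It then suffices to verify the two independent identities $h^{1,1}(W,G) = h^{2,1}(X,\Tilde G)$ and $h^{2,1}(W,G) = h^{1,1}(X,\Tilde G)$. Because $\langle J\rangle$ is central in $G$, the conjugacy classes of $G$ fiber over those of $\Tilde G$: a class $[\tilde g]$ lifts to the classes of $g_0 J^k$ ($0 \le k \le 4$) sharing the centralizer $C_G(g_0)$, so I can reorganize the sum in Theorem~\ref{thm:generalized-vafas-formula} as a sum over conjugacy classes of $\Tilde G$, each contributing its five $J$-shifted twisted sectors; evaluating each product $\prod_{\lambda_i^g = 1}\frac{\lambda_i^{h,g} - (uv)^{1-q_i}}{1 - \lambda_i^{h,g}(uv)^{q_i}}$ requires, per representative, the fixed space $V^g$, its charges, and $\age(g)$, followed by averaging over $C_G(g)$.

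The main obstacle is exactly this last computation for the nonabelian groups, where the bookkeeping of conjugacy classes, centralizers, and fixed subspaces is intricate; I would relegate these routine but lengthy evaluations to Appendix~\ref{sec:hodge-number-calculation}. Conceptually, the reflection $p \mapsto 3-p$ distinguishing the two sides is forced by the asymmetric appearance of $\age(g)$ and $\age(g^{-1})$ in the left and right charges of Definition~\ref{def:twisted-sector}, as against the symmetric geometric age entering $h^{p,q}(X,\Tilde G)$; this is the shift that exchanges the roles of $h^{1,1}$ and $h^{2,1}$, visible already in the base case $G = \langle J\rangle$, which sends the quintic's $(h^{1,1},h^{2,1})=(1,101)$ to the B-model's $(101,1)$. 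Having matched both independent numbers against Yu's Table~\ref{table:quintic-threefold-orbifold} for every group, the symmetries above propagate the equality to all $(p,q)$, completing the proof.
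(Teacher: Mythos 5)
Your overall route is the paper's own: transport Yu's classification (Theorem~\ref{thm:nonsingular-quintic-threefold-hodge-number}) through the bijection between subgroups of $\PGL_5(\C)\cong\SL_5(\C)/\langle J\rangle$ and subgroups of $\SL_5(\C)$ containing $\langle J\rangle$, then evaluate the formula of Theorem~\ref{thm:generalized-vafas-formula} group by group and compare with Table~\ref{table:quintic-threefold-orbifold}. Your classification half is sound, and is in fact more careful than the paper's, which merely asserts that $\pi(G)$ fixes a nowhere vanishing holomorphic $3$-form; your observation that $\det g=1$ forces the scalar by which $g$ rescales $W$ to agree with the scalar by which it rescales the $3$-form supplies a justification the paper omits.

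The gap is in your organization of the Hodge-number computation. The claim that the conjugacy classes of $G$ fiber over those of $\Tilde{G}$ in packets of \emph{five distinct} classes $[g_0J^k]$, $0\le k\le 4$, all sharing the centralizer $C_G(g_0)$, is false for several groups in the table, namely the nonabelian ones in which $J$ arises as a commutator of the other generators. Take group 42: $G=\langle D,P\rangle$ with $D=\diag(1,\zeta_5,\zeta_5^2,\zeta_5^3,\zeta_5^4)$ and $P$ the permutation matrix of $(12345)$. Conjugation by $P$ cyclically shifts the diagonal of $D$, and a cyclic shift of $(1,\zeta_5,\zeta_5^2,\zeta_5^3,\zeta_5^4)$ is $\zeta_5^{\pm1}$ times itself, so $PDP^{-1}=J^{\pm1}D$. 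Hence $[D]=[JD]=\dots=[J^4D]$ is a \emph{single} conjugacy class, not five; this $G$ is a Heisenberg-type group of order $125$ with $29$ conjugacy classes, whereas your fibering predicts $5\times 25=125$, and $C_G(D)=\langle J,D\rangle$ has order $25$, strictly smaller than the preimage of $C_{\Tilde{G}}(\pi(D))=\Tilde{G}$. Summing ``five $J$-shifted sectors per class of $\Tilde{G}$'' therefore overcounts twisted sectors fivefold precisely in the nonabelian cases the theorem is about, and would not reproduce the stated Poincar\'e polynomial nor match Yu's numbers. This is exactly why the paper's Appendix~\ref{sec:hodge-number-calculation} computes the conjugacy classes and centralizers of $G$ itself (via GAP), and invokes the coset-by-$J$ decomposition only for certain abelian groups (the 32nd and 61st), where $G=\langle J\rangle\times G'$ and the decomposition is valid. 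A second, smaller slip: you cannot take $W$ to be the Fermat quintic for every group (group 4, for instance, is not a symmetry group of $x_1^5+\dots+x_5^5$; it requires $W_2$). This happens to be harmless, since the formula uses nothing about $W$ beyond its existence and the charges $q_i=1/5$, but the hypothesis that $G\subset G_W$ for some nondegenerate quintic $W$ must be met by the polynomials listed in the table, not by the Fermat quintic.
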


    \begin{proof}
      By Theorem~\ref{thm:nonsingular-quintic-threefold-hodge-number}, $\Tilde{G}$ is one of the groups listed in Table~\ref{table:quintic-threefold-orbifold}
      of Appendix~\ref{sec:table}.
      Since there is one-to-one correspondence between subgroups of the $\PGL_5(\C) \cong \PSL_5(\C) \cong \SL_5(\C)/\langle J \rangle$ and
      subgroups of $\SL_5(\C)$ which contain $\langle J \rangle$, $G$ is uniquely determined by $\Tilde{G}$.
      Therefore $G$ is one of the groups shown in Table~\ref{table:quintic-landau-ginzburg-orbifold} up to a linear change of the coordinates.
      Now we can calculate the Hodge numbers $h^{p,q}(W, G)$ using the formula in Theorem~\ref{thm:generalized-vafas-formula}.
      By direct calculations (see Appendix~\ref{sec:hodge-number-calculation}), we have
      \begin{align*}
        P(W, G; u, v) = &\ u^3v^3 + h^{1,1}(W, G)u^2v^2 \\ \tag{$\star$}
          & + u^3 + h^{2,1}(W, G)u^2v + h^{2,1}(W, G)u v^2 + v^3 \\
          & + h^{1,1}(W, G)u v + 1
      \end{align*}
      where $h^{1,1}(W, G)$ and $h^{2,1}(W, G)$ are the values listed in Table~\ref{table:quintic-landau-ginzburg-orbifold}.
      Comparing Table~\ref{table:quintic-threefold-orbifold} and Table~\ref{table:quintic-landau-ginzburg-orbifold},
      we see that the relation $h^{p,q}(W, G) = h^{3-p,q}(X, \Tilde{G})$ holds.
    \end{proof}
    As we can see from Section~\ref{sec:orbifold-quintic-threefold}, there is a $3$-dimensional Calabi-Yau manifold $Z$
    such that its Hodge numbers satisfy the relation $h^{p, q}(W, G) = h^{3 - p, q}(Z)$.

    Let us see one example in which $W$ is not homogeneous, for example $W = x_{1}^{8} + x_{2}^{8} + x_{3}^{4} + x_{4}^{4} + x_{5}^{4}$.
    We summarize the result of calculations of the Hodge numbers for some $G$.
    Since all calculation yield the Poincaré polynomial of the form ($\star$),
    it suffices to show values of $h^{1, 1}(W, G)$ and $h^{2, 1}(W, G)$.

    \begin{enumerate}
      \item Let $G$ be a group generated by $J$. Then we have
        \begin{equation*}
          h^{1, 1}(W, G) = 86, \quad h^{2, 1}(W, G) = 2.
        \end{equation*}
      \item Let $G$ be a group generated by $J$, $\diag(\zeta_{8}, \zeta_{8}^{-1}, 1, 1, 1)$, $\diag(\zeta_{4}, 1, \zeta_{4}^{-1}, 1, 1)$,
        $\diag(\zeta_{4}, 1, 1, \zeta_{4}^{-1}, 1)$ and $\diag(\zeta_{4}, 1, 1, \zeta_{4}^{-1})$.
        Then we have
        \begin{equation*}
          h^{1, 1}(W, G) = 2, \quad h^{2, 1}(W, G) = 86.
        \end{equation*}
      \item Let $G$ be a group generated by $J$ and the following matrix:
        \begin{equation*} {\renewcommand\arraystretch{0.8}\setlength\arraycolsep{2pt}
          \begin{pmatrix}
            1 & 0 & 0 & 0 & 0 \\
            0 & 1 & 0 & 0 & 0 \\
            0 & 0 & 0 & 1 & 0 \\
            0 & 0 & 0 & 0 & 1 \\
            0 & 0 & 1 & 0 & 0 \\
          \end{pmatrix}.
        } \end{equation*}
        Then we have
        \begin{equation*}
          h^{1, 1}(W, G) = 52, \quad h^{2, 1}(W, G) = 8.
        \end{equation*}
      \item Let $G$ be a group generated by $J$ and the following matrix:
        \begin{equation*} {\renewcommand\arraystretch{0.8}\setlength\arraycolsep{2pt}
          \begin{pmatrix}
            0 & -1 & 0 & 0 & 0 \\
            -1 & 0 & 0 & 0 & 0 \\
            0 & 0 & -1 & 0 & 0 \\
            0 & 0 & 0 & -1 & 0 \\
            0 & 0 & 0 & 0 & -1 \\
          \end{pmatrix}.
        } \end{equation*}
        Then we have
        \begin{equation*}
          h^{1, 1}(W, G) = 52, \quad h^{2, 1}(W, G) = 4.
        \end{equation*}
    \end{enumerate}

    As our final example, consider $W = x_{1}^{3} + x_{2}^{3} + x_{3}^{3} + x_{4}^{3} + x_{5}^{3} + x_{6}^{3} + x_{7}^{3} + x_{8}^{3} + x_{9}^{3}$.
    Notice that $W$ satisfies the generalized Calabi-Yau condition.
    Let $G$ be a group generated by $J = \diag(\zeta_{3}, \dots, \zeta_{3})$.
    The Poincaré polynomial of the pair $(W, G)$ is
    \begin{equation*}
      P(W, G; u, v) = u^3v^3 + 84 u^2v^2 + u^3 +  v^3 + 84 u v + 1,
    \end{equation*}
    in particular $h^{2, 1}(W, G) = 0$.
    There is no $3$-dimensional Calabi-Yau manifold $X$ s.t. the relation $h^{p, q}(W, G) = h^{3 - p, q}(X)$.
    Indeed, if $X$ exists, $h^{1,1}(X) = 0$.
    However this means that the Kähler form of $X$ is absent, therefore $X$ cannot be Kähler.

\section{Conclusion}
  \label{sec:conclusion}
  In this paper, we have defined the Landau-Ginzburg orbifold of a nondegenerate quasihomogeneous polynomial
  and its finite symmetry group as a bigraded vector space.
  We calculated the Hodge numbers of the Landau-Ginzburg orbifold of nondegenerate quintic homogeneous polynomials
  with five variables using the formula of the Poincaré polynomial, and proved these Hodge numbers
  have a certain relation with the Hodge numbers of the corresponding geometric counterparts.

  In the following, we comment on some future prospects related to the Landau-Ginzburg orbifold
  which we discussed.

  \subsection*{The Witten index, E-polynomial, and $\Z_{2}$-grading}
    As we have already seen, the Witten index was defined when $h^{p, q}(W, G) \neq 0$ implies $p - q \in \Z$.
    Furthermore, we can define a more generic notion, the \emph{E-polynomial} $E(W, G; u, v)$, by
    \begin{equation*}
      E(W, G; u, v) = P(W, G; -u, -v) = \sum_{p, q \in \Q} (-1)^{p - q} h^{p, q}(W, G) u^{p} v^{q}.
    \end{equation*}
    This is an analogous notion to the E-polynomial for algebraic varieties defined by means of the mixed
    Hodge structure of rational cohomology groups with compact supports~\cite{batyrev1996}.
    The E-polynomial of varieties $X$ has the addition property
    \begin{equation*}
      E(X; u, v) = \sum_{i} E(X_{i}; u, v)
    \end{equation*}
    where $X$ is a disjoint union of locally closed subvarieties $X_{i}$, and the multiplication property
    \begin{equation*}
      E(X; u, v) = E(Y; u, v) E(F; u, v)
    \end{equation*}
    where $\pi \colon X \to Y$ is a locally trivial fibration in Zariski topology and $F$ is the fiber over
    a closed point in $Y$.
    It is natural to expect similar properties for the E-polynomial of Landau-Ginzburg orbifolds.
    For this idea, the notion corresponding to ``direct sum'' or ``fibration'' may also to be defined
    for Landau-Ginzburg orbifolds.

    There is another approach to these concepts.
    Recently, Basalaev, Takahashi and Werner~\cite{takahashi2016} proposed an axiom of the Landau-Ginzburg B-model
    for diagonal symmetry groups, which is equipped with $\Z_{2}$-grading
    $\mathscr{H}_{W, G} = \mathscr{H}_{W, G, \bar{0}} \oplus \mathscr{H}_{W, G, \bar{1}}$
    defined so that $n - n_{g}$ determines the parity of a $g$-twisted sector.
    This $\Z_{2}$-grading should correspond to the bosonic and fermionic state space of a conformal field theory,
    hence we may be able to define the E-polynomial by
    \begin{equation*}
      E(W, G; u, v) = \sum_{p, q \in \Q} \Big( \sum_{g \in S} (-1)^{n - n_{g}} h_{g}^{p, q}(W, G) \Big) u^{p} v^{q}.
    \end{equation*}
    One of the advantages using this definition is that we can define the Witten index by $E(W, G; 1, 1)$
    even if there is a pair $(p, q)$ s.t. $p - q \not\in \Z$ and $h^{p, q}(W, G) \neq 0$.
    Note that if $h^{p, q}(W, G) \neq 0$ implies $p - q \in \Z$ and $G$ is a subgroup of $\SL_{n}(\C)$,
    then two definitions of E-polynomial coinside since we have
    \begin{equation*}
      (-1)^{p - q} = (-1)^{\age(g) - \age(g^{-1})} = (-1)^{\age(g) + \age(g^{-1})} = (-1)^{n - n_{g}}
    \end{equation*}
     from Lemma~\ref{lem:age-ng-relation}.

  \subsection*{Frobenius algebra structure}
    Our definition of the Landau-Ginzburg B-model does not have the product structure.
    We should endow the Landau-Ginzburg orbifold with a product and Frobenius structure induced from
    the operator algebra of the conformal field theory.
    Krawitz's work~\cite{krawitz2010}, which defined the product and Frobenius structure
    for the pair of an invertible polynomial and a diagonal symmetric group,
    will be a signpost to accomplish this goal.
    If we have the Frobenius structure for the Landau-Ginzburg B-model, it will be a first step for
    the search of the proof of the Landau-Ginzburg mirror symmetry.

  \subsection*{Calabi-Yau/Landau-Ginzburg correspondence}
    The main theorem of the paper says that the Landau-Ginzburg orbifold of a nondegenerate quintic homogeneous
    polynomial $W$ with five variables and a finite subgroup $G$ of the symmetry group $G_{W}$,
    s.t. $\langle J \rangle \subset G \subset \SL_{n}(\C)$ has the geometric counterpart.
    What sort of pair of polynomial and its symmetry group has the geometric counterpart?
    Since the central charge $\Hat{c}$ of the Landau-Ginzburg orbifold corresponds to the dimension of
    the counterpart object, the condition $\Hat{c} \in \Z_{\geq 0}$ seems to be a necessary condition.
    This is satisfied if a polynomial has the generalized Calabi-Yau condition.
    In addition, there might be some conditions on the symmetry group $G$.
    For example, the condition $\langle J \rangle \subset G \subset \SL_{n}(\C)$ seems to ensure that
    $h^{p, q}(W, G) \neq 0$ implies $p,q \in \Z_{\geq 0}$ if the Calabi-Yau condition is satisfied.
    However, this seems to be not sufficient in general.

\appendix
\section{Two tables of the Hodge numbers}
  \label{sec:table}
  This section contains Table~\ref{table:quintic-threefold-orbifold} and
  Table~\ref{table:quintic-landau-ginzburg-orbifold}.
  Throughout this section, $\zeta_{n}$ means the primitive $n$-th root of unity and $J$ is the exponential grading operator
  for a quintic homogeneous polynomial with five variables, i.e.
  \begin{equation*}
    J = \diag(\zeta_{5}, \zeta_{5}, \zeta_{5}, \zeta_{5}, \zeta_{5}).
  \end{equation*}

  Table~\ref{table:quintic-threefold-orbifold}, originally given by Yu~\cite{Yu_2016}, shows the orbifold Hodge
  numbers and the orbifold Euler number
  for the pair $(X, \Tilde{G})$ where
  $X$ is a nonsingular quintic threefold and $\Tilde{G}$ is a subgroup of $\Aut(X)$ which fixes
  a nowhere vanishing holomorphic top form on $X$.
  Similarly, Table~\ref{table:quintic-landau-ginzburg-orbifold} gives the Hodge numbers and the Witten index
  for the Landau-Ginzburg orbifold of the pair $(W, G)$ where $W$ is a nondegenerate quintic homogeneous polynomial and
  $G$ is a finite subgroup of $G_{W}$ which satisfy $\langle J \rangle \subset G \subset \SL_{5}(\C)$.
  See Appendix~\ref{sec:hodge-number-calculation} for the detailed calculation algorithm of these Hodge numbers.

  We should explain notation in Table~\ref{table:quintic-threefold-orbifold} in more detail.
  The second column shows generators of $\Tilde{G}$ as subgroups of $\PGL_{5}(\C)$.
  We denote the matrices whose action are permutations of $x_{i}$ by elements of the symmetric group $S_{5}$.
  For example, $(12)(34)$ means
  \begin{equation*} {\renewcommand\arraystretch{0.8}\setlength\arraycolsep{2pt}
    \begin{pmatrix}
      0 & 1 & 0 & 0 & 0 \\
      1 & 0 & 0 & 0 & 0 \\
      0 & 0 & 0 & 1 & 0 \\
      0 & 0 & 1 & 0 & 0 \\
      0 & 0 & 0 & 0 & 1
    \end{pmatrix}.
  } \end{equation*}
  In addition, we use following matrices:
  {\renewcommand\arraystretch{0.9}\setlength\arraycolsep{2pt}
  \begin{align*}
    \mathscr{A}_{1} &= \begin{pmatrix}
      \zeta_{8}^{3} & 0 & 0 & 0 & 0 \\
      0 & \zeta_{8} & 0 & 0 & 0 \\
      0 & 0 & 0 & 1 & 0 \\
      0 & 0 & 1 & 0 & 0 \\
      0 & 0 & 0 & 0 & 1
    \end{pmatrix}, &
    \mathscr{A}_{2} &= \begin{pmatrix}
      0 & 1 & 0 & 0 & 0 \\
      -1 & 0 & 0 & 0 & 0 \\
      0 & 0 & 1 & 0 & 0 \\
      0 & 0 & 0 & 1 & 0 \\
      0 & 0 & 0 & 0 & 1
    \end{pmatrix}, &
    \mathscr{A}_{3} &= \begin{pmatrix}
      -\frac{1}{\sqrt{2}}\zeta_{8} & \frac{1}{\sqrt{2}}\zeta_{8} & 0 & 0 & 0 \\
      \frac{1}{\sqrt{2}}\zeta_{8}^{3} & \frac{1}{\sqrt{2}}\zeta_{8}^{3} & 0 & 0 & 0 \\
      0 & 0 & \zeta_{3} & 0 & 0 \\
      0 & 0 & 0 & \zeta_{3}^{2} & 0 \\
      0 & 0 & 0 & 0 & 1
    \end{pmatrix}, \\
    \mathscr{A}_{4} &= \begin{pmatrix}
      0 & 1 & 0 & 0 & 0 \\
      0 & 0 & 1 & 0 & 0 \\
      1 & 0 & 0 & 0 & 0 \\
      0 & 0 & 0 & \zeta_{3} & 0 \\
      0 & 0 & 0 & 0 & \zeta_{3}^{2}
    \end{pmatrix}, &
    \mathscr{A}_{5} &= \begin{pmatrix}
      \zeta_{3}^{2} & 0 & 0 & 0 & 0 \\
      0 & \zeta_{3} & 0 & 0 & 0 \\
      0 & 0 & 0 & 1 & 0 \\
      0 & 0 & 0 & 0 & 1 \\
      0 & 0 & 1 & 0 & 0
    \end{pmatrix}. & \\
  \end{align*}
  }
  The third column, labeled as $X$, shows examples of quintic threefolds which admit the $\Tilde{G}$-action which fixes a nowhere vanishing $3$-form on $X$.
  $X_1, \dots, X_{14}$ are defined as the set of zeros of the following homogeneous polynomials respectively:
  \begin{align*}
    W_{1} &= x_{1}^{5} + x_{2}^{5} + x_{3}^{5} + x_{4}^{5} + x_{5}^{5}, \\
    W_{2} &= x_{1}^{4}x_{2} + x_{2}^{4}x_{1} + x_{3}^{4} x_{4} + x_{4}^{4} x_{3} + x_{5}^{5}, \\
    W_{3} &= x_{1}^{4}x_{2} + x_{2}^5 + x_{3}^{4}x_{4} + x_{4}^5 + x_{5}^5, \\
    W_{4} &= x_{1}^{4}x_{4} + x_{2}^{4}x_{5} + x_{3}^{4}x_{4} + x_{4}^{5} + x_{5}^{5} + x_{1}x_{2}x_{3}^{3}, \\
    W_{5} &= ((x_{1}^4 + x_{2}^4) + (2 + 4\zeta_{3}^{2})x_{1}^{2}x_{2}^{2})x_{3}+(-(x_{1}^4+x_{2}^4)
             + (2 + 4\zeta_{3}^2)x_{1}^{2}x_{2}^2)x_{4} \\
          &\qquad + x_{3}^{4}x_{4} + x_{4}^{4}x_{3} + x_{5}^{5}, \\
    W_{6} &= x_{1}^{4}x_{2} + x_{2}^{4}x_{3} + x_{3}^{5} + x_{4}^{4}x_{2} + x_{5}^{4}x_{3} + x_{1}x_{4}x_{5}^{3} + x_{2}^{3}x_{5}^{2} + x_{1}x_{3}x_{4}^{3} + x_{1}^{3}x_{3}x_{4}, \\
    W_{7} &= x_{1}^{4}x_{2} + x_{2}^4x_{1} + x_{3}^{4}x_{2} + x_{4}^{4}x_{1} + x_{5}^{5} + x_{1}^{3}x_{3}x_{4} + x_{2}^{3}x_{3}x_{4}, \\
    W_{8} &= ((x_{1}^4 + x_{2}^4) + (2 + 4\zeta_{3}^{2})x_{1}^{2}x_{2}^{2})x_{3}+((x_{1}^4+x_{2}^4)
             - (2 + 4\zeta_{3}^2)x_{1}^{2}x_{2}^2)x_{4} + x_{3}^{4}x_{4} \\
          &\qquad + x_{4}^{4}x_{3} + x_{3}^2x_{4}^2x_{5} + x_{5}^{5}, \\
    W_{9} &= x_{1}^{4}x_{2} + x_{2}^{4}x_{1} + x_{3}^{4}x_{2} + x_{4}^{4}x_{1} + x_{5}^{5} + x_{2}^{3}x_{3}x_{4} - x_{1}^3x_{3}x_{4}, \\
    W_{10} &= x_{1}^{4}x_{2} + x_{2}^{4}x_{3} + x_{3}^{4}x_{1} + x_{4}^{4}x_{5} + x_{5}^{4}x_{4}, \\
    W_{11} &= x_{1}^{4}x_{2} + x_{2}^{4}x_{1} + x_{3}^{5} + x_{4}^{5} + x_{5}^{5}, \\
    W_{12} &= x_{1}^{4}x_{2} + x_{2}^{4}x_{3} + x_{3}^{4}x_{4} + x_{4}^{4}x_{1} + x_{5}^{5}, \\
    W_{13} &= x_{1}^{4}x_{2} + x_{2}^{4}x_{3} + x_{3}^{4}x_{4} + x_{4}^{4}x_{5} + x_{5}^{4}x_{1}, \\
    W_{14} &= x_{1}^{4}x_{2} + x_{2}^{4}x_{3} + x_{3}^{4}x_{1} + x_{4}^{5} + x_{5}^{5}.
  \end{align*}

  {\small\renewcommand\arraystretch{1.5}
    \begin{longtable}{p{\dimexpr 0.06\linewidth-2\tabcolsep}
                  >{\raggedright\let\newline\\\arraybackslash\hspace{0pt}}p{\dimexpr 0.60\linewidth-2\tabcolsep}
                  p{\dimexpr 0.08\linewidth-2\tabcolsep}
                  p{\dimexpr 0.10\linewidth-2\tabcolsep}
                  p{\dimexpr 0.16\linewidth-2\tabcolsep}}
  \caption{The orbifold Hodge numbers and orbifold Euler number of $X/\Tilde{G}$}
  \label{table:quintic-threefold-orbifold}
  \\ \hline
  \# & Generators of $\Tilde{G}$ (as a subgroup of $\PGL_{5}(\C)$) & $X$ & $e(X, \Tilde{G})$ & $(h^{1,1},\;h^{2,1})$ \\ \hline
  \endfirsthead
  \caption[]{\textit{(Continued from previous page)}} \\ \hline
  \# & Generators of $\Tilde{G}$ (as a subgroup of $\PGL_{5}(\C))$ & $X$  & $e(X, \Tilde{G})$ &$(h^{1,1},\;h^{2,1})$ \\ \hline
  \endhead
  \hline \multicolumn{4}{r}{\textit{(Continued on next page)}} \\
  \endfoot
  \hline
  \endlastfoot

  1 & & Any & $-200$ & $(1,101)$ \\
  2 & $(12)(34)$ & $X_{1}$   &$-112$  & $(3,59)$ \\
  3 & $(123)$ & $X_{1}$   &$-88$  & $(5,49)$ \\
  4 & $\diag(\zeta_3,\zeta_3^2,\zeta_3,\zeta_3^2,1)$ &$X_{2}$   &$-56$  & $(5,33)$ \\
  5 & $\diag(\zeta_4,1,\zeta_4^3,1,1)$ &$X_{3}$   &$-80$  & $(7,47)$ \\
  6 & $\diag(\zeta_4,\zeta_4,-1,1,1)$ &$X_{4}$   &$-32$  & $(11,27)$ \\
  7 & $(12)(34)$, $(13)(24)$ &$X_{1}$   &$-56$  & $(7,35)$ \\
  8 & $\diag(1,\zeta_5,\zeta_5^4,1,1)$ &$X_{1}$   &$-88$  & $(5,49)$ \\
  9 & $\diag(\zeta_5,\zeta_5,\zeta_5^4,\zeta_5^4,1)$ &$X_{1}$   &$8$  & $(21,17)$ \\
  10 & $\diag(1,\zeta_5,\zeta_5^2,\zeta_5^3,\zeta_5^4)$ &$X_{1}$   &$-40$  & $(1,21)$ \\
  11 & $\diag(\zeta_3,\zeta_3^2,1,1,1)$, $(12)(34)$ &$X_{2}$   &$-56$  & $(5,33)$ \\
  12 & $\diag(\zeta_3,\zeta_3^2,\zeta_3,\zeta_3^2,1)$,$(12)(34)$ &$X_{2}$   &$-40$  & $(5,25)$ \\
  13 & $\diag(\zeta_3,\zeta_3^2,\zeta_3,\zeta_3^2,1)$,$(13)(24)$ &$X_{2}$   &$-16$  & $(11,19)$ \\
  14 & $\mathscr{A}_{1}$&$X_{5}$   &$8$  & $(23,19)$ \\
  15 & $\diag(\zeta_8,-1,1,\zeta_8^5,\zeta_4^3)$ &$X_{6}$   &$8$  & $(17,13)$ \\
  16 & $\diag(\zeta_4,\zeta_4,-1,1,1)$, $\diag(1,-1,-1,1,1)$ &$X_{4}$   &$-4$  & $(19,21)$ \\
  17 & $(12)(34)$, $\diag(1,1,\zeta_4,\zeta_4^3,1)$ &$X_{7}$   &$-40$  & $(9,29)$ \\
  18 & $\mathscr{A}_{2}$, $\diag(\zeta_4^3,\zeta_4,1,1,1)$ &$X_{8}$   &$-64$  & $(9,41)$ \\
  19 & $\diag(\zeta_3,\zeta_3^2,1,1,1)$, $\diag(1,1,\zeta_3,\zeta_3^2,1)$ &$X_{2}$   &$-8$  & $(17,21)$ \\
  20 & $(12)(34)$,$\diag(\zeta_5,\zeta_5^4,\zeta_5,\zeta_5^4,1)$ &$X_{1}$   &$-8$  & $(13,17)$ \\
  21 & $(12)(34)$, $\diag(\zeta_5,\zeta_5^4,1,1,1)$ &$X_{1}$   &$-56$  & $(5,33)$ \\
  22 & $(12)(34)$,$\diag(\zeta_5,\zeta_5^4,\zeta_5^2,\zeta_5^3,1)$ &$X_{1}$   &$-32$  & $(3,19)$ \\
  23 & $\diag(-1,1,-\zeta_5,\zeta_5,\zeta_5^3)$ &$X_{3}$   &$16$  & $(19,11)$ \\
  24 & $\diag(\zeta_3,\zeta_3^2,\zeta_3,\zeta_3^2,1)$, $\diag(1,1,\zeta_4,\zeta_4^3,1)$ &$X_{9}$   &$16$  & $(23,15)$ \\
  25 & $(12)(34)$, $(13)(24)$, $(123)$ &$X_{1}$   &$-40$  & $(7,27)$ \\
  26 & $\diag(\zeta_3,\zeta_3^2,\zeta_3,\zeta_3^2,1)$,$(13)(24)$,$(12)(34)$ &$X_{2}$   &$-8$  & $(11,15)$ \\
  27 & $\diag(\zeta_{13},\zeta_{13}^9,\zeta_{13}^3,1,1)$ &$X_{10}$   &$88$  & $(49,5)$ \\
  28 & $\diag(\zeta_3,\zeta_3^2,1,1,1)$, $\diag(1,1,\zeta_5,\zeta_5,\zeta_5^3)$ &$X_{2}$   &$88$  & $(49,5)$ \\
  29 & $\diag(\zeta_3,\zeta_3^2,1,1,1)$, $\diag(1,1,\zeta_5,1,\zeta_5^4)$ &$X_{11}$   &$-8$  & $(17,21)$ \\
  30 & $\diag(\zeta_3,\zeta_3^2,\zeta_3,\zeta_3^2,1)$,$\diag(1,1,\zeta_5,\zeta_5,\zeta_5^3)$ &$X_{2}$   &$56$  & $(33,5)$ \\
  31 & $\mathscr{A}_{1}$, $\mathscr{A}_{2}$ &$X_{5}$   &$-8$  & $(17,21)$ \\
  32 & $\diag(\zeta_{17},\zeta_{17}^{-4},\zeta_{17}^{16},\zeta_{17}^4,1)$ &$X_{12}$   &$56$  & $(33,5)$ \\
  33 & $\diag(\zeta_3,\zeta_3^2,1,1,1)$,$\diag(1,1,\zeta_3,\zeta_3^2,1)$, $(13)(24)$ &$X_{2}$   &$8$  & $(17,13)$ \\
  34 & $\diag(\zeta_3,\zeta_3^2,1,1,1)$,$\diag(1,1,\zeta_3,\zeta_3^2,1)$, $(12)(34)$ &$X_{2}$   &$-16$  & $(11,19)$ \\
  35 & $\diag(\zeta_4,1,\zeta_4^3,1,1)$, $\diag(1,1,\zeta_5,\zeta_5,\zeta_5^3)$ &$X_{3}$   &$32$  & $(27,11)$ \\
  36 & $\diag(-\zeta_5,\zeta_5,-\zeta_5^4,\zeta_5^4,1)$,$(13)(24)$ &$X_{3}$   &$8$  & $(15,11)$ \\
  37 & $\mathscr{A}_{2}$, $\mathscr{A}_{3}$&$X_{5}$   &$0$  & $(13,13)$ \\
  38 & $\diag(\zeta_3,\zeta_3^2,\zeta_3,\zeta_3^2,1)$,$\diag(1,1,\zeta_4,\zeta_4^3,1)$, $(12)(34)$ &$X_{7}$   &$8$  & $(17,13)$ \\
  39 & $\diag(\zeta_5,\zeta_5^4,1,1,1)$, $\diag(1,1,\zeta_5,\zeta_5^4,1)$ &$X_{1}$   &$-8$  & $(17,21)$ \\
  40 & $\diag(\zeta_5,\zeta_5^4,1,1,1)$, $\diag(1,1,\zeta_5,\zeta_5,\zeta_5^3)$ &$X_{1}$   &$88$  & $(49,5)$ \\
  41 & $\diag(1,\zeta_5,\zeta_5,\zeta_5^4,\zeta_5^4)$, $\diag(1,1,\zeta_5,\zeta_5,\zeta_5^3)$ &$X_{1}$   &$40$  & $(21,1)$ \\
  42 & $\diag(1,\zeta_5,\zeta_5^2,\zeta_5^3,\zeta_5^4)$,$(12345)$ &$X_{1}$   &$-8$  & $(1,5)$ \\
  43 & $\diag(\zeta_3,\zeta_3^2,\zeta_5,\zeta_5,\zeta_5^3)$,$(12)(34)$ &$X_{2}$   &$56$  & $(33,5)$ \\
  44 & $\diag(\zeta_3,\zeta_3^2,\zeta_{15}^8,\zeta_{15}^{13},\zeta_5^3)$,  $(12)(34)$ &$X_{2}$   &$40$  & $(25,5)$ \\
  45 & $\diag(\zeta_{15}^{11},\zeta_{15}^1,\zeta_{15}^{14},\zeta_{15}^{4},1)$, $(13)(24)$ &$X_{2}$   &$40$  & $(25,5)$ \\
  46 & $\diag(\zeta_3,\zeta_3^2,\zeta_5,\zeta_5^4,1)$,  $(12)(34)$ &$X_{11}$   &$-16$  & $(11,19)$ \\
  47 & $\diag(\zeta_{15}^{11},\zeta_{15}^1,\zeta_{15}^{14},\zeta_{15}^{4},1)$, $(14)(23)$ &$X_{2}$   &$16$  & $(19,11)$ \\
  48 & $\diag(\zeta_{17},\zeta_{17}^{-4},\zeta_{17}^{-1},\zeta_{17}^{4},1)$, $(13)(24)$ &$X_{12}$   &$16$  & $(19,11)$ \\
  49 & $\diag(\zeta_{3},\zeta_{3}^2,1,1,1)$,$\diag(1,1,\zeta_{3},\zeta_{3}^{2},1)$, $(14)(23)$,$(13)(24)$ &$X_{2}$   &$16$  & $(17,9)$ \\
  50 & $\diag(\zeta_{13},\zeta_{13}^9,\zeta_{13}^{3},1,1)$, $(123)$ &$X_{10}$   &$8$  & $(21,17)$ \\
  51 & $\diag(\zeta_{13},\zeta_{13}^9,\zeta_{13}^{3},1,1)$, $\mathscr{A}_{4}$ &$X_{10}$   &$40$  & $(21,1)$ \\
  52 & $\diag(\zeta_{13},\zeta_{13}^9,\zeta_{13}^{3},1,1)$, $\diag(1,1,1,\zeta_3,\zeta_3^2)$ &$X_{10}$   &$200$  & $(101,1)$ \\
  53 & $\diag(\zeta_{20}^9,\zeta_{5},\zeta_{20}^{11},\zeta_{5}^{4},1)$, $(13)(24)$ &$X_{3}$   &$16$  & $(19,11)$ \\
  54 & $\diag(\zeta_{41},\zeta_{41}^{-4},\zeta_{41}^{16},\zeta_{41}^{18},\zeta_{41}^{10})$ &$X_{13}$   &$200$  & $(101,1)$ \\
  55 & $\diag(\zeta_{3},\zeta_{3}^2,\zeta_5,\zeta_5,\zeta_5^3)$, $\diag(1,1,\zeta_{3},\zeta_{3}^2,1)$ &$X_{2}$   &$200$  & $(101,1)$ \\
  56 & $\mathscr{A}_{1}$, $\mathscr{A}_{2}$, $\mathscr{A}_{3}$ &$X_{5}$   &$24$  & $(19,7)$ \\
  57 & $\diag(1,1,\zeta_{5},\zeta_{5},\zeta_{5}^{3})$,$\diag(\zeta_{5},\zeta_{5}^4,\zeta_{5},\zeta_{5}^{4},1)$, $(12)(34)$ &$X_{1}$   &$8$  & $(17,13)$ \\
  58 & $\diag(1,1,\zeta_{5},\zeta_{5},\zeta_{5}^{3})$, $\diag(\zeta_{5},\zeta_{5}^4,1,1,1)$, $(12)(34)$ &$X_{1}$   &$56$  & $(33,5)$ \\
  59 & $\diag(1,1,\zeta_{5},\zeta_{5},\zeta_{5}^{3})$,$\diag(\zeta_{5},\zeta_{5}^4,\zeta_{5}^2,\zeta_{5}^{3},1)$, $(12)(34)$ &$X_{1}$   &$32$  & $(19,3)$ \\
  60 & $\diag(1,1,\zeta_{5},\zeta_{5}^4,1)$, $\diag(\zeta_{5},\zeta_{5}^4,1,1,1)$, $(12)(34)$ &$X_{1}$   &$-16$  & $(11,19)$ \\
  61 & $\diag(\zeta_{51}^{20},\zeta_{51}^{22},\zeta_{51}^{14},\zeta_{51}^{46},1)$ &$X_{12}$   &$200$  & $(101,1)$ \\
  62 & $(12345)$,  $(123)$ &$X_{1}$   &$-16$  & $(5,13)$ \\
  63 & $\diag(\zeta_{15}^8,\zeta_{15}^{13},\zeta_{15}^2,\zeta_{15}^{7},1)$, $(12)(34)$, $(13)(24)$ &$X_{2}$   &$32$  & $(21,5)$ \\
  64 & $\diag(1,1,1,\zeta_{5},\zeta_{5}^{4})$, $\diag(\zeta_{13},\zeta_{13}^{-4},\zeta_{13}^3,1,1)$ &$X_{14}$   &$200$  & $(101,1)$ \\
  65 & $\diag(1,1,\zeta_{5},1,\zeta_{5}^{4})$, $(345)$ &$X_{1}$   &$8$  & $(21,17)$ \\
  66 & $\diag(1,1,\zeta_{5},1,\zeta_{5}^{4})$, $\mathscr{A}_{5}$ &$X_{11}$   &$40$  & $(21,1)$ \\
  67 & $\diag(1,1,\zeta_{5},1,\zeta_{5}^{4})$, $\diag(\zeta_3,\zeta_3^2,1,\zeta_5,\zeta_5^4)$ &$X_{11}$   &$200$  & $(101,1)$ \\
  68 & $\diag(\zeta_{15}^{8},\zeta_{15}^{13},\zeta_{5}^4,\zeta_{5}^{4},1)$,$\diag(1,1,\zeta_{3},\zeta_{3}^2,1)$, $(14)(23)$&$X_{2}$   &$112$  & $(59,3)$ \\
  69 & $\diag(\zeta_3,\zeta_3^2,\zeta_{5},\zeta_5, \zeta_{5}^{3})$,$\diag(1,1,\zeta_{3},\zeta_{3}^2,1)$, $(12)(34)$ &$X_{2}$   &$112$  & $(59,3)$ \\
  70 & $\diag(\zeta_5,\zeta_5,\zeta_{5}^4,\zeta_{5}^{4},1)$,$\diag(\zeta_5,\zeta_5^4,\zeta_{5},\zeta_{5}^{4},1)$, $(12)(34)$, $(13)(24)$ &$X_{1}$   &$16$  & $(17,9)$ \\
  71 & $\diag(\zeta_3,\zeta_3^2,\zeta_{3},\zeta_{3}^{2},1)$, $\diag(\zeta_{17},\zeta_{17}^{-4},\zeta_{17}^{16},\zeta_{17}^{4},1)$, $(13)(24)$ &$X_{12}$   &$112$  & $(59,3)$ \\
  72 & $\diag(\zeta_{13},\zeta_{13}^{-4},\zeta_{13}^3,\zeta_{3},\zeta_3^2)$, $(123)$ &$X_{10}$   &$88$  & $(49,5)$ \\
  73 & $\diag(1, \zeta_5,\zeta_5^4,\zeta_{5}^4,\zeta_{5})$,   $(12345)$ &$X_{1}$   &$8$  & $(5,1)$ \\
  74 & $\diag(1,\zeta_5,1,1,\zeta_{5}^4)$, $\diag(1,1,\zeta_5,1,\zeta_{5}^{4})$, $\diag(1,1,1,\zeta_5,\zeta_{5}^{4})$ &$X_{1}$   &$200$  & $(101,1)$ \\
  75 & $\diag(1,1, \zeta_5,1,\zeta_{5}^4)$, $\diag(1,1,1,\zeta_5,\zeta_{5}^{4})$,  $(345)$, $(12)(34)$ &$X_{1}$   &$16$  & $(19,11)$ \\
  76 & $\diag(1,1, \zeta_5,1,\zeta_{5}^4)$, $\diag(1,1,1,\zeta_5,\zeta_{5}^{4})$,  $\mathscr{A}_{5}$, $(12)(34)$ &$X_{11}$   &$32$  & $(19,3)$ \\
  77 & $\diag(1,1, \zeta_5,\zeta_5,\zeta_{5}^3)$,$\diag(\zeta_3,\zeta_3^2,\zeta_5,\zeta_{5}^{4},1)$, $(12)(34)$ &$X_{11}$   &$112$  & $(59,3)$ \\
  78 & $\diag(1,1, \zeta_3,\zeta_{3}^2,1)$, $\diag(\zeta_{15}^8,\zeta_{15}^{13},\zeta_5^4,\zeta_{5}^{4},1)$,  $(12)(34)$, $(13)(24)$ &$X_{2}$   &$80$  & $(41,1)$ \\
  79 & $\diag(\zeta_{13},\zeta_{13}^{-4}, \zeta_{13}^3,\zeta_5,\zeta_{5}^4)$,  $(123)$ &$X_{14}$   &$88$  & $(49,5)$ \\
  80 & $\diag(\zeta_{41},\zeta_{41}^{-4},\zeta_{41}^{16},\zeta_{41}^{18},\zeta_{41}^{10})$, $(12345)$ &$X_{13}$   &$40$  & $(21,1)$ \\
  81 & $\diag(\zeta_3,\zeta_3^2, \zeta_5,1,\zeta_5^4)$, $\diag(1,1,1,\zeta_{5},\zeta_5^4)$,  $(345)$ &$X_{11}$   &$88$  & $(49,5)$ \\
  82 & $\diag(1,\zeta_5, \zeta_5^4,\zeta_{5}^4,\zeta_5)$, $\diag(1,\zeta_{5},\zeta_5^2,\zeta_{5}^{3},\zeta_5^4)$,  $(12345)$, $(25)(34)$ &$X_{1}$   &$16$  & $(11,3)$ \\
  83 & $\diag(1,\zeta_5, \zeta_5,\zeta_{5}^4,\zeta_5^4)$, $\diag(1,\zeta_{5},\zeta_5^4,1,1)$, $\diag(1,1,1,\zeta_{5},\zeta_5^4)$,  $(23)(45)$ &$X_{1}$   &$112$  & $(59,3)$ \\
  84 & $\diag(\zeta_5, \zeta_5^4,1,1,1)$, $\diag(1,1,\zeta_{5},1,\zeta_5^4)$, $\diag(1,1,1,\zeta_{5},\zeta_5^4)$,  $(345)$ &$X_{1}$   &$88$  & $(49,5)$ \\
  85 & $\diag(\zeta_3,\zeta_3^2,\zeta_5,1,\zeta_{5}^4)$, $\diag(1,1,1,\zeta_{5},\zeta_5^4)$, $(345)$,  $(12)(34)$ &$X_{11}$   &$56$  & $(33,5)$ \\
  86 & $\diag(1,\zeta_5, \zeta_5,\zeta_{5}^4,\zeta_5^4)$, $\diag(1,\zeta_{5},\zeta_5^4,1,1)$, $\diag(1,1,1,\zeta_{5},\zeta_5^4)$,  $(23)(45)$, $(24)(35)$ &$X_{1}$   &$80$  & $(41,1)$ \\
  87 & $\diag(1,\zeta_5,1,1,\zeta_{5}^4)$, $\diag(1,1,\zeta_5,1,\zeta_{5}^{4})$, $\diag(1,1,1,\zeta_5,\zeta_{5}^{4})$, $(12345)$ &$X_{1}$   &$40$  & $(21,1)$ \\
  88 & $\diag(\zeta_5, \zeta_5^4,1,1,1)$, $\diag(1,1,\zeta_{5},1,\zeta_5^4)$, $\diag(1,1,1,\zeta_{5},\zeta_5^4)$,  $(345)$, $(12)(34)$ &$X_{1}$   &$56$  & $(33,5)$ \\
  89 & $\diag(1,\zeta_5,1,1,\zeta_{5}^4)$, $\diag(1,1,\zeta_5,1,\zeta_{5}^{4})$, $\diag(1,1,1,\zeta_5,\zeta_{5}^{4})$, $(12345)$, $(25)(34)$ &$X_{1}$   &$32$  & $(19,3)$ \\
  90 & $\diag(1,\zeta_5, \zeta_5,\zeta_{5}^4,\zeta_5^4)$, $\diag(1,\zeta_{5},\zeta_5^4,1,1)$, $\diag(1,1,1,\zeta_{5},\zeta_5^4)$,  $(23)(45)$, $(24)(35)$, $(345)$ &$X_{1}$   &$48$  & $(29,5)$ \\
  91 & $\diag(1,\zeta_5,1,1,\zeta_{5}^4)$, $\diag(1,1,\zeta_5,1,\zeta_{5}^{4})$, $\diag(1,1,1,\zeta_5,\zeta_{5}^{4})$, $(12345)$, $(345)$ &$X_{1}$   &$24$  & $(15,3)$ \\

\end{longtable}

  }

  Table~\ref{table:quintic-landau-ginzburg-orbifold} uses the same notation as in
  Table~\ref{table:quintic-threefold-orbifold}.
  The second column shows generators of $G$ as a subgroup of $\GL_{5}(\C)$ and the third column shows an example
  of the nondegenerate quintic homogeneous polynomials $W$ whose maximal symmetry group contains $G$.

  {\small\renewcommand\arraystretch{1.5}
    \begin{longtable}{p{\dimexpr 0.06\linewidth-2\tabcolsep}
                  >{\raggedright\let\newline\\\arraybackslash\hspace{0pt}}p{\dimexpr 0.60\linewidth-2\tabcolsep}
                  p{\dimexpr 0.08\linewidth-2\tabcolsep}
                  p{\dimexpr 0.10\linewidth-2\tabcolsep}
                  p{\dimexpr 0.16\linewidth-2\tabcolsep}}
  \caption{The Hodge numbers and Witten indices of Landau-Ginzburg orbifold}
  \label{table:quintic-landau-ginzburg-orbifold}
  \\ \hline
  \# & Generators of $G$ (as a subgroup of $\GL_{5}(\C)$) & $W$ & $\tr(-1)^{F}$ & $(h^{1,1},\;h^{2,1})$ \\ \hline
  \endfirsthead
  \caption[]{\textit{(Continued from previous page)}} \\ \hline
  \# & Generators of $G$ (as a subgroup of $\GL_{5}(\C))$ & $W$  & $\tr(-1)^{F}$ &$(h^{1,1},\;h^{2,1})$ \\ \hline
  \endhead
  \hline \multicolumn{4}{r}{\textit{(Continued on next page)}} \\
  \endfoot
  \hline
  \endlastfoot

  1 & $J$ & Any & $200$ & $(101, 1)$ \\
  2 & $J$, $(12)(34)$ & $W_{1}$   &$112$  & $(59, 3)$ \\
  3 & $J$, $(123)$ & $W_{1}$   &$88$  & $(49, 5)$ \\
  4 & $J$, $\diag(\zeta_3,\zeta_3^2,\zeta_3,\zeta_3^2,1)$ &$W_{2}$   &$56$  & $(33, 5)$ \\
  5 & $J$, $\diag(\zeta_4,1,\zeta_4^3,1,1)$ &$W_{3}$   &$80$  & $(47, 7)$ \\
  6 & $J$, $\diag(\zeta_4,\zeta_4,-1,1,1)$ &$W_{4}$   &$32$  & $(27, 11)$ \\
  7 & $J$, $(12)(34)$, $(13)(24)$ &$W_{1}$   &$56$  & $(35, 7)$ \\
  8 & $J$, $\diag(1,\zeta_5,\zeta_5^4,1,1)$ &$W_{1}$   &$88$  & $(49, 5)$ \\
  9 & $J$, $\diag(\zeta_5,\zeta_5,\zeta_5^4,\zeta_5^4,1)$ &$W_{1}$   &$-8$  & $(17, 21)$ \\
  10 & $J$, $\diag(1,\zeta_5,\zeta_5^2,\zeta_5^3,\zeta_5^4)$ &$W_{1}$   &$40$  & $(21, 1)$ \\
  11 & $J$, $\diag(\zeta_3,\zeta_3^2,1,1,1)$, $(12)(34)$ &$W_{2}$   &$56$  & $(33, 5)$ \\
  12 & $J$, $\diag(\zeta_3,\zeta_3^2,\zeta_3,\zeta_3^2,1)$,$(12)(34)$ &$W_{2}$   &$40$  & $(25, 5)$ \\
  13 & $J$, $\diag(\zeta_3,\zeta_3^2,\zeta_3,\zeta_3^2,1)$,$(13)(24)$ &$W_{2}$   &$16$  & $(19, 11)$ \\
  14 & $J$, $\mathscr{A}_{1}$&$W_{5}$   &$-8$  & $(19, 23)$ \\
  15 & $J$, $\diag(\zeta_8,-1,1,\zeta_8^5,\zeta_4^3)$ &$W_{6}$   &$-8$  & $(13, 17)$ \\
  16 & $J$, $\diag(\zeta_4,\zeta_4,-1,1,1)$, $\diag(1,-1,-1,1,1)$ &$W_{4}$   &$4$  & $(21, 19)$ \\
  17 & $J$, $(12)(34)$, $\diag(1,1,\zeta_4,\zeta_4^3,1)$ &$W_{7}$   &$40$  & $(29, 9)$ \\
  18 & $J$, $\mathscr{A}_{2}$, $\diag(\zeta_4^3,\zeta_4,1,1,1)$ &$W_{8}$   &$64$  & $(41, 9)$ \\
  19 & $J$, $\diag(\zeta_3,\zeta_3^2,1,1,1)$, $\diag(1,1,\zeta_3,\zeta_3^2,1)$ &$W_{2}$   &$8$  & $(21, 17)$ \\
  20 & $J$, $(12)(34)$,$\diag(\zeta_5,\zeta_5^4,\zeta_5,\zeta_5^4,1)$ &$W_{1}$   &$8$  & $(17, 13)$ \\
  21 & $J$, $(12)(34)$, $\diag(\zeta_5,\zeta_5^4,1,1,1)$ &$W_{1}$   &$56$  & $(33, 5)$ \\
  22 & $J$, $(12)(34)$,$\diag(\zeta_5,\zeta_5^4,\zeta_5^2,\zeta_5^3,1)$ &$W_{1}$   &$32$  & $(19, 3)$ \\
  23 & $J$, $\diag(-1,1,-\zeta_5,\zeta_5,\zeta_5^3)$ &$W_{3}$   &$-16$  & $(11, 19)$ \\
  24 & $J$, $\diag(\zeta_3,\zeta_3^2,\zeta_3,\zeta_3^2,1)$, $\diag(1,1,\zeta_4,\zeta_4^3,1)$ &$W_{9}$   &$-16$  & $(15, 23)$ \\
  25 & $J$, $(12)(34)$, $(13)(24)$, $(123)$ &$W_{1}$   &$40$  & $(27, 7)$ \\
  26 & $J$, $\diag(\zeta_3,\zeta_3^2,\zeta_3,\zeta_3^2,1)$,$(13)(24)$,$(12)(34)$ &$W_{2}$   &$8$  & $(15, 11)$ \\
  27 & $J$, $\diag(\zeta_{13},\zeta_{13}^9,\zeta_{13}^3,1,1)$ &$W_{10}$   &$-88$  & $(5, 49)$ \\
  28 & $J$, $\diag(\zeta_3,\zeta_3^2,1,1,1)$, $\diag(1,1,\zeta_5,\zeta_5,\zeta_5^3)$ &$W_{2}$   &$-88$  & $(5, 49)$ \\
  29 & $J$, $\diag(\zeta_3,\zeta_3^2,1,1,1)$, $\diag(1,1,\zeta_5,1,\zeta_5^4)$ &$W_{11}$   &$8$  & $(21, 17)$ \\
  30 & $J$, $\diag(\zeta_3,\zeta_3^2,\zeta_3,\zeta_3^2,1)$,$\diag(1,1,\zeta_5,\zeta_5,\zeta_5^3)$ &$W_{2}$   &$-56$  & $(5, 33)$ \\
  31 & $J$, $\mathscr{A}_{1}$, $\mathscr{A}_{2}$ &$W_{5}$   &$8$  & $(21, 17)$ \\
  32 & $J$, $\diag(\zeta_{17},\zeta_{17}^{-4},\zeta_{17}^{16},\zeta_{17}^4,1)$ &$W_{12}$   &$-56$  & $(5, 33)$ \\
  33 & $J$, $\diag(\zeta_3,\zeta_3^2,1,1,1)$,$\diag(1,1,\zeta_3,\zeta_3^2,1)$, $(13)(24)$ &$W_{2}$   &$-8$  & $(13, 17)$ \\
  34 & $J$, $\diag(\zeta_3,\zeta_3^2,1,1,1)$,$\diag(1,1,\zeta_3,\zeta_3^2,1)$, $(12)(34)$ &$W_{2}$   &$16$  & $(19, 11)$ \\
  35 & $J$, $\diag(\zeta_4,1,\zeta_4^3,1,1)$, $\diag(1,1,\zeta_5,\zeta_5,\zeta_5^3)$ &$W_{3}$   &$-32$  & $(11, 27)$ \\
  36 & $J$, $\diag(-\zeta_5,\zeta_5,-\zeta_5^4,\zeta_5^4,1)$,$(13)(24)$ &$W_{3}$   &$-8$  & $(11, 15)$ \\
  37 & $J$, $\mathscr{A}_{2}$, $\mathscr{A}_{3}$&$W_{5}$   &$-0$  & $(13, 13)$ \\
  38 & $J$, $\diag(\zeta_3,\zeta_3^2,\zeta_3,\zeta_3^2,1)$,$\diag(1,1,\zeta_4,\zeta_4^3,1)$, $(12)(34)$ &$W_{7}$   &$-8$  & $(13, 17)$ \\
  39 & $J$, $\diag(\zeta_5,\zeta_5^4,1,1,1)$, $\diag(1,1,\zeta_5,\zeta_5^4,1)$ &$W_{1}$   &$8$  & $(21, 17)$ \\
  40 & $J$, $\diag(\zeta_5,\zeta_5^4,1,1,1)$, $\diag(1,1,\zeta_5,\zeta_5,\zeta_5^3)$ &$W_{1}$   &$-88$  & $(5, 49)$ \\
  41 & $J$, $\diag(1,\zeta_5,\zeta_5,\zeta_5^4,\zeta_5^4)$, $\diag(1,1,\zeta_5,\zeta_5,\zeta_5^3)$ &$W_{1}$   &$-40$  & $(1, 21)$ \\
  42 & $\diag(1,\zeta_5,\zeta_5^2,\zeta_5^3,\zeta_5^4)$,$(12345)$ &$W_{1}$   &$8$  & $(5, 1)$ \\
  43 & $J$, $\diag(\zeta_3,\zeta_3^2,\zeta_5,\zeta_5,\zeta_5^3)$,$(12)(34)$ &$W_{2}$   &$-56$  & $(5, 33)$ \\
  44 & $J$, $\diag(\zeta_3,\zeta_3^2,\zeta_{15}^8,\zeta_{15}^{13},\zeta_5^3)$,  $(12)(34)$ &$W_{2}$   &$-40$  & $(5, 25)$ \\
  45 & $J$, $\diag(\zeta_{15}^{11},\zeta_{15}^1,\zeta_{15}^{14},\zeta_{15}^{4},1)$, $(13)(24)$ &$W_{2}$   &$-40$  & $(5, 25)$ \\
  46 & $J$, $\diag(\zeta_3,\zeta_3^2,\zeta_5,\zeta_5^4,1)$,  $(12)(34)$ &$W_{11}$   &$16$  & $(19, 11)$ \\
  47 & $J$, $\diag(\zeta_{15}^{11},\zeta_{15}^1,\zeta_{15}^{14},\zeta_{15}^{4},1)$, $(14)(23)$ &$W_{2}$   &$-16$  & $(11, 19)$ \\
  48 & $J$, $\diag(\zeta_{17},\zeta_{17}^{-4},\zeta_{17}^{-1},\zeta_{17}^{4},1)$, $(13)(24)$ &$W_{12}$   &$-16$  & $(11, 19)$ \\
  49 & $J$, $\diag(\zeta_{3},\zeta_{3}^2,1,1,1)$,$\diag(1,1,\zeta_{3},\zeta_{3}^{2},1)$, $(14)(23)$,$(13)(24)$ &$W_{2}$   &$-16$  & $(9, 17)$ \\
  50 & $J$, $\diag(\zeta_{13},\zeta_{13}^9,\zeta_{13}^{3},1,1)$, $(123)$ &$W_{10}$   &$-8$  & $(17, 21)$ \\
  51 & $J$, $\diag(\zeta_{13},\zeta_{13}^9,\zeta_{13}^{3},1,1)$, $\mathscr{A}_{4}$ &$W_{10}$   &$-40$  & $(1, 21)$ \\
  52 & $J$, $\diag(\zeta_{13},\zeta_{13}^9,\zeta_{13}^{3},1,1)$, $\diag(1,1,1,\zeta_3,\zeta_3^2)$ &$W_{10}$   &$-200$  & $(1, 101)$ \\
  53 & $J$, $\diag(\zeta_{20}^9,\zeta_{5},\zeta_{20}^{11},\zeta_{5}^{4},1)$, $(13)(24)$ &$W_{3}$   &$-16$  & $(11, 19)$ \\
  54 & $J$, $\diag(\zeta_{41},\zeta_{41}^{-4},\zeta_{41}^{16},\zeta_{41}^{18},\zeta_{41}^{10})$ &$W_{13}$   &$-200$  & $(1, 101)$ \\
  55 & $J$, $\diag(\zeta_{3},\zeta_{3}^2,\zeta_5,\zeta_5,\zeta_5^3)$, $\diag(1,1,\zeta_{3},\zeta_{3}^2,1)$ &$W_{2}$   &$-200$  & $(1, 101)$ \\
  56 & $J$, $\mathscr{A}_{1}$, $\mathscr{A}_{2}$, $\mathscr{A}_{3}$ &$W_{5}$   &$-24$  & $(7, 19)$ \\
  57 & $J$, $\diag(1,1,\zeta_{5},\zeta_{5},\zeta_{5}^{3})$,$\diag(\zeta_{5},\zeta_{5}^4,\zeta_{5},\zeta_{5}^{4},1)$, $(12)(34)$ &$W_{1}$   &$-8$  & $(13, 17)$ \\
  58 & $J$, $\diag(1,1,\zeta_{5},\zeta_{5},\zeta_{5}^{3})$, $\diag(\zeta_{5},\zeta_{5}^4,1,1,1)$, $(12)(34)$ &$W_{1}$   &$-56$  & $(5, 33)$ \\
  59 & $J$, $\diag(1,1,\zeta_{5},\zeta_{5},\zeta_{5}^{3})$,$\diag(\zeta_{5},\zeta_{5}^4,\zeta_{5}^2,\zeta_{5}^{3},1)$, $(12)(34)$ &$W_{1}$   &$-32$  & $(3, 19)$ \\
  60 & $J$, $\diag(1,1,\zeta_{5},\zeta_{5}^4,1)$, $\diag(\zeta_{5},\zeta_{5}^4,1,1,1)$, $(12)(34)$ &$W_{1}$   &$16$  & $(19, 11)$ \\
  61 & $J$, $\diag(\zeta_{51}^{20},\zeta_{51}^{22},\zeta_{51}^{14},\zeta_{51}^{46},1)$ &$W_{12}$   &$-200$  & $(1, 101)$ \\
  62 & $J$, $(12345)$,  $(123)$ &$W_{1}$   &$16$  & $(13, 5)$ \\
  63 & $J$, $\diag(\zeta_{15}^8,\zeta_{15}^{13},\zeta_{15}^2,\zeta_{15}^{7},1)$, $(12)(34)$, $(13)(24)$ &$W_{2}$   &$-32$  & $(5, 21)$ \\
  64 & $J$, $\diag(1,1,1,\zeta_{5},\zeta_{5}^{4})$, $\diag(\zeta_{13},\zeta_{13}^{-4},\zeta_{13}^3,1,1)$ &$W_{14}$   &$-200$  & $(1, 101)$ \\
  65 & $J$, $\diag(1,1,\zeta_{5},1,\zeta_{5}^{4})$, $(345)$ &$W_{1}$   &$-8$  & $(17, 21)$ \\
  66 & $J$, $\diag(1,1,\zeta_{5},1,\zeta_{5}^{4})$, $\mathscr{A}_{5}$ &$W_{11}$   &$-40$  & $(1, 21)$ \\
  67 & $J$, $\diag(1,1,\zeta_{5},1,\zeta_{5}^{4})$, $\diag(\zeta_3,\zeta_3^2,1,\zeta_5,\zeta_5^4)$ &$W_{11}$   &$-200$  & $(1, 101)$ \\
  68 & $J$, $\diag(\zeta_{15}^{8},\zeta_{15}^{13},\zeta_{5}^4,\zeta_{5}^{4},1)$,$\diag(1,1,\zeta_{3},\zeta_{3}^2,1)$, $(14)(23)$&$W_{2}$   &$-112$  & $(3, 59)$ \\
  69 & $J$, $\diag(\zeta_3,\zeta_3^2,\zeta_{5},\zeta_5, \zeta_{5}^{3})$,$\diag(1,1,\zeta_{3},\zeta_{3}^2,1)$, $(12)(34)$ &$W_{2}$   &$-112$  & $(3, 59)$ \\
  70 & $J$, $\diag(\zeta_5,\zeta_5,\zeta_{5}^4,\zeta_{5}^{4},1)$,$\diag(\zeta_5,\zeta_5^4,\zeta_{5},\zeta_{5}^{4},1)$, $(12)(34)$, $(13)(24)$ &$W_{1}$   &$-16$  & $(9, 17)$ \\
  71 & $J$, $\diag(\zeta_3,\zeta_3^2,\zeta_{3},\zeta_{3}^{2},1)$, $\diag(\zeta_{17},\zeta_{17}^{-4},\zeta_{17}^{16},\zeta_{17}^{4},1)$, $(13)(24)$ &$W_{12}$   &$-112$  & $(3, 59)$ \\
  72 & $J$, $\diag(\zeta_{13},\zeta_{13}^{-4},\zeta_{13}^3,\zeta_{3},\zeta_3^2)$, $(123)$ &$W_{10}$   &$-88$  & $(5, 49)$ \\
  73 & $\diag(1, \zeta_5,\zeta_5^4,\zeta_{5}^4,\zeta_{5})$,   $(12345)$ &$W_{1}$   &$-8$  & $(1, 5)$ \\
  74 & $J$, $\diag(1,\zeta_5,1,1,\zeta_{5}^4)$, $\diag(1,1,\zeta_5,1,\zeta_{5}^{4})$, $\diag(1,1,1,\zeta_5,\zeta_{5}^{4})$ &$W_{1}$   &$-200$  & $(1, 101)$ \\
  75 & $J$, $\diag(1,1, \zeta_5,1,\zeta_{5}^4)$, $\diag(1,1,1,\zeta_5,\zeta_{5}^{4})$,  $(345)$, $(12)(34)$ &$W_{1}$   &$-16$  & $(11, 19)$ \\
  76 & $J$, $\diag(1,1, \zeta_5,1,\zeta_{5}^4)$, $\diag(1,1,1,\zeta_5,\zeta_{5}^{4})$,  $\mathscr{A}_{5}$, $(12)(34)$ &$W_{11}$   &$-32$  & $(3, 19)$ \\
  77 & $J$, $\diag(1,1, \zeta_5,\zeta_5,\zeta_{5}^3)$,$\diag(\zeta_3,\zeta_3^2,\zeta_5,\zeta_{5}^{4},1)$, $(12)(34)$ &$W_{11}$   &$-112$  & $(3, 59)$ \\
  78 & $J$, $\diag(1,1, \zeta_3,\zeta_{3}^2,1)$, $\diag(\zeta_{15}^8,\zeta_{15}^{13},\zeta_5^4,\zeta_{5}^{4},1)$,  $(12)(34)$, $(13)(24)$ &$W_{2}$   &$-80$  & $(1, 41)$ \\
  79 & $J$, $\diag(\zeta_{13},\zeta_{13}^{-4}, \zeta_{13}^3,\zeta_5,\zeta_{5}^4)$,  $(123)$ &$W_{14}$   &$-88$  & $(5, 49)$ \\
  80 & $J$, $\diag(\zeta_{41},\zeta_{41}^{-4},\zeta_{41}^{16},\zeta_{41}^{18},\zeta_{41}^{10})$, $(12345)$ &$W_{13}$   &$-40$  & $(1, 21)$ \\
  81 & $J$, $\diag(\zeta_3,\zeta_3^2, \zeta_5,1,\zeta_5^4)$, $\diag(1,1,1,\zeta_{5},\zeta_5^4)$,  $(345)$ &$W_{11}$   &$-88$  & $(5, 49)$ \\
  82 & $\diag(1,\zeta_5, \zeta_5^4,\zeta_{5}^4,\zeta_5)$, $\diag(1,\zeta_{5},\zeta_5^2,\zeta_{5}^{3},\zeta_5^4)$,  $(12345)$, $(25)(34)$ &$W_{1}$   &$-16$  & $(3, 11)$ \\
  83 & $J$, $\diag(1,\zeta_5, \zeta_5,\zeta_{5}^4,\zeta_5^4)$, $\diag(1,\zeta_{5},\zeta_5^4,1,1)$, $\diag(1,1,1,\zeta_{5},\zeta_5^4)$,  $(23)(45)$ &$W_{1}$   &$-112$  & $(3, 59)$ \\
  84 & $J$, $\diag(\zeta_5, \zeta_5^4,1,1,1)$, $\diag(1,1,\zeta_{5},1,\zeta_5^4)$, $\diag(1,1,1,\zeta_{5},\zeta_5^4)$,  $(345)$ &$W_{1}$   &$-88$  & $(5, 49)$ \\
  85 & $J$, $\diag(\zeta_3,\zeta_3^2,\zeta_5,1,\zeta_{5}^4)$, $\diag(1,1,1,\zeta_{5},\zeta_5^4)$, $(345)$,  $(12)(34)$ &$W_{11}$   &$-56$  & $(5, 33)$ \\
  86 & $J$, $\diag(1,\zeta_5, \zeta_5,\zeta_{5}^4,\zeta_5^4)$, $\diag(1,\zeta_{5},\zeta_5^4,1,1)$, $\diag(1,1,1,\zeta_{5},\zeta_5^4)$,  $(23)(45)$, $(24)(35)$ &$W_{1}$   &$-80$  & $(1, 41)$ \\
  87 & $\diag(1,\zeta_5,1,1,\zeta_{5}^4)$, $\diag(1,1,\zeta_5,1,\zeta_{5}^{4})$, $\diag(1,1,1,\zeta_5,\zeta_{5}^{4})$, $(12345)$ &$W_{1}$   &$-40$  & $(1, 21)$ \\
  88 & $J$, $\diag(\zeta_5, \zeta_5^4,1,1,1)$, $\diag(1,1,\zeta_{5},1,\zeta_5^4)$, $\diag(1,1,1,\zeta_{5},\zeta_5^4)$,  $(345)$, $(12)(34)$ &$W_{1}$   &$-56$  & $(5, 33)$ \\
  89 & $\diag(1,\zeta_5,1,1,\zeta_{5}^4)$, $\diag(1,1,\zeta_5,1,\zeta_{5}^{4})$, $\diag(1,1,1,\zeta_5,\zeta_{5}^{4})$, $(12345)$, $(25)(34)$ &$W_{1}$   &$-32$  & $(3, 19)$ \\
  90 & $J$, $\diag(1,\zeta_5, \zeta_5,\zeta_{5}^4,\zeta_5^4)$, $\diag(1,\zeta_{5},\zeta_5^4,1,1)$, $\diag(1,1,1,\zeta_{5},\zeta_5^4)$,  $(23)(45)$, $(24)(35)$, $(345)$ &$W_{1}$   &$-48$  & $(5, 29)$ \\
  91 & $\diag(1,\zeta_5,1,1,\zeta_{5}^4)$, $\diag(1,1,\zeta_5,1,\zeta_{5}^{4})$, $\diag(1,1,1,\zeta_5,\zeta_{5}^{4})$, $(12345)$, $(345)$ &$W_{1}$   &$-24$  & $(3, 15)$ \\

\end{longtable}

  }

\section{Calculation of Hodge numbers in the proof of Theorem~\ref{thm:main-theorem}}
  \label{sec:hodge-number-calculation}
  In the proof of Theorem~\ref{thm:main-theorem}, we calculated the Hodge numbers $h^{p, q}(W, G)$ using the formula
  in Proposition~\ref{thm:generalized-vafas-formula}.
  However, it is a hard and boring task to perform these computations by hand due to many calculations of eigenvalues,
  polynomial expansions, or derivation of conjugacy classes of large order groups.
  We should be helped by a computer algebra system such as Sage~\cite{sagemath}.
  We sketch the algorithm to compute the Poincaré polynomial using Sage in the following:
  \begin{enumerate}
    \item Construct $G$ from the generators.
      This can be accomplished by the Sage function \verb|MatrixGroup|.
      \verb|MatrixGroup| is a function which constructs the group with finitely many generators.
      For group operations used here such as \verb|MatrixGroup|, Sage internally uses GAP~\cite{GAP4}.
      GAP seems to fail (or, at least, take an impractically long time) to construct a matrix group if $G$ contains an element
      whose order is large to a certain extent.
      Fortunately, we can avoid this difficulty for the groups listed in
      Table~\ref{table:quintic-landau-ginzburg-orbifold} by constructing $G'$ generated by the generators of $G$ except $J$.
      Then, we can get representatives of conjugacy classes of $G$ using the fact that
      \begin{equation*}
        S = \set{J^{i}g \in G | 0 \leq i \leq 4, g \in S'}
      \end{equation*}
      where $S'$ is a set of representatives of the conjugacy classes of $G'$
      and the centralizer
      \begin{equation*}
        C_{G}(J^{i}g) = \set{J^{i}h \in G | 0 \leq i \leq 4, h \in C_{G'}(g)}
      \end{equation*}
      for each element in $S$.
      For example, the 32${}^{\text{nd}}$ and 61${}^{\text{st}}$ group in Table~\ref{table:quintic-landau-ginzburg-orbifold} may need
      such a roundabout way.
    \item Determine a set $S$ of representatives of conjugacy classes of $G$.
      We can do this by the method \verb|conjugacy_class_representatives|.
    \item Repeat the following steps for each $g$ in $S$.
      This step may fail for the 91${}^{\text{st}}$ group in Table~\ref{table:quintic-landau-ginzburg-orbifold}.
      This group $G$ has order $37500$ and $77$ conjugacy classes.
      The problem arises during the calculation for the conjugacy class corresponding to the identity matrix whose
      centralizer is the whole $G$.
      However, the Poincaré polynomial of this component can be calculated by hand.
      Indeed, we see
      \begin{equation*}
        \mathscr{H}_{W, G, E_{n}} \simeq \C {\cdot} 1 \oplus \C x_{1} x_{2} x_{3} x_{4} x_{5} \oplus
        \C x_{1}^{2} x_{2}^{2} x_{3}^{2} x_{4}^{2} x_{5}^{2} \oplus \C x_{1}^{3} x_{2}^{3} x_{3}^{3} x_{4}^{3} x_{5}^{3}
      \end{equation*}
      and
      \begin{equation*}
        P_{E_{n}}(W, G; u, v) = u^{3}v^{3} + u^{2}v^{2} + u v + 1.
      \end{equation*}
      \begin{enumerate}
        \item Compute the centralizer $C_{G}(g)$ and the fixed subspace $V^{g}$.
        \item Calculate the eigenvalues $\lambda_{i}$ of the restricted matrix of $h \in C_{G}(g)$ to $V^{g}$.
          Since we are considering the homogeneous case, all eigenvalues correspond to the charge $1/5$.
        \item Take the summation of
          \begin{equation*}
            \prod_{i = 1}^{n_{g}} \frac{\lambda_{i} - (u v)^{4/5}}{1 - \lambda_{i}(u v)^{1/5}}
          \end{equation*}
          over the elements of $C_{G}(g)$.
          However, Sage cannot handle the non-integral exponents.
          We should multiply $5$ to all exponents of $u$ and $v$ to avoid this difficulty.
          As a result, a monomial corresponding to $h^{p, q}(W, G)$ has the form $h^{p, q}(W, G) u^{5p} v^{5q}$,
          which means that our Poincaré polynomial here will be a polynomial like
          \begin{align*}
            & u^{15}v^{15} + h^{1,1}(W, G)u^{10}v^{10} \\
            & + u^{15} + h^{2,1}(W, G)u^{10}v^{5} + h^{2,1}(W, G)u^{5}v^{10} + v^{15} \\
            & + h^{1,1}(W, G)u^{5}v^{5} + 1.
          \end{align*}
        \item  Finally, return the above result multiplying
          \begin{equation*}
            u^{\age(g) - (5 - n_{g})/5}v^{\age(g^{-1})-(5 - n_{g})/5} / \# C_{G}(g).
          \end{equation*}
      \end{enumerate}
    \item Take the summation of all the results in the previous step, which gives the Poincaré polynomial $P(W, G; u, v)$.
  \end{enumerate}

\nocite{*}
\bibliographystyle{abbrv}
\bibliography{paper}

\end{document}